\newtheorem{defn}{Definition}
\newtheorem{thm}{Theorem}
\newtheorem{lem}{Lemma}
\newtheorem{prop}{Proposition}
\newtheorem{cor}{Corollary}
\newtheorem{rem}{Remark}
\newtheorem{exam}{Example}
\newtheorem{assump}{Assumption}
\newtheorem{prob}{Problem}
 \global\long\def\Id{\mathrm{Id}}
 \global\long\def\G{\mathcal{G}}
 \global\long\def\E{\mathcal{E}}
 \global\long\def\V{\mathbb{V}}
 \global\long\def\EE{\mathbb{E}}
 \global\long\def\R{\mathbb{R}}
 \global\long\def\IM{\mathrm{IM}}
 \global\long\def\ker{\mathrm{Ker}}
 \global\long\def\Proj{\mathrm{Proj}}
\definecolor{forestgreen}{rgb}{0.13, 0.55, 0.13}
\definecolor{orange}{rgb}{1,0.49,0}
\begin{document}

\title{Analysis and Synthesis of MIMO Multi-Agent Systems Using Network Optimization}
\author{Miel Sharf and Daniel Zelazo
\thanks{M. Sharf and D. Zelazo are with the Faculty of Aerospace Engineering, Technion-Israel Institute of Technology, Haifa, Israel. This work was supported by the German-Israeli Foundation for Scientific Research and Development. 
    {\tt\small msharf@tx.technion.ac.il, dzelazo@technion.ac.il}}
}

\maketitle
\begin{abstract}
This work studies analysis and synthesis problems for diffusively coupled multi-agent systems.  We focus on networks comprised of multi-input multi-output nonlinear systems that posses a property we term \emph{maximal equilibrium-independent cyclically monotone passivity} (MEICMP), which is an extension of recent passivity results.  \if(0) Following the same thread as \cite{SISO_Paper} which focused only on SISO systems, w\fi We demonstrate that networks comprised of MEICMP systems are related to a pair of dual network optimization problems.  In particular, we show that the steady-state behavior of the multi-agent system correspond to the minimizers of appropriately defined network optimization problems.  This optimization perspective leads to a synthesis procedure for designing the network controllers to achieve a desired output.  We provide detailed examples of networked systems satisfying these properties and demonstrate the results for a network of damped planar oscillators.
\end{abstract}

\section{Introduction and Motivation}


Multi-agent systems have been extensively studied in recent years, mainly due to their applications in various fields in the sciences and engineering, e.g. robotics, neural networks, and power grids \cite{Robotics_Example,Neural_Network_Example,Power_Grid_Example}. The study of graphs and their algebraic representations have emerged as an important tool in the modeling and analysis of these systems \cite{Mesbahi_Egerstedt}. When the agents are considered as dynamical systems, then the notion of passivity theory brings a powerful framework to analyze the dynamic behavior of these interconnected systems.   
%
Passivity theory enables an analysis of the networked system that decouples the dynamics of the agents in the ensemble, the structure of the information exchange network, and the protocols used to couple interacting agents \textcolor{black}{\cite{Arcak_Book}}. 
Passivity for multi-agent systems was first pursued in \cite{Arcak}, where it was used to study group coordination problems. Several variants of passivity theory were used in various contexts like coordinated control of robotic systems \cite{Chopra_Spong}, synchronization problems \cite{Scardovi_Arcak_Sontag,Stan_Sepulchre}, port-Hamiltonian systems on graphs \cite{Port_Hamiltonian_Systems_On_Graphs}, and distributed optimization \cite{Tang_Hong_Yi}. 

One important variant of passivity particularly useful for the analysis of multi-agent systems is \emph{equilibrium-independent passivity} (EIP), introduced in \cite{Linear_System_Transfer_Function}.  For EIP systems, passivity is verified with respect to any steady-state input-output pair, allowing one to show convergence results without specifying the limit beforehand \textcolor{black}{\cite{SimpsonPorco}}. A generalization of this result known as \emph{maximal monotone equilibrium-independent passivity} (MEIP) was introduced in \cite{SISO_Paper} for SISO systems, allowing to prove convergence using energy methods for a much wider class of systems, including, for example, integrators, that are not EIP.  

These passivity extensions have proved very powerful in the analysis of networked systems.  Of interest to this work, \cite{SISO_Paper} used the MEIP notion to establish an equivalence between the steady-state behavior of networked systems and the solution of a pair of dual \emph{network optimization} problems.  Thus, the analysis of networks comprised of MEIP passive agents and controllers could be accomplished by studying an associated static optimization problem.  These results were recently extended in \cite{LCSS_Paper} to provide a synthesis procedure for the interaction protocols between agents to enforce a desired relative state configuration between the agents. In \cite{Jain2018a}, an equivalence between feedback passivation of passivity short systems was made with a regularization of the corresponding network optimization problems. 


The main shortcoming of these works building upon MEIP systems is that they are only applicable to single-input single-output (SISO) agents; this is a consequence of the monotonicity requirement of the steady-state input-output relations that cannot be easily generalized to systems with more than one input or output. This shortcoming motivates the current work, where we aim to extend the notion of MEIP passivity for both the analysis and synthesis of networked systems comprised of multi-input multi-output (MIMO) systems.  

The main analytic tool required to study MIMO systems in this context is the notion of \emph{cyclically monotone} (CM) relations, originally introduced in \cite{Rockafeller}.  Cyclically monotone relations provide the correct generalization of monotonicity of scalar functions to vector functions.  The key result due to \cite{Rockafeller} shows that CM relations are contained in the sub-gradient of a convex function.  With this tool in hand, we are able to extend the SISO results from \cite{SISO_Paper} to square MIMO systems.    
The main contributions of this work can now be stated as follows:
\begin{itemize}

\item \textcolor{black}{We develop a MIMO counterpart of monotone equilibrium independent passivity using the notion of \emph{cyclically monotone relations}, which we term \emph{maximal equilibrium-independent cyclically monotone passivity} (MEICMP).}

%
%
\item We show that a diffusively coupled network comprised of MIMO systems that are (output-strictly) MEICMP with  controllers that are also MEICMP converge to an output agreement steady-state. 
Moreover, we show that the steady-states of the system are the optimal solutions of a dual pair of network optimization problems. 
 
\item We propose a synthesis procedure for designing network controllers assuring global asymptotic convergence to a desired output.  We present conditions for when such a synthesis is feasible.  If a synthesis is not feasible, we present a practically-justifiable method for plant augmentation assuring the synthesis problem can be solved for any desired steady-state output.
\end{itemize}
%
\noindent We also provide numerous examples of systems that can be classified as MEICMP including convex-gradient dynamical systems with oscillatory terms and damped oscillators.  

The remainder of the paper is organized as follows. 
Section \ref{sec.CoopOverDiffusive} reviews the network model and results on passivity in multi-agent coordination.  In Section \ref{sec.CM_And_Use}, we present the main results in three subsections. \textcolor{black}{The first studies steady-states of the closed loop system. The second introduces the notion of cyclically monotone relations and shows how they can be understood via network optimization problems. 
The third shows that if the agents and the controllers are all assumed to be MEICMP, then the closed-loop system converges to the steady-states}. The remaining subsections provide analysis results for networks comprised of MEICMP systems and their relation to a class of dual network optimization problems. Section \ref{sec:Synthesis} deals with the synthesis problem, providing a characterization for solvable cases and a corresponding synthesis procedure.
Section \ref{sec.ConvGradSystems} presents examples of systems with cyclically monotone input-output relations. 


\noindent\paragraph*{Preliminaries} 
A graph $\mathcal{G}=(\mathbb{V},\mathbb{E})$ consists of a node set $\mathbb{V}$ and edge set $\mathbb{E}$. Each edge $k$ consists of two vertices $i,j\in\mathbb{V}$, and we orient it arbitrarily, say from $i$ to $j$; we write $k=(i,j)$ in this case. We define the incidence matrix $E$ of $\mathcal{G}$ as a $|\mathbb{V}|\times|\mathbb{E}|$ matrix such that for any edge $k=(i,j)$, $E_{ik}=-1,\ E_{jk}=1$ and all other entries in $k$'s column are zero. Given some integer $d$, \textcolor{black}{thought of as the input- and output- dimensions of the dynamical system of each of the agents, }we define the incidence operator as $\mathcal{E}=E\otimes \Id_d$, where \textcolor{black}{$\Id_d$ is the identity operator $\mathbb{R}^d\to\mathbb{R}^d$} and $\otimes$ is the Kronecker product. It is important to note that the null-space of $\E$ consists of all vectors of the form $[u^T,u^T,\ldots,u^T]^T$. 
Also, if $f:\mathbb{R}^r\to\mathbb{R}$ is a convex function, its Legendre transform is defined as $f^\star:\mathbb{R}^r\to\mathbb{R}$ by $f^\star(y)=-\inf_u \{f(u) - y^Tu\}$ \cite{Convex_Optimization}.
Furthermore, consider the indicator function for the set $\mathcal{C}$, denoted $I_{\mathcal{C}}$, defined by $I_{\mathcal{C}}(x) = 0$ whenever $x \in \mathcal{C}$, and $I_{\mathcal{C}}(x) = \infty$ otherwise.
Of particular interest is when the set $\mathcal{C}=\{0\}$, and we denote the indicator function in that case as $I_0(x)$.
For a map $T$, we denote its image by $\IM(T)$. If $T$ is a linear map, the kernel of $T$ is denoted by $\ker (T)$.
  From now on, we will use italicized letters (e.g., $\mathit{y_i(t)}$ or $\mathit{y_i}$) to denote time-dependent signals, and normal font letters (e.g. $\mathrm{y_i}$) to denote constant vectors.
\textcolor{black}{Finally, for sets $A,B \subset\R^N$, we define $A+B=\{x\in \R^N|x=a+b,a\in A, b\in B\}$.} 

\textcolor{black}{
\section{Diffusively Coupled Networks and the\\ Role of Passivity in Cooperative Control} \label{sec.CoopOverDiffusive}
In this section, we introduce the network dynamic model used in this work and present an overview of the role of passivity in cooperative control.
\subsection{The Diffusively Coupled Network Model} 
}We consider a population of agents that interact over a network, described by the graph $\mathcal{G} = (\mathbb{V},\mathbb{E})$. The agents are represented by the vertices $\mathbb{V}$, and pairs of interacting agents are represented by edges $\mathbb{E}$. Each specific edge contains information about the coupling (i.e., the controllers), which are allowed to be dynamic. We assume a \emph{diffusive coupling} structure, where the input to the edge controllers are the difference between the output of the adjacent agents, and the control input of each agent is the (directed) sum of the edge controller outputs.

Each agent in the network is modeled as a square multiple-input multiple-output dynamic system of the form
\begin{align} \label{eq:NodeSystemsODE}
\Sigma_i : 
\begin{cases}
\dot{x}_i(t) = f_i(x_i(t),u_i(t),\mathrm{w}_i) \\
y_i(t) = h_i(x_i(t),u_i(t),\mathrm{w}_i) 
\end{cases}
i\in\mathbb{V},
\end{align}
where $x_i(t) \in \mathbb{R}^{p_i}$ is the state, $u_i(t)\in\mathbb{R}^d$ the input, $y_i(t)\in\mathbb{R}^d$ the output, and $\mathrm{w}_i$ a constant exogenous input. Note that each agent need not have the same state dimension, but we require all agents have the same number of inputs and outputs ($d$).  Let $u(t)=[u_1(t)^T,\dots,u_{|\mathbb{V}|}(t)^T]^T$ and $y(t)=[y_1(t)^T,\ldots,y_{|\mathbb{V}|}(t)^T]^T$ be the concatenation of the input and output vectors. Similarly, $x(t)\in\mathbb{R}^{\sum_{i=1}^{|\mathbb{V}|}p_i}$ is the stacked state vector, and $\mathrm{w}$ the stacked exogenous input.

\begin{figure} [!t] 
    \centering
    \includegraphics[scale=0.5]{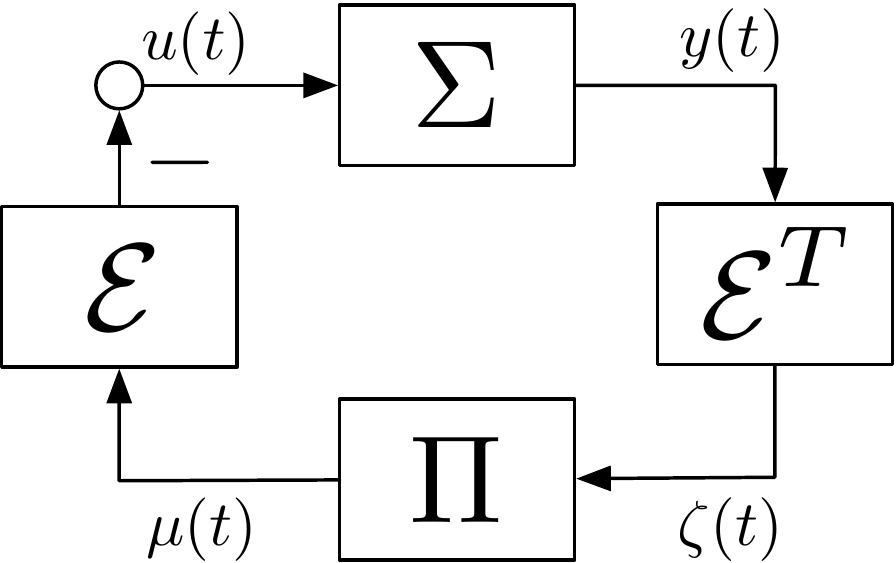}
    \caption{Block-diagram of the network system $(\Sigma, \Pi, \mathcal{G})$.}
    \label{ClosedLoop}
    \vspace{-15pt}
\end{figure}

The agents are diffusively coupled over the network also by a dynamic system that we consider as the network controllers.  For the edge $e=(i,j)$, we denote the difference between the output of the adjacent nodes as $\zeta_e(t) = y_j(t) - y_i(t)$.  The stacked vector $\zeta(t)$ can be compactly expressed using the incidence operator of the graph as $\zeta(t)=\mathcal{E}^Ty(t)$.
%
These, in turn, drive the edge controllers described by the dynamics
\begin{equation} \label{eq:EdgeSystemsODE}
\Pi_e : 
\begin{cases}
\dot{\eta}_e(t) = \phi_e(\eta_e(t),\zeta_e(t)), \\
\mu_e(t) = \psi_e(\eta_e(t),\zeta_e(t))
\end{cases}
 e\in\mathbb{E}.
\end{equation}
The output of these controllers will yield an input to the node dynamical systems as $u(t)=-\mathcal{E}\mu(t)$, with $\mu(t)$ the stacked vector of controller outputs. We denote the complete network system by the triple $(\Sigma, \Pi, \mathcal{G})$, where $\Sigma$ and $\Pi$ are the parallel interconnection of the agent and controller systems, and $\mathcal{G}$ the underlying network; see Figure \ref{ClosedLoop}.  

\textcolor{black}{
\subsection {The Role of Passivity in Cooperative Control}
Passivity theory has taken an outstanding role in the analysis of cooperative control systems, and in particular those with the diffusive coupling structure of Figure \ref{ClosedLoop}. We dedicate this section to consider a few variants of passivity used to prove various analysis results for multi-agent systems. The main advantage of using passivity theory is that it allows to decouple the system into three different layers - namely the agent dynamics, the coupling dynamics, and the network connecting the two. This concept is clearly seen in the following theorem:}
%
\begin{thm}[\cite{SISO_Paper}] \label{ConvergencePassivity}
Consider the network system $(\Sigma,\Pi,\mathcal{G})$ comprised of agents and controllers. Suppose that there are constant vectors $\mathrm{u_i,y_i,\zeta_e}$ and $\mathrm{\mu_e}$ such that
\begin{itemize}
\item[i)] the systems $\Sigma_i$ are output strictly-passive with respect to $\mathrm{u_i}$ and $\mathrm{y_i}$;
\item[ii)] the systems $\Pi_e$ are passive with respect to $\mathrm{\zeta_e}$ and $\mathrm{\mu_e}$;
\item[iii)] the stacked vectors $\mathrm{u,y,\zeta}$ and $\mathrm{\mu}$ satisfy $\mathrm{u}=-\mathcal{E}\mathrm{\mu}$ and $\mathrm{\zeta} = \mathcal{E}^T\mathrm{y}$.
\end{itemize}
Then the output vector $\mathit{y}(t)$ converges to $\mathrm{y}$ as $t\to\infty$.
\end{thm}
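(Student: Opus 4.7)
The plan is to construct a composite Lyapunov-like function from the storage functions guaranteed by the passivity hypotheses, and then exploit the fact that the incidence-based coupling makes the cross-terms cancel exactly, leaving a definite-sign derivative that forces $\mathit{y}(t)\to\mathrm{y}$.

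Specifically, for each agent $\Sigma_i$, output strict passivity with respect to $(\mathrm{u_i},\mathrm{y_i})$ provides a nonnegative storage function $S_i(x_i)$ and $\rho_i>0$ such that along trajectories
\[
\dot S_i \;\le\; (u_i(t)-\mathrm{u_i})^{T}(y_i(t)-\mathrm{y_i}) \;-\; \rho_i\|y_i(t)-\mathrm{y_i}\|^{2}.
\]
Similarly, passivity of each controller $\Pi_e$ with respect to $(\mathrm{\zeta_e},\mathrm{\mu_e})$ provides a storage function $W_e(\eta_e)$ with
\[
\dot W_e \;\le\; (\zeta_e(t)-\mathrm{\zeta_e})^{T}(\mu_e(t)-\mathrm{\mu_e}).
\]
Define $V(t)=\sum_{i\in\mathbb{V}}S_i(x_i(t))+\sum_{e\in\mathbb{E}}W_e(\eta_e(t))$. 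Summing the two inequalities above yields
\[
\dot V \;\le\; (u(t)-\mathrm{u})^{T}(y(t)-\mathrm{y})+(\zeta(t)-\mathrm{\zeta})^{T}(\mu(t)-\mathrm{\mu})-\sum_{i}\rho_i\|y_i(t)-\mathrm{y_i}\|^{2}.
\]

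The crucial step is to apply the interconnection identities. Since $u(t)=-\mathcal{E}\mu(t)$ and $\mathrm{u}=-\mathcal{E}\mathrm{\mu}$, we have $u(t)-\mathrm{u}=-\mathcal{E}(\mu(t)-\mathrm{\mu})$; similarly $\zeta(t)-\mathrm{\zeta}=\mathcal{E}^{T}(y(t)-\mathrm{y})$. Substituting, the two cross-terms become
\[
-(\mu(t)-\mathrm{\mu})^{T}\mathcal{E}^{T}(y(t)-\mathrm{y})+(y(t)-\mathrm{y})^{T}\mathcal{E}(\mu(t)-\mathrm{\mu})=0,
\]
so only the dissipation term survives, giving $\dot V\le -\rho\|y(t)-\mathrm{y}\|^{2}$ with $\rho=\min_i\rho_i>0$.

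To conclude convergence of $\mathit{y}(t)$ from the resulting dissipation inequality, I would integrate $\dot V$ to show that $V$ is bounded and that $y-\mathrm{y}\in L^{2}$, then invoke Barbalat's lemma (after observing that $y(t)-\mathrm{y}$ is uniformly continuous, which follows from boundedness of trajectories and smoothness of $h_i$); alternatively, invariance arguments of LaSalle type on appropriate sublevel sets of $V$ also deliver $y(t)\to\mathrm{y}$. The main obstacle I expect is precisely this last step: without additional regularity (e.g., detectability of the output, or boundedness of $u,\zeta$ along trajectories) the pointwise convergence of $y(t)$ to $\mathrm{y}$ does not follow from $y-\mathrm{y}\in L^{2}$ alone, so the technical work lies in verifying uniform continuity of $y(t)-\mathrm{y}$ (or equivalently establishing forward completeness and bounded trajectories) so that Barbalat applies. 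The algebraic cancellation of cross-terms via the incidence operator, however, is automatic once the Kirchhoff-type interconnection $(u=-\mathcal{E}\mu,\ \zeta=\mathcal{E}^{T}y)$ is written in variational form about the claimed steady state.
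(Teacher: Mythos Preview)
The paper does not actually prove Theorem~\ref{ConvergencePassivity}; it is quoted as a known result from \cite{SISO_Paper} and then invoked as a black box (e.g., in the proof of Theorem~\ref{thm.Convergence}). Your proposal is nonetheless the standard argument underlying that cited result: summing the agent and controller storage functions, using the interconnection identities $u=-\mathcal{E}\mu$ and $\zeta=\mathcal{E}^T y$ to cancel the cross-terms, and then applying Barbalat/LaSalle to the resulting dissipation inequality $\dot V\le -\rho\|y(t)-\mathrm{y}\|^2$. This is correct and exactly in the spirit of the original proof; the technical caveat you flag about uniform continuity (needed for Barbalat) is the right place to be careful, but is routinely handled under the standing smoothness and boundedness assumptions.
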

\noindent\textcolor{black}{Indeed, the first condition involves the agent dynanics, the second the controllers, and the third the underlying network}.


\textcolor{black}{The first paper to fully embrace passivity theory to analyse cooperative control problems was \cite{Arcak}. This led to many variants of passivity in the literature proven to be useful for the analysis of cooperative control problems.} 
\textcolor{black}{
\emph{Incremental passivity} (IP), introduced in \cite{IncrementalPassivity}, allows one to consider the passivity property with respect to certain trajectories, rather than fixed equilibria.  
Indeed, incremental passivity was used in \cite{Scardovi_Arcak_Sontag,Stan_Sepulchre} to prove various synchronization and analysis results in multi-agent system. However, IP is restrictive, as it demands the passivation inequality to hold for any two trajectories.
}

\textcolor{black}
{Other variants of passivity focus on the collection of all equilibria of a system.  In this direction, the notion of steady-state input-output maps is useful.  In the following, we focus on dynamical systems of the form
\begin{align} \label{eq:dynamics}
\Sigma : 
\begin{cases}
\dot{x}(t) = f(x(t),u(t)) \\
y(t) = h(x(t),u(t)) 
\end{cases}.
\end{align}
%
%
\begin{defn}
Consider the dynamical system \eqref{eq:dynamics} with input $u \in \mathcal{U}$ and output $y \in \mathcal{Y}$. The \emph{steady-state input-output set} associated with \eqref{eq:dynamics} is the set $k_y \subset \mathcal{U}\times \mathcal{Y}$ consisting of steady-state input-output pairs $(\mathrm u,\mathrm y)$ of the system.  
\end{defn}
}

\textcolor{black}
{With this definition, we now introduce the next variant of passivity termed \emph{equilibrium-independent passivity} (EIP) \cite{Linear_System_Transfer_Function}. A key feature of EIP is the assumption that for any steady-state input $\mathrm{u}$ there is exactly one steady-state output $\mathrm{y}$.  This implies that the steady-state output $\mathrm y$ can be expressed as a continuous function of the steady-state input $\mathrm u$.  Thus, with a slight abuse of notation we can consider the the set $k_y$ as a \emph{function}  $k_y:\mathrm u \mapsto \mathrm y$, i.e. $\mathrm y = k_y(\mathrm u)$. 
In general, this is less restrictive than IP, and allows to prove analysis results for MIMO systems. However, there are IP systems which are not EIP. The epitome of these kind of systems is the simple integrator, which can be verified to be IP, but not EIP. The steady-state input $\mathrm u=0$ has multiple different steady-state outputs (depending on the initial condition of the system), and thus the input-output map is no longer a function.}

\textcolor{black}
{The last variant of passivity we review is \emph{maximal equilibrium-independent passivity} (MEIP) \cite{SISO_Paper}. It is a variant of EIP that attempts to remedy the exclusiveness of the simple integrator and similar systems. However, it is only defined in the case of SISO systems, as it relies on the notion of monotone relations:
\begin{defn}[\cite{SISO_Paper}]
Consider a relation $R\subseteq\R\times \R$. We say that $R$ is a monotone relation if for every two elements $(\mathrm u_1,\mathrm y_1)$ and $(\mathrm u_2,\mathrm y_2)$, we have that $(\mathrm u_2 - \mathrm u_1)(\mathrm y_2 - \mathrm y_1) \ge 0$. We say that $R$ is maximally monotone if it is monotone and is not contained in a larger monotone relation.
\end{defn}
In other words, increasing the first element $\mathrm u$ implies that the second element $\mathrm y$ cannot decrease. We now present the definition of MEIP.
\begin{defn} [\cite{SISO_Paper}]
The SISO system \eqref{eq:dynamics} is said to be \emph{maximal equilibrium-independent passive} (MEIP) if:
\begin{itemize}
\item[i)] The system is passive with respect to any steady-state pair $(\mathrm u,\mathrm y)$ it has.
\item[ii)] The collection $k_y$ of all steady-state input-output pairs $(\mathrm u,\mathrm y)$ is maximally monotone.
\end{itemize}
\end{defn}
This is indeed a generalization of EIP, as the function $k_y$ of an EIP system is monotone ascending \cite{Linear_System_Transfer_Function}. It can also be shown that the simple SISO integrator is MEIP. However, the problem of finding a MIMO analogue of MEIP, or a variant of EIP that will include marginally-stable systems like the simple integrator, has not been addressed in the literature.}

\textcolor{black}{
In the following section, we present a generalization of MEIP and of EIP to MIMO systems which include integrators and other marginally-stable systems. The key notion that we'll use is one possible generalization of monotonic relations from subsets of $\R\times \R$ to subsets of $\R^d\times \R^d$. This generalization is called cyclic monotonicity, and was first considered in \cite{Rockafeller}.
}

\if(0)
Towards a more general theory of these diffusively coupled network systems, there have been many extensions to the result in Theorem \ref{ConvergencePassivity} that rely on certain refinements of passivity theory \cite{Linear_System_Transfer_Function,Wang_Bao,Li_Zhao,Summers_Arack_Packard,Shiromoto_Manchester,Burger_DePersis}.  Of interest to this work is the notion of \emph{maximal equilibrium independent passivity} (MEIP), which explores a connection between \emph{monotone relations} and passivity; we briefly review these concepts here. 

\begin{defn}[Monotone Relations]
Let $R$ be a subset of $\R\times\R$. 
We say that $R$ is \emph{monotone} if for any two pairs $(u_1,y_1),(u_2,y_2)\in R$, if $u_1 \le u_2$ then $y_1 \le y_2$. 
We say that $R$ is \emph{strictly monotone} if $u_1 < y_1$ implies $u_2 < y_2$.
We say that $R$ is a \emph{maximally monotone} relation if it is monotone, and it is not contained in a larger monotone relation.
\end{defn}

It is well known that monotone relations are gradients of convex functions, 
and that these functions are unique up to an additive constant (see chapter 24 of \cite{Rockafellar1}). \textcolor{black}{We call these functions \emph{integral functions}}. We now define a system-theoretic analogue of this notion.  
\begin{defn}[MEIP \cite{SISO_Paper}]
A SISO system $\Sigma$ is called \emph{maximally monotone equilibrium-independent passive} (MEIP) if
\begin{itemize}
\item[i)] for every steady-state input-output pair $(\mathrm{u},\mathrm{y})$, the system $\Sigma$ is (output strictly) passive with respect to $\mathrm{u}$ and $\mathrm{y}$;
\item[ii)] the steady-state input-output relation, i.e., the set of all steady-state input-output pairs, is a maximally monotone relation.
\end{itemize}
\end{defn}

The main result of \cite{SISO_Paper} shows that the steady-state behavior of a network system comprised of MEIP systems converges to the solutions of certain static optimization problems.  These optimization problems belong to the class of \emph{network optimization} \cite{Rockafellar1,Bertsekas} problems, which we briefly review here.

Network optimization is generally concerned with finding optimal solutions to problems defined over network structures.  This includes the celebrated shortest path problems and max-flow/min-cut problems.  The basic structure of a network optimization problem includes a graph with optimization variables and cost functions associated to the nodes and edges, along with constraints involving the structure of the graph.  

The basic terminology in network optimization is now presented.  A \emph{flow} on a network is a vector $\mu = [\mu_1,\mu_2,...,\mu_{|\mathbb{E}|}]^T$. The element $\mu_e$ can be thought of as the flux through the edge $e$. The net (directed) flow through a node is the sum of the fluxes along the adjacent edges.  Thus, each node is associated with a variable $\mathrm{u}_i$ termed the \emph{divergence} of the network at the node $i$.  The relationship between flow and divergence is captured by the constraint $\mathrm{u}+{E}\mu = 0$.  This is the well known Kirchoff's current law when flows are interpreted as currents through an electrical network.

A dual interpretation for the network variables can also be made. A vector $\mathrm{y}\in\mathbb{R}^{|\mathbb{V}|}$ assigned to the nodes is termed a \emph{potential} of the network. To each edge $e=(i,j)$ one can then associate a potential difference $\zeta_e = \mathrm{y}_j - \mathrm{y}_i$, which is also called a \emph{tension}. The tension vector $\zeta\in\mathbb{R}^{|\mathbb{E}|}$ can also be expressed by $\zeta =E^T\mathrm{y}$. The flow, divergence, potential, and tension variables are all related via the incidence matrix of the graph and the corresponding \emph{conversion formula}, $\mu^T\zeta = -\mathrm{y}^T\mathrm{u}$ \cite{Rockafellar1}. 

The two canonical problems in network optimization are the \emph{optimal flow problem} (OFP) and the \emph{optimal potential problem} (OPP).  The OFP aims to optimize the flow and divergence in a network subject to the flow conservation constraint.  The OPP similarly optimizes the tensions and potentials in the network.  When the object functions are convex, the two problems turn out to be dual to each other. These problems are summarized below.  Here, we denote by $K(\mathrm{u})$ the divergence cost function and $\Gamma^\star(\mu)$ the flow cost function.  The function $K^\star(\cdot)$ is the convex conjugate of $K(\cdot)$, and $\Gamma^\star(\cdot)$ the convex conjugate of $\Gamma(\cdot)$.

\begin{center}
\begin{tabular}{ c||c }
 \textbf{Optimal Potential Problem}  & \textbf{Optimal Flow Problem}   \\
 (OPP) & (OFP)  \\\hline
 $ \begin{array}{cl} \underset{\mathrm{y,\zeta}}{\min} &K^\star(\mathrm{y}) + \Gamma(\zeta)\\
s.t.&{\E}^T\mathrm{y} = \zeta 
\end{array} $&  $ \begin{array}{cl}\underset{\mathrm{u,\mu}}{\min}& K(\mathrm{u}) + \Gamma^\star(\mu) \\
s.t. &\mathrm{u} = -{\E}\mu.
\end{array} $ 
\end{tabular}
\end{center}



We are now prepared to present the main result from \cite{SISO_Paper}.

\begin{thm}[\cite{SISO_Paper}]\label{thm.SISO}
Consider the network system $(\Sigma,\Pi,\mathcal{G})$ comprised of SISO agents and controllers (i.e., $d=1$).  Assume all the agent dynamics $\Sigma_i$ for $i=1,\ldots,|\mathbb{V}|$ are (output-strictly) MEIP, and that all couplings $\Pi_e$ for $e=1,\ldots,|\mathbb{E}|$ are MEIP.
Let $K_i$ and $\Gamma_e$ be integral functions of the corresponding input-output relations. Then the signals $u(t),y(t),\mu(t),\zeta(t)$ converge to constant signals $\mathrm{u,y,\mu,\zeta}$, which are minimizers of the optimal potential and optimal flow problems.

Theorem \ref{thm.SISO} thus shows that it is of interest to study the optimal solutions to (OPP) and (OFP) when analyzing these network systems.  In this direction, we present a useful lemma showing an optimality condition for (OPP) and (OFP) that will be utilized in the sequel.

\begin{lem} \label{lem:ZeroEquationLemma}
Consider the problems (OPP) and (OFP) and assume that $K$ and $\Gamma$ are any convex functions.
Let $p_0=(\mathrm{u_0},\mu_0)$ and $d_0=(\mathrm{y_0},\zeta_0)$ be feasible solutions to ({OFP}) and ({OPP}) respectively. If $K(\mathrm{u_0}) + K^\star(\mathrm{y_0}) + \Gamma(\zeta_0)+\Gamma^\star(\mu_0)= 0$, then $p_0$ and $d_0$ are dual optimal solutions.
\end{lem}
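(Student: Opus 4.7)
The plan is to recognize this as a standard "zero duality gap" certificate: I will establish weak duality between (OFP) and (OPP) with gap bounded below by $0$, then show the hypothesis forces this bound to be attained at $(p_0,d_0)$, which automatically makes both points optimal.

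First I would invoke the Fenchel--Young inequality, which follows directly from the paper's definition $f^\star(y) = \sup_u\{y^T u - f(u)\}$: for any $x,y$ and any convex $f$ one has $f(x)+f^\star(y) \geq x^T y$. Applying this to the pairs $(\mathrm{u}_0,\mathrm{y}_0)$ and $(\zeta_0,\mu_0)$ yields
\begin{equation*}
K(\mathrm{u}_0)+K^\star(\mathrm{y}_0) \geq \mathrm{u}_0^T\mathrm{y}_0, \qquad \Gamma(\zeta_0)+\Gamma^\star(\mu_0) \geq \zeta_0^T\mu_0.
\end{equation*}
Next I would use the feasibility constraints $\mathrm{u}_0 = -\E\mu_0$ and $\zeta_0 = \E^T\mathrm{y}_0$ to derive the conversion identity $\mu_0^T\zeta_0 = \mu_0^T\E^T\mathrm{y}_0 = -\mathrm{u}_0^T\mathrm{y}_0$, and thus conclude the weak duality bound
\begin{equation*}
K(\mathrm{u}_0)+K^\star(\mathrm{y}_0)+\Gamma(\zeta_0)+\Gamma^\star(\mu_0) \geq 0,
\end{equation*}
with the right-hand side being the sum of the two Fenchel--Young gaps (and independent of the particular feasible points chosen).

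The crucial observation now is that this same bound holds, by the identical argument, for any feasible primal $p=(\mathrm{u},\mu)$ paired with the fixed dual point $d_0=(\mathrm{y}_0,\zeta_0)$. Rearranging,
\begin{equation*}
K(\mathrm{u})+\Gamma^\star(\mu) \geq -K^\star(\mathrm{y}_0)-\Gamma(\zeta_0)
\end{equation*}
for every feasible $p$. Under the hypothesis $K(\mathrm{u}_0)+K^\star(\mathrm{y}_0)+\Gamma(\zeta_0)+\Gamma^\star(\mu_0)=0$, the point $p_0$ attains this lower bound, so $p_0$ minimizes (OFP). The symmetric argument, fixing $p_0$ and varying the dual feasible point, shows $d_0$ minimizes (OPP).

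I do not expect any real obstacle here; the only subtle ingredient is the conversion formula, which the paper has already highlighted, and the Fenchel--Young inequality, which is immediate from the stated definition of the Legendre transform. The proof is essentially a weak-duality-plus-zero-gap argument packaged for the specific form of (OPP) and (OFP).
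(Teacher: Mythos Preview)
Your proof is correct and follows essentially the same route as the paper's: both combine the Fenchel--Young inequality with the conversion identity $\mathrm{u}^T\mathrm{y}+\mu^T\zeta=0$ to obtain the weak-duality bound $F(\mathrm u,\mu)+F^\star(\mathrm y,\zeta)\ge 0$, and then observe that the hypothesis forces equality at $(p_0,d_0)$, pinning both as minimizers. The only cosmetic difference is that the paper bundles $K+\Gamma^\star$ into a single function $F$ and applies Fenchel--Young once, whereas you apply it to $K$ and $\Gamma$ separately and add; the substance is identical.
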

\begin{proof} 
We first note that if $(\mathrm{u},\mu)$ and $(\mathrm{y},\zeta)$ are any feasible solutions, we have that
\[
\mathrm{u}^T\mathrm{y}+\mu^T\zeta=-\mu^T\E^T\mathrm{y}+ \mu^T\E^T\mathrm{y}=0
\]
Now, we define the function $F(u,\mu)=K(u)+\Gamma^\star(\mu)$ which has a dual $F^\star(y,\zeta)=K^\star(y)+\Gamma(\zeta)$. Note that the assumption is that $F(\mathrm{u_0},\mu_0)+F^\star(\mathrm{y_0},\mu_0) = 0$. By the definition of the Legendre transform, if $(\mathrm{u},\mu)$ and $(\mathrm{y},\zeta)$  are again any feasible solutions, then we have 
\begin{align}\label{eq:LegendreDualityIneq}
F^\star(\mathrm y,\zeta) &= \sup_{(\mathrm{\hat{u}},\hat\mu)} \{{(\mathrm{\hat u},\hat\mu)^T(\mathrm y,\zeta)-F(\mathrm{\hat u},\hat\mu)}\} \\
&\ge (\mathrm{u},\mu)^T(\mathrm y,\zeta)-F(\mathrm{u},\mu) = -F(\mathrm{u},\mu). \nonumber
\end{align}
In particular, we have $F^\star(\mathrm y,\zeta) \ge - F(\mathrm u_0, \mu_0) = F^\star(\mathrm y_0,\zeta_0)$, and similarly $F(\mathrm u,\mu) \ge - F^\star(\mathrm y_0, \zeta_0) = F(\mathrm u_0,\mu_0).$
The duality follows from \eqref{eq:LegendreDualityIneq} and $F(\mathrm{u_0},\mu_0)+F^\star(\mathrm{y_0},\mu_0) = 0$, meaning that the supremum appearing in the definition of $F^\star(\mathrm y_0,\zeta_0)$ is achieved at $(\mathrm u_0,\mu_0)$.
\end{proof}

\begin{cor}\label{cor.feasopt}
If $\mathrm{u_0,y_0}$ are feasible solutions to (OPP) and (OFP) with $\Gamma(\zeta)=I_0(\zeta)$ and they satisfy $K(\mathrm u_0) + K^\star(\mathrm y_0) = 0$, then they are dual optimal solutions.
\end{cor}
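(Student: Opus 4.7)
The plan is to deduce the corollary as a direct specialization of Lemma \ref{lem:ZeroEquationLemma} to the case $\Gamma = I_0$, so the task reduces to verifying that the hypotheses of that lemma are met once we compute the Legendre transform of $I_0$ and exploit what feasibility forces on the edge variables.

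First I would compute $\Gamma^\star$ when $\Gamma = I_0$. Directly from the definition,
\[
\Gamma^\star(\mu) \;=\; -\inf_{\zeta}\{I_0(\zeta) - \mu^T\zeta\} \;=\; \sup_\zeta\{\mu^T\zeta - I_0(\zeta)\},
\]
and since $I_0(\zeta)=\infty$ unless $\zeta = 0$, the supremum is attained at $\zeta = 0$, yielding $\Gamma^\star(\mu) = 0$ for every $\mu$. So the flow cost vanishes identically.

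Next I would unpack feasibility. Since $\mathrm{y_0}$ is feasible for (OPP), there is an associated tension $\zeta_0 = \E^T\mathrm{y_0}$, and finiteness of the objective $K^\star(\mathrm{y_0}) + \Gamma(\zeta_0)$ forces $I_0(\zeta_0) < \infty$, i.e.\ $\zeta_0 = 0$, hence $\Gamma(\zeta_0) = 0$. Similarly, feasibility of $\mathrm{u_0}$ for (OFP) gives some $\mu_0$ with $\mathrm{u_0} = -\E\mu_0$, and by the computation above $\Gamma^\star(\mu_0) = 0$.

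Combining these observations with the hypothesis $K(\mathrm{u_0}) + K^\star(\mathrm{y_0}) = 0$, we obtain
\[
K(\mathrm{u_0}) + K^\star(\mathrm{y_0}) + \Gamma(\zeta_0) + \Gamma^\star(\mu_0) \;=\; 0,
\]
which is precisely the hypothesis of Lemma \ref{lem:ZeroEquationLemma}. Applying that lemma concludes that $(\mathrm{u_0}, \mu_0)$ and $(\mathrm{y_0}, \zeta_0)$ are dual optimal, and since $\mathrm{u_0}$ and $\mathrm{y_0}$ fully determine the respective primal variables (up to any $\mu_0 \in \ker \E + \text{a particular solution}$, which is irrelevant to optimality as $\Gamma^\star \equiv 0$), the claim follows. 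There is no substantive obstacle here; the only subtlety worth being careful about is the bookkeeping that feasibility of $\mathrm{y_0}$ in (OPP) under the indicator cost is genuinely equivalent to $\E^T \mathrm{y_0} = 0$, which is what makes the reduction to Lemma \ref{lem:ZeroEquationLemma} clean.
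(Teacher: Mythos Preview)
Your proposal is correct and follows exactly the approach the paper intends: the corollary is stated immediately after Lemma~\ref{lem:ZeroEquationLemma} without a separate proof, as a direct specialization to $\Gamma = I_0$. Your bookkeeping (computing $\Gamma^\star \equiv 0$ and noting that feasibility under $I_0$ forces $\zeta_0 = 0$) is precisely what is needed to reduce to the lemma.
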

\fi

\section{Cyclically Monotone Relations\\ and Cooperative Control} \label{sec.CM_And_Use}

In \cite{SISO_Paper}, the concept of monotone relations is used to provide convergence results for a network system $(\Sigma, \Pi, \mathcal{G})$ comprised of SISO agents. However, many applications deal with MIMO systems, necessitating a need to extend this work for network systems consisting of MIMO agents. {\color{black}{We begin by considering the steady-states of the system.}} 
{\color{black}{
\subsection{Steady-States and Network Consistency}
Consider a steady-state $(\mathrm{u,y,\zeta,\mu})$ of the closed loop system in Fig. \ref{ClosedLoop}. We know that for every $i=1,...,|\V|$, $(\mathrm{u}_i,\mathrm{y}_i)$ is a steady-state input-output pair of the $i$-th agent $\Sigma_i$. Similarly, for every $e\in \EE$, $(\mathrm{\zeta}_e,\mathrm{\mu}_e)$ is a steady-state input-output pair of the $e$-th controller $\Pi_e$.  The network interconnection between the systems $\Sigma$ and $\Pi$ imposes an additional consistency constraint between these steady-state values. 
This motivates us to consider the steady-state input-output relations for each of the agents and the controllers. 

In this direction, we denote the steady-state input-output relation of the $i$-th agent by $k_i$, and the relation for the $e$-th controller by $\gamma_e$. That is, $k_i \subset \mathbb{R}^d\times\mathbb{R}^d$ and $\gamma_e \subset \mathbb{R}^d\times\mathbb{R}^d$. 
We denote the stacked relation for the agents and controllers as $k$ and $\gamma$, respectively. 

\begin{rem}
Suppose that $k$ is a steady-state input-output relation. We can consider a set-valued map, also denoted by $k$, taking a steady-state input  $\mathrm u$ to the set $k(\mathrm u) = \{\mathrm y:\ (\mathrm{u,y})\in k\}$. Similarly, one can consider the inverse set-valued map taking a steady-state output $\mathrm y$ to the set $k^{-1}(\mathrm y) = \{\mathrm u:\ (\mathrm{u,y})\in k\}$.  Thus, with a slight abuse of notation we refer to $k$ ($\gamma$) as both a relation and a set-valued map.
\end{rem}

\begin{prop} \label{prop.4Requirements}
Let $\mathrm u\in \R^{d|\V|}, \mathrm y \in \R^{d|\V|},\zeta \in \R^{d|\EE|},\mu \in R^{d|\EE|}$ be any four constant vectors. Then $(\mathrm{u,y,\zeta,\mu})$ is a steady-state of the closed-loop system $(\Sigma, \Pi, \G)$ if and only if
\begin{equation}\label{eq.4Requirements}
\begin{gathered}
(\mathrm{u,y})\in k, \quad (\zeta,\mu)\in \gamma, \\
\mathrm{\zeta} = \E^T\mathrm{y}, \quad \mathrm{u} = -\E\mu.
\end{gathered}
\end{equation}
\end{prop}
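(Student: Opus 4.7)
The plan is to prove the equivalence by unpacking the definition of a steady-state for the block diagram in Figure \ref{ClosedLoop} and showing that it decomposes exactly into the four listed conditions.

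First I would handle the forward direction. Assume $(\mathrm{u,y,\zeta,\mu})$ is a steady-state of the closed loop. By definition, each agent $\Sigma_i$ in \eqref{eq:NodeSystemsODE} is driven by the constant input $\mathrm u_i$ and produces the constant output $\mathrm y_i$ with $\dot x_i=0$, so $(\mathrm u_i,\mathrm y_i)$ is by definition in the steady-state input-output set $k_i$; stacking over $i\in\V$ gives $(\mathrm{u,y})\in k$. The identical argument applied to each controller $\Pi_e$ in \eqref{eq:EdgeSystemsODE} driven by $\zeta_e$ with output $\mu_e$ yields $(\zeta,\mu)\in\gamma$. The remaining two equalities are just the algebraic interconnection laws $\zeta(t)=\E^T y(t)$ and $u(t)=-\E\mu(t)$ evaluated at the constant signals, which must hold at every time and in particular in steady-state.

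For the converse, I would take any four vectors satisfying \eqref{eq.4Requirements} and exhibit initial conditions producing a constant closed-loop trajectory. Because $(\mathrm u_i,\mathrm y_i)\in k_i$, there exists, by definition of $k_i$, a state $\mathrm x_i$ with $f_i(\mathrm x_i,\mathrm u_i,\mathrm w_i)=0$ and $\mathrm y_i=h_i(\mathrm x_i,\mathrm u_i,\mathrm w_i)$; likewise for the controllers there exist internal states $\mathrm \eta_e$ realising $(\zeta_e,\mu_e)\in\gamma_e$. Initialising the full system at these states produces constant signals $x(t)\equiv\mathrm x$, $y(t)\equiv\mathrm y$, $\eta(t)\equiv\mathrm\eta$, $\mu(t)\equiv\mathrm\mu$. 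The two interconnection identities $\zeta=\E^T\mathrm y$ and $\mathrm u=-\E\mathrm\mu$ then guarantee that the inputs fed back into $\Sigma$ and $\Pi$ are precisely the constants we chose, so the trajectory is self-consistent and hence a genuine solution of the closed loop. This realises $(\mathrm{u,y,\zeta,\mu})$ as a steady-state in the sense of the block diagram.

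I do not anticipate a real obstacle here: the statement is essentially a bookkeeping exercise that isolates the three decoupled layers (agent dynamics, controller dynamics, network interconnection) which the paper has been emphasising. The only subtlety worth mentioning explicitly is that ``steady-state of the closed-loop system'' must be read as the existence of matching internal states for $\Sigma_i$ and $\Pi_e$ consistent with $(\mathrm{u_i,y_i})\in k_i$ and $(\zeta_e,\mu_e)\in\gamma_e$; this is precisely the content of the definition of the steady-state input-output set $k_y$ given earlier, so no additional hypothesis is required.
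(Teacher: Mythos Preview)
Your proposal is correct and follows essentially the same approach as the paper, which simply states that the result ``follows directly from the interconnection of the network, and from the definitions of $k$ and $\gamma$.'' You have merely unpacked in detail what the paper leaves implicit, so there is no substantive difference in method.
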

\begin{proof}
Follows directly from the interconnection of the network, and from the definitions of $k$ and $\gamma$.
\end{proof}
We wish to manipulate the conditions in \eqref{eq.4Requirements} to reduce the steady-state characterization from a system with four constraints to one.
%
\begin{prop} \label{prop.1Requirement_y}
Let $\mathrm y \in \R^{d|\V|}$ be any vector. Then the following conditions are equivalent:
\begin{itemize}
\item[i)] The zero vector ${\bf 0}$ belongs to the set $k^{-1}(\mathrm y) + \E\gamma(\E^T\mathrm y)$.
\item[i)] There exists vectors $\mathrm u,\zeta,\mu$ such that $(\mathrm{u,y},\zeta,\mu)$ is a steady-state of the closed-loop network $(\Sigma, \Pi, \G)$.
\end{itemize}
\end{prop}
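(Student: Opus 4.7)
The plan is to prove this as a direct corollary of Proposition~\ref{prop.4Requirements}, by unpacking both the set-addition notation $k^{-1}(\mathrm y) + \E\gamma(\E^T\mathrm y)$ and the image of a set under the linear map $\E$. The statement is essentially an equivalent, more compact reformulation of the four steady-state conditions in \eqref{eq.4Requirements}, where we use $\mathrm y$ as the ``pivot'' variable that carries all the information and eliminate $\mathrm u$, $\zeta$, $\mu$ via the remaining equations.

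For the direction (ii)$\Rightarrow$(i), I would assume $(\mathrm{u,y},\zeta,\mu)$ is a steady-state and apply Proposition~\ref{prop.4Requirements}. The condition $\zeta = \E^T\mathrm y$ together with $(\zeta,\mu)\in\gamma$ yields $\mu\in\gamma(\E^T\mathrm y)$, so that $\E\mu\in\E\gamma(\E^T\mathrm y)$. Likewise, $(\mathrm{u,y})\in k$ gives $\mathrm u\in k^{-1}(\mathrm y)$. Finally, the Kirchhoff-type identity $\mathrm u = -\E\mu$ rearranges to ${\bf 0} = \mathrm u + \E\mu$, exhibiting $\bf 0$ as a sum of an element of $k^{-1}(\mathrm y)$ and an element of $\E\gamma(\E^T\mathrm y)$, which is exactly the definition of membership in their Minkowski sum.

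For the converse (i)$\Rightarrow$(ii), suppose ${\bf 0}\in k^{-1}(\mathrm y) + \E\gamma(\E^T\mathrm y)$. By the definition of the Minkowski sum, there exist $\mathrm u\in k^{-1}(\mathrm y)$ and $\nu\in\E\gamma(\E^T\mathrm y)$ with $\mathrm u + \nu = {\bf 0}$. Unpacking the image under $\E$, there exists $\mu\in\gamma(\E^T\mathrm y)$ with $\nu = \E\mu$. Now set $\zeta := \E^T\mathrm y$. I then verify directly all four conditions of \eqref{eq.4Requirements}: $(\mathrm{u,y})\in k$ from $\mathrm u \in k^{-1}(\mathrm y)$, $(\zeta,\mu)\in\gamma$ from $\mu\in\gamma(\zeta)$, the tension equation $\zeta = \E^T\mathrm y$ by construction, and the current equation $\mathrm u = -\E\mu = -\nu$ from $\mathrm u + \nu = {\bf 0}$. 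Proposition~\ref{prop.4Requirements} then yields that $(\mathrm{u,y},\zeta,\mu)$ is a steady-state.

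There is no real obstacle here; the only point that deserves care is the abuse of notation pointed out in the preceding remark, where $k$ and $\gamma$ are treated simultaneously as relations and as set-valued maps, and correspondingly $\E\gamma(\cdot)$ denotes the forward image of the set $\gamma(\cdot)$ under the linear operator $\E$ rather than a functional composition. Once these conventions are made explicit, the argument is a symmetric two-line unpacking in each direction.
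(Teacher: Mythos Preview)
Your proof is correct and follows essentially the same approach as the paper: both directions are proved by unpacking the Minkowski sum and invoking Proposition~\ref{prop.4Requirements}, with the only difference being that you introduce the intermediate variable $\nu=\E\mu$ explicitly while the paper does not. There is nothing to add.
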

\begin{proof}
First, assume the existence of $\mathrm u,\zeta,\mu$. By Proposition \ref{prop.4Requirements}, it follows that
$
\mathrm u \in k^{-1}(\mathrm y),\ \zeta = \E^T\mathrm y,\ \mu\in\gamma(\zeta),\ \text{and } \mathrm u=-\E\mu.
$
Thus,
\[
{\bf 0} = \mathrm u+\E\mu \in k^{-1}(\mathrm y) + \E\gamma(\zeta) = k^{-1}(\mathrm y) + \E\gamma(\E^T\mathrm y).
\]
Conversely, if ${\bf 0}\in k^{-1}(\mathrm y) + \E\gamma(\E^T\mathrm y)$, then we know that there are some $\mathrm u\in k^{-1}(\mathrm y)$ and $\mu \in \gamma(\E^T \mathrm y)$ such that $\mathrm u+\E\mu = {\bf 0}$. Thus, by Proposition \ref{prop.4Requirements}, the 4-tuple $(\mathrm u,\mathrm y, \zeta=\E^T\mathrm y, \mu)$ is a steady-state of the closed-loop system.
\end{proof}
By the same methods, we can also reduce the conditions \eqref{eq.4Requirements} to an inclusion in the edge-variables $\mu$.
\begin{prop} \label{prop.1Requirement_mu}
Let $\mu \in \R^{d|\EE|}$ be any vector. Then the following conditions are equivalent:
\begin{itemize}
\item[i)] The zero vector ${\bf 0}$ belongs to the set $\gamma^{-1}(\mu) - \E^T k(-\E\mu)$.
\item[ii)] There exists vectors $\mathrm {u,y},\zeta$ such that $(\mathrm{u,y},\zeta,\mu)$ is a steady-state of the closed-loop network $(\Sigma, \Pi, \G)$.
\end{itemize}
\end{prop}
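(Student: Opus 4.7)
The plan is to mirror the proof of Proposition \ref{prop.1Requirement_y} almost verbatim, using Proposition \ref{prop.4Requirements} as the bridge between the four-variable steady-state characterization and the single-variable inclusion. The set-difference inclusion for $\mu$ plays the same role as the set-sum inclusion for $\mathrm y$, so each direction of the equivalence should unpack into the same four conditions $(\mathrm{u,y})\in k$, $(\zeta,\mu)\in \gamma$, $\zeta=\E^T\mathrm y$, $\mathrm u=-\E\mu$.

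For the direction (ii)$\Rightarrow$(i), I would start from a steady-state 4-tuple $(\mathrm{u,y},\zeta,\mu)$ furnished by Proposition \ref{prop.4Requirements}. From $(\zeta,\mu)\in\gamma$ I get $\zeta\in\gamma^{-1}(\mu)$, and from $\mathrm u=-\E\mu$ together with $(\mathrm{u,y})\in k$ I get $\mathrm y\in k(-\E\mu)$, hence $\E^T\mathrm y\in \E^T k(-\E\mu)$. Combining with $\zeta=\E^T\mathrm y$ gives $\mathbf 0=\zeta-\E^T\mathrm y\in \gamma^{-1}(\mu)-\E^T k(-\E\mu)$, which is exactly (i).

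For the reverse direction (i)$\Rightarrow$(ii), the membership $\mathbf 0\in \gamma^{-1}(\mu)-\E^T k(-\E\mu)$ yields, by the very definition of the set-difference, vectors $\zeta\in \gamma^{-1}(\mu)$ and $\mathrm y\in k(-\E\mu)$ satisfying $\zeta=\E^T\mathrm y$. Setting $\mathrm u=-\E\mu$ then makes all four conditions of Proposition \ref{prop.4Requirements} hold, so $(\mathrm{u,y},\zeta,\mu)$ is a steady-state of $(\Sigma,\Pi,\G)$.

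I do not expect any real obstacle: the proof is a routine unpacking of the definitions of the set-valued maps $k,\gamma$ and their inverses, using Proposition \ref{prop.4Requirements} as a black box. The only mild subtlety is to be careful that $k$ and $\gamma^{-1}$ are genuinely set-valued, so membership statements like $\mathrm y\in k(\mathrm u)$ should be read as "$(\mathrm{u,y})\in k$" rather than function evaluation; once this is noted, both implications are one-line manipulations.
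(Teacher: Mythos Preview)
Your proposal is correct and is exactly the approach the paper takes: its proof of Proposition~\ref{prop.1Requirement_mu} reads in full ``Same as the proof of Proposition~\ref{prop.1Requirement_y},'' and what you have written is precisely that mirrored argument spelled out.
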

\begin{proof}
Same as the proof of Proposition \ref{prop.1Requirement_y}.
\end{proof}
\subsection{Connecting Steady-States to Network Optimization}
So far, we showed that the steady-states of the closed-loop system can be understood using the following two conditions:
\begin{align} \label{eq.2Requirements}
\begin{cases}
{\bf 0} \in k^{-1}(\mathrm y) + \E\gamma(\E^T \mathrm y) \\
{\bf 0} \in \gamma^{-1}(\mu) - \E^T k (-\E\mu).
\end{cases}
\end{align}
However, these conditions are highly nonlinear, and would be difficult to solve even if they were equations instead of inclusions. One method of dealing with nonlinear equations of the form $g(x) = 0$ for some function $g$, is to consider its \emph{integral function} instead. Suppose there is a function $G$ such that $g = \nabla G$. In that case, we can find a solution to $g(x) = 0$ by solving the unconstrained minimization problem $\min\,G(x)$.  If, in addition, the function $G$ is convex, the solution to the minimization problem can often be computed efficiently (i.e. in polynomial time).
%

%
In general, convex functions need not be smooth, or even differentiable. 
In this case, the notion of the \emph{subdifferential} of a convex function can be employed. The subdifferential of the convex function $G$ at the point $x$ is denoted $\partial G(x)$, and consists of all vectors $v$ such that
\[
G(y) \ge G(x) + v^T(y-x),\ \forall y.
\]
See \cite{Rockafellar1} for more on subdifferentials. Note that the subdifferential $\partial G$ is a set-valued map. Also, analogously to the differentiable case, $x$ is a minimum point of $G$ if and only if ${\bf 0}\in\partial G(x)$. Thus, if we are able to require that $k_i$ and $\gamma_e$ are gradients of convex functions (i.e., their integral functions are convex), then the nonlinear inclusions in \eqref{eq.2Requirements} may be solved using convex optimization. 
In fact, these functions have been characterized due to Rockafellar \cite{Rockafeller}:}}

\begin{defn}[Cyclic Monotonicity]
Let $d\ge1$ be an integer, and consider a subset $R$ of $\mathbb{R}^d\times\mathbb{R}^d$.
We say that $R$ is a \emph{cyclically monotonic} (CM) relation if for any $N\ge1$ and any pairs $(u_{1},y_{1}),\ldots,(u_{N},y_{N})\in R$
of $d$-vectors, the following inequality holds,
\begin{equation} \label{eq:CM}
\sum_{i=1}^{N}y_{i}^{T}(u_{i}-u_{i-1})\ge 0.
\end{equation}
Here, we use the convention that $u_0=u_N$.
We say that $R$ is \emph{strictly cyclically monotonic} (SCM) if the
inequality (\ref{eq:CM}) is sharp whenever at least two $u_{i}$'s are distinct. 
We term the relation as \emph{maximal} CM (or maximal SCM) if it is not
strictly contained in a larger CM (SCM) relation.
\end{defn}
\textcolor{black}{This is a generalization of the concept of monotone relations for SISO system, which we elaborate upon later.} 
\begin{thm}[\cite{Rockafeller}] \label{thm:Rockafellars} 
A relation $R\subset\mathbb{R}^n\times\mathbb{R}^n$ is the subgradient of a convex function if and only if it is maximal CM.  Moreover, it is the sub-gradient of a strictly convex function if and only if it is maximal SCM. The convex function is unique up to an additive scalar.
\end{thm}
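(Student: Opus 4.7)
The plan is to prove the two directions of the equivalence separately, then handle the strict case as a refinement, and finally dispatch uniqueness by a standard subdifferential argument.

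For the easy direction, suppose $R = \partial f$ for some convex $f$. If $(u_i, y_i) \in R$ for $i=1,\ldots,N$, the subgradient inequality gives $f(u_{i-1}) \ge f(u_i) + y_i^T(u_{i-1}-u_i)$, i.e., $y_i^T(u_i - u_{i-1}) \ge f(u_i) - f(u_{i-1})$. Summing over the cycle (so $u_0 = u_N$) telescopes the right-hand side to zero, establishing \eqref{eq:CM}. Maximality of $R$ will follow from the fact that $\partial f$ is a maximal monotone (and indeed maximal CM) object: any strict enlargement would violate the subgradient inequality for some test point. If $f$ is strictly convex, strict inequality in the subgradient relation whenever $u_{i-1}\ne u_i$ forces the cyclic sum to be strictly positive whenever two $u_i$'s differ, giving SCM.

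For the hard direction, given maximal CM $R$, I would construct $f$ following Rockafellar's original recipe. Fix a basepoint $(u_\ast, y_\ast) \in R$, and for each $u \in \mathbb{R}^n$ define
\[
f(u) = \sup\Bigl\{\, y_N^T(u - u_N) + \sum_{i=1}^{N} y_i^T(u_i - u_{i-1}) \,\Bigr\},
\]
where the supremum is taken over all finite chains $(u_1,y_1),\ldots,(u_N,y_N)\in R$ with the convention $u_0 = u_\ast$, $y_0=y_\ast$. As a pointwise supremum of affine functions of $u$, $f$ is convex and lower semicontinuous. The CM property of $R$, applied to the chain augmented with $(u_\ast,y_\ast)$, guarantees $f(u_\ast)\le 0$ (and in fact $f(u_\ast)=0$ by taking the trivial chain), so $f$ is proper and not identically $+\infty$ on the relevant set. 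Next, for any $(u,y)\in R$, appending $(u,y)$ to a near-optimal chain for some reference $\tilde u$ yields, after a short manipulation, the subgradient inequality $f(\tilde u) \ge f(u) + y^T(\tilde u - u)$ for all $\tilde u$, proving $R \subseteq \partial f$. Maximality of $R$ then forces $R = \partial f$, since $\partial f$ is itself CM by the forward direction and any proper extension of $R$ would contradict maximality.

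The main technical obstacle is the construction step just above: showing that $f$ is finite on a nonempty set and verifying $R\subseteq \partial f$ via the supremum definition requires a careful manipulation of the chain expression, essentially a change-of-basepoint argument that uses the CM inequality in both its original and augmented forms. For the strict statement, I would replace CM by SCM in the construction and track the strict inequality through the argument: when $u\neq u'$ both lie in $\text{dom}(\partial f)$, a convex combination yields strictly smaller chain values than the endpoints, producing strict convexity of $f$.

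Finally, for uniqueness, suppose $\partial f = \partial g$ for two proper lower semicontinuous convex functions. Their Legendre transforms then coincide on the common image, and since each function is determined up to an additive constant by its subdifferential (a standard consequence of integrating along line segments within the relative interior of the domain, using the fact that subgradients prescribe directional derivatives), we conclude $f - g$ is constant. This closes the theorem.
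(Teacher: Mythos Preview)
Your proposal is correct and follows essentially the same route as the paper's proof sketch (which appears in the commented-out block): the forward direction via the subgradient inequality and a telescoping cyclic sum, and the converse via Rockafellar's explicit supremum-over-chains construction $\psi(u)=\sup\{y_0^T(u_1-u_0)+\cdots+y_m^T(u-u_m)\}$, with the strict case handled by tracking strict inequalities. Your write-up is somewhat more complete than the paper's sketch: you explicitly address maximality (the paper's sketch only shows $R\subseteq\partial\psi$ and leaves the maximal-CM $\Leftrightarrow$ equality step implicit) and you supply a uniqueness argument, which the paper omits entirely.
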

\vspace{-15pt}
\textcolor{black}{
\begin{rem}
If $R$ is maximally CM, and $f$ is a convex function such that $R=\partial f$, then $f$ is the \emph{integral function} of $R$. 
\end{rem}}

\textcolor{black}{
Rockafellar's Theorem gives us a way to check that a relation is the subdifferential of a convex function. If we want to state the conditions in \eqref{eq.2Requirements} as the solutions of convex minimization problems, we need to assume that the input-output relations appearing are CM. This, together with \eqref{ConvergencePassivity}, motivates the following system-theoretic definition:}
\begin{defn}
A system $\Sigma$ is \emph{maximal equilibrium-independent cyclically monotone  (output strictly) passive} (MEICMP) if
\begin{itemize}
\item[i)] for every steady-state input-output pair $(\mathrm{u},\mathrm{y})$, the system $\Sigma$ is (output strictly) passive  with respect to  $\mathrm{u}$ and $\mathrm{y}$;
\item[ii)] the set of all steady-state input-output pairs, $R$, is maximally (strictly) cyclically monotonic.
\end{itemize}
If $R$ is strictly cyclically monotone, we say that $\Sigma$ is \emph{maximal equilibrium-independent strictly cyclically monotone} (output strictly) passive (MEISCMP).
\end{defn}
\vspace{-15pt}

\textcolor{black}{
\begin{rem} \label{rem.EveryMEIPisMEICMP}
It can be shown that when $d=1$, a relation is cyclically monotone if and only if it is monotone. Thus, a SISO system is MEIP if and only if it is MEICMP \cite{Rockafellar1}.
\end{rem}
}
\textcolor{black}{Now, suppose that the agents ${\Sigma_i}$ and the controllers ${\Pi_e}$ are all MEICMP with steady-state input maps $k_i$ and $\gamma_e$. We let $K_i$ and $\Gamma_e$ be the associated integral functions, which as a result of Theorem \ref{thm:Rockafellars}, are convex functions. 
We let $K=\sum_i K_i$ and $\Gamma = \sum_e \Gamma_e$ be their sum, so that $\partial K = k$ and $\partial \Gamma = \gamma$. As these are convex functions, we can look at the dual convex functions $K^\star$ and $\Gamma^\star$, namely
\[
K^\star(y) = -\inf_{\mathrm u} {K(u) - y^T u},
\] and similarly for $\Gamma^\star$ \cite{Rockafellar1}. These are convex functions that satisfy $\partial K^\star = k^{-1}$ and $\partial \Gamma^\star = \gamma^{-1}$ \cite{Rockafellar1}. The functions $K,K^\star,\Gamma,\Gamma^\star$ allows us to convert the conditions \eqref{eq.2Requirements} to the unconstrained minimization problems of $K^\star(\mathrm{y})+\Gamma(\E^T\mathrm y)$ and $K(-\E\mu)+\Gamma^\star(\mu)$. Recalling that $\mathrm{u} = -\E \mu$ and that $\zeta=\E^T\mathrm y$, we can state the minimization problems in the following form: 
\begin{center}
\begin{tabular}{ c||c }
 \textbf{Optimal Potential Problem}  & \textbf{Optimal Flow Problem}   \\
 (OPP) & (OFP)  \\\hline
 $ \begin{array}{cl} \underset{\mathrm{y,\zeta}}{\min} &K^\star(\mathrm{y}) + \Gamma(\zeta)\\
s.t.&{\E}^T\mathrm{y} = \zeta 
\end{array} $&  $ \begin{array}{cl}\underset{\mathrm{u,\mu}}{\min}& K(\mathrm{u}) + \Gamma^\star(\mu) \\
s.t. &\mathrm{u} = -{\E}\mu.
\end{array} $ 
\end{tabular}
\end{center}
These static optimization problems, known as the \emph{Optimal Potential Problem} and \emph{Optimal Flow Problem}, are two fundamental problems in the field of network optimization, which has been widely studied in computer science, mathematics, and operations research for many years \cite{Rockafellar1}. A well-known instance of these problems is the \emph{maximum-flow/minimum-cut problems}, which are widely used by algorithmists and by supply chain designers \cite{CormenRivest}.} 

\textcolor{black}{ We conclude this section by stating the connection between the steady-states of the closed-loop network and the network optimization problems.
\begin{thm}
Consider a network system $(\Sigma,\Pi,\G)$  and suppose that both the agents and controllers are maximally equilibrium-independent cyclically-monotone passive. Let $K$ and $\Gamma$ be the sum of the integral functions for the agents and for the controllers, respectively. For any 4-tuple of vectors $(\mathrm u,\mathrm y,\zeta,\mu)$, the following conditions are equivalent:
\begin{itemize}
\item[i)] $(\mathrm u,\mathrm y,\zeta,\mu)$ is a steady-state of the closed-loop;
\item[ii)] $(\mathrm u,\mu)$ and $(\mathrm y,\zeta)$ are dual optimal solutions of (OFP) and (OPP) respectively.
\end{itemize}
\end{thm}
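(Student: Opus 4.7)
The plan is to prove the equivalence by matching the KKT conditions of (OPP) and (OFP), expressed with subdifferentials, to the four algebraic steady-state conditions already established in Proposition \ref{prop.4Requirements}. The bridge is Rockafellar's theorem together with the identities $\partial K = k$, $\partial K^\star = k^{-1}$, $\partial\Gamma = \gamma$, and $\partial\Gamma^\star = \gamma^{-1}$, which are available because the agents and controllers are assumed MEICMP.

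First I would write the Lagrangian of (OPP), namely
\begin{equation*}
L_P(\mathrm y,\zeta,\lambda) = K^\star(\mathrm y) + \Gamma(\zeta) + \lambda^T(\E^T\mathrm y - \zeta),
\end{equation*}
and take subdifferentials in $\mathrm y$ and $\zeta$ to obtain the KKT system
\begin{equation*}
{\bf 0}\in \partial K^\star(\mathrm y) + \E\lambda,\qquad {\bf 0}\in \partial\Gamma(\zeta) - \lambda, \qquad \E^T\mathrm y = \zeta.
\end{equation*}
Identifying the multiplier $\lambda$ with the edge variable $\mu$ and using $\partial K^\star = k^{-1}$, $\partial\Gamma = \gamma$, the first inclusion says there exists $\mathrm u \in k^{-1}(\mathrm y)$ with $\mathrm u + \E\mu = {\bf 0}$, the second says $(\zeta,\mu)\in\gamma$, and the third is the tension constraint. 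This is exactly the four-part condition of Proposition \ref{prop.4Requirements}. Repeating the argument for (OFP) with Lagrangian $L_F(\mathrm u,\mu,\nu) = K(\mathrm u) + \Gamma^\star(\mu) + \nu^T(\mathrm u + \E\mu)$, I would identify the multiplier $\nu$ with $-\mathrm y$ and recover the same four conditions, establishing that the KKT systems of (OPP) and (OFP) coincide as a single set of relations on $(\mathrm u, \mathrm y, \zeta, \mu)$.

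The remaining step is to argue that these KKT conditions are both necessary and sufficient for optimality of the two problems, hence for dual optimality. Sufficiency follows from convexity of $K, K^\star, \Gamma, \Gamma^\star$ together with the standard subdifferential optimality argument: if the subgradient inclusions hold and the primal feasibility is satisfied, the Lagrangian is minimized at the candidate point and one reads off optimality. Necessity follows because both problems have convex objectives and purely affine equality constraints given by the incidence operator, so no constraint qualification beyond properness is required; Fenchel--Rockafellar duality applies and guarantees existence of subgradient multipliers at any optimum. Combining necessity and sufficiency with Proposition \ref{prop.4Requirements} yields the equivalence (i) $\Leftrightarrow$ (ii).

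The only subtle point I anticipate is the careful bookkeeping of signs when identifying the Lagrange multipliers of (OPP) and (OFP) with $\mu$ and $-\mathrm y$ respectively, and verifying that the multiplier produced by the KKT system of one problem is simultaneously the primal variable of the other; this is the essence of why the two problems are dual rather than merely related. Once this identification is made carefully, everything else is an application of convex duality for proper convex functions with affine constraints, and the theorem follows.
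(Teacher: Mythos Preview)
Your approach is correct and takes a genuinely different route from the paper. The paper eliminates the linear constraints by substituting $\zeta = \E^T\mathrm y$ into (OPP) and $\mathrm u = -\E\mu$ into (OFP), reducing each to an unconstrained convex minimization; it then uses the first-order condition ${\bf 0}\in\partial F$ on the resulting composite objectives and appeals to Propositions~\ref{prop.1Requirement_y} and~\ref{prop.1Requirement_mu} to translate those single inclusions back into the steady-state characterization. You instead retain the constrained formulation, write out the Lagrangian KKT system, and match the resulting four subdifferential and feasibility relations directly against Proposition~\ref{prop.4Requirements}. Your route makes the duality between (OPP) and (OFP) more transparent, since the identification of the multiplier of one problem with the primal variable of the other is precisely Lagrangian duality; the paper handles the ``dual'' part of ``dual optimal'' by a separate appeal to the Fenchel relations $\mathrm y\in k(\mathrm u)$ and $\mu\in\gamma(\zeta)$. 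Both arguments are standard convex-analysis moves. Yours requires the extra remark on constraint qualification (which, as you note, is automatic for affine equality constraints), while the paper's leans on the two auxiliary propositions already established; in exchange, your argument bypasses those propositions entirely and needs only Proposition~\ref{prop.4Requirements}.
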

\begin{proof}
We know that a convex function $F$ is minimized at a point $x$ if and only if $0\in \partial F(x)$. Applying this to the objective functions of (OPP) and (OFP) implies that they are minimized exactly when the following inclusions hold,
\begin{align} \label{eq.inclusions}
\begin{cases}
{\bf 0} \in k^{-1}(\mathrm y) + \E\gamma(\E^T \mathrm y) \\
{\bf 0} \in \gamma^{-1}(\mu) - \E^T k (-\E\mu).
\end{cases}
\end{align}
Thus, Propositions \ref{prop.1Requirement_y} and \ref{prop.1Requirement_mu} imply that if $(\mathrm {u,y},\zeta,\mu)$ is a steady-state of the closed-loop, then $(\mathrm u,\mu)$ and $(\mathrm y,\zeta)$ are optimal solutions of (OPP) and (OFP). The duality between them follows from $\mathrm y=k(\mathrm u)$, $\mu = \gamma(\zeta)$.
Conversely, if $(\mathrm u,\mu)$ and $(\mathrm y, \zeta)$ are dual optimal solutions, then $\mathrm y$ minimizes $K^\star(y)+\Gamma(\E^T y)$ and $\mu$ minimizes $K(-\E\mu)+\Gamma^\star(\mu)$. Again, a convex function is minimized only where ${\bf 0}$ is in its subdifferential, so we get the same inclusions \eqref{eq.inclusions}. By Propositions \ref{prop.1Requirement_y} and \ref{prop.1Requirement_mu} we get that $(\mathrm {u,y},\zeta,\mu)$ must be a steady-state of the closed-loop.
\end{proof}
\begin{rem}
The problems (OPP) and (OFP) are special as they are convex duals of each other; the cost functions $K^\star(\mathrm y) + \Gamma(\zeta)$ and $K(\mathrm u)+\Gamma^\star(\mu)$ are dual
\cite{Rockafellar1}. Consequently, if $(\mathrm y,\zeta)$ is an optimal solution of (OPP), then $(\mathrm u,\mu)$ is an optimal solution of (OFP) if and only if $\mu \in\gamma(\zeta),\ \mathrm u\in k^{-1}(\mathrm y)$ and $\mathrm u=-\E\mu$. Thus, solving (OPP) and (OFP) on their own gives a viable method to understand the steady-states $(\Sigma,\Pi,\G)$. 
\end{rem}
\subsection{Convergence to the Steady-State}
Up to now, we dealt with the steady-states of the closed-loop system, but we did not prove that the system converges to the steady-state. We now address this point. 
\begin{thm} \label{thm.Convergence}
Consider the network system $(\Sigma, \Pi, \G)$, and suppose all node dynamics are maximally equilibrium-independent cyclically monotone output-strictly passive and that the controller dynamics are maximally equilibrium-independent cyclically monotone passive. Then there exists constant vectors $\mathrm{u,y,\mu,\eta}$ such that $\lim_{t\rightarrow\infty} u(t) = \mathrm{u}$, $\lim_{t\rightarrow\infty} y(t) = \mathrm{y}$, $\lim_{t\rightarrow\infty} \mu(t) = \mathrm{\mu}$, and $\lim_{t\rightarrow\infty} \eta(t) = \mathrm{\eta}$. Moreover, $(\mathrm u, \zeta)$ and $(\mathrm y,\zeta)$ form optimal dual solutions to (OPP) and (OFP).
\end{thm}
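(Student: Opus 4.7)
The plan is to combine the existence of a steady-state (guaranteed by the preceding theorem identifying steady-states with optimal solutions of (OPP)/(OFP)) with a Lyapunov-type argument built from the individual agent and controller storage functions, and close the loop using LaSalle's invariance principle. I will first assume that (OPP) has a minimizer $\mathrm y$; by the preceding theorem this yields a complete steady-state 4-tuple $(\mathrm{u,y,\zeta,\mu})$ around which to run the passivity-based convergence argument. Boundedness of trajectories will come as a byproduct of the Lyapunov construction.

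First, I would fix such a steady-state $(\mathrm{u,y,\zeta,\mu})$. By output-strict MEICMP of each $\Sigma_i$ with respect to $(\mathrm{u}_i,\mathrm{y}_i)$, there exist nonnegative storage functions $S_i(x_i)$ and positive-definite functions $\rho_i$ with
\begin{equation*}
\dot S_i(x_i) \;\le\; (u_i-\mathrm{u}_i)^T(y_i-\mathrm{y}_i) \;-\; \rho_i(y_i-\mathrm{y}_i).
\end{equation*}
By MEICMP of each $\Pi_e$ with respect to $(\mathrm{\zeta}_e,\mathrm{\mu}_e)$, there exist nonnegative storage functions $W_e(\eta_e)$ with
\begin{equation*}
\dot W_e(\eta_e) \;\le\; (\zeta_e-\mathrm{\zeta}_e)^T(\mu_e-\mathrm{\mu}_e).
\end{equation*}
Define $V = \sum_i S_i + \sum_e W_e$. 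Using the interconnection identities $u=-\E\mu$, $\mathrm{u}=-\E\mathrm{\mu}$, $\zeta=\E^T y$, and $\mathrm{\zeta}=\E^T\mathrm{y}$, the bilinear cross-term telescopes:
\begin{equation*}
(u-\mathrm{u})^T(y-\mathrm{y}) + (\zeta-\mathrm{\zeta})^T(\mu-\mathrm{\mu}) \;=\; -(\mu-\mathrm{\mu})^T\E^T(y-\mathrm{y}) + (y-\mathrm{y})^T\E(\mu-\mathrm{\mu}) \;=\; 0.
\end{equation*}
Hence $\dot V \le -\sum_i \rho_i(y_i-\mathrm{y}_i) \le 0$, which makes $V$ a Lyapunov function and guarantees boundedness of $(x(t),\eta(t))$.

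Next, I would invoke LaSalle's invariance principle. Since $\dot V \le 0$, trajectories converge to the largest invariant set $\mathcal{M}$ on which $\rho_i(y_i-\mathrm{y}_i)\equiv 0$ for all $i$, i.e.\ $y(t)\equiv\mathrm{y}$. On $\mathcal M$, the controller input is frozen at $\zeta(t)\equiv\E^T\mathrm{y}=\mathrm{\zeta}$, so each $\Pi_e$ runs autonomously under a constant input; by passivity of $\Pi_e$, $W_e$ is nonincreasing and bounded below on this invariant set, and any point in $\mathcal M$ must correspond to $\mu_e=\mathrm{\mu}_e'$ with $(\mathrm{\zeta}_e,\mathrm{\mu}_e')\in\gamma_e$. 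Consequently, $u(t)=-\E\mu(t)$ also settles at a constant. This yields the limits $y(t)\to\mathrm{y}$, $\mu(t)\to\mathrm{\mu}^\infty$, $\eta(t)\to\mathrm{\eta}^\infty$, and $u(t)\to\mathrm{u}^\infty$, and the limiting 4-tuple satisfies the steady-state conditions \eqref{eq.4Requirements}. The preceding theorem then identifies this limiting 4-tuple with dual optimal solutions of (OPP) and (OFP), completing the claim.

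The main obstacle I anticipate is the last step on $\mathcal M$: when the controller relation $\gamma_e$ is set-valued, LaSalle alone does not immediately pin $\mu(t)$ to a single limit. To resolve this, I would use that on $\mathcal M$ the input $\zeta$ is constant and each $\Pi_e$ is a passive autonomous system whose storage $W_e$ is monotone; this forces the $\omega$-limit set to consist of equilibria of $\Pi_e$ driven by $\mathrm{\zeta}_e$, which are exactly the points $\mathrm{\mu}_e'$ with $(\mathrm{\zeta}_e,\mathrm{\mu}_e')\in\gamma_e$. Combining this with boundedness of $\eta(t)$ and the LaSalle conclusion then yields a genuine limit rather than merely an $\omega$-limit set. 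This is also where the output-strict passivity assumption on the \emph{agents} (rather than merely the controllers) is essential: it is what guarantees $y(t)\to\mathrm{y}$ in the first place, so that the controller inputs eventually become constant.
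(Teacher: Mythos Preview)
Your Lyapunov argument---summing the agent and controller storages and using $u=-\E\mu$, $\zeta=\E^T y$ to cancel the cross terms---is exactly the mechanism behind Theorem~\ref{ConvergencePassivity}, which the paper simply cites to conclude $y(t)\to\mathrm y$. So up to that point you and the paper coincide. The difference is in what comes after. The paper does \emph{not} prove the general-controller case: it explicitly specializes to integrator controllers $\dot\eta_e=\zeta_e$, $\mu_e=\psi_e(\eta_e)$ and declares the general case ``analogous but more involved.'' In that special case the steady state forces $\mathrm\zeta=\E^T\mathrm y=0$, so $\zeta(t)\to 0$; the paper then argues that $\eta(t)$ converges ``by integrating,'' after which $\mu=\psi(\eta)$ and $u=-\E\mu$ follow as static images of $\eta$. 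In other words, the paper avoids your LaSalle-on-$\mathcal M$ analysis entirely by choosing a controller class for which $\mu$ and $u$ are continuous functions of $\eta$ alone.

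The obstacle you flag in your last paragraph is genuine, and your proposed resolution does not close it. On $\mathcal M$ each $\Pi_e$ is driven by the constant $\mathrm\zeta_e$ and $\dot W_e\le 0$, so $W_e(\eta_e(t))$ converges---but that does not force $\eta_e(t)$ or $\mu_e(t)$ to converge to a single point when the controller equilibrium set for input $\mathrm\zeta_e$ is a continuum; the $\omega$-limit set can be a nontrivial connected arc of equilibria sitting on a level set of $W_e$. Your sentence ``combining this with boundedness \dots\ then yields a genuine limit'' is precisely the step that needs an argument and currently has none. To align with the paper, specialize to the integrator form so that convergence of $\mu$ and $u$ reduces to convergence of $\eta$; if you want the general statement, you will need an additional hypothesis (for instance, output-strict MEICMP of the controllers, which on $\mathcal M$ would also pin $\mu_e\equiv\mathrm\mu_e$) or a separate argument establishing that each controller's $\omega$-limit set under constant input is a singleton.
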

We will give a proof of Theorem \ref{thm.Convergence} for the case in which the controllers are given by the following form:
\begin{align}
\Pi_e : \begin{cases} \dot{\eta_e} = \zeta_e \\ \mu_e = \psi_e (\eta_e). \end{cases}
\end{align}
The proof for the general case is analogous but more involved and is therefore not considered here. 
} 
\begin{proof}
Our assumption implies that the optimization problems (OPP) and (OFP) have dual optimal solutions solutions, meaning that a steady-state solution exists. The equilibrium-independent passivity assumption implies that there are storage functions $S_i$ (for $i\in\mathbb{V}$) and $W_e$ (for $e\in\mathbb{E}$), such that
\begin{align}
\begin{cases}
 \dot{S_i} &\hspace{-8pt}\le -\rho_i ||y_i(t)-\mathrm{y}_i||^2 + (y_i(t) - \mathrm{y}_i)^T(u_i(t) - \mathrm{u}_i)\\
\dot{W_e} &\hspace{-8pt}\le  (\mu_e(t) - \mathrm{\mu}_e)^T(\zeta_e(t) - \mathrm{\zeta}_e) \end{cases} .
\end{align}
Theorem \ref{ConvergencePassivity} implies that $y(t)$ converges to $\mathrm{y}$, implying that $\zeta(t)$ converges to $0 = \mathrm{\zeta} = \mathcal{E}^T\mathrm{y}$. Integrating implies that $\eta(t)$ converges to some $\mathrm{\eta}$, as $\dot{\eta}= \zeta$.
In turn, this implies that $\mu(t)$ converges to $\mathrm{\mu} = \psi(\mathrm{\eta})$ and that $u(t)$ converges to $\mathrm{u} = -\mathcal{E}\mathrm{\mu}$. It is clear that $(\mathrm{u,y})$ is a steady-state pair, and furthermore that $(\mathrm{u,y,\zeta,\mu})$ satisfy the conditions in Proposition \ref{prop.4Requirements}, meaning that it is also a steady-state of the closed-loop and thus it gives rise to an optimal solution of (OPP) and (OFP). This concludes the proof of the theorem.
\end{proof}

\textcolor{black}{\begin{rem}
As a consequence of Remark \ref{rem.EveryMEIPisMEICMP}, Theorem \ref{thm.Convergence} also holds for output-strictly MEIP SISO agents and MEIP SISO controllers.  This is the analysis result that was achieved in \cite{SISO_Paper}.  The result presented here is therefore more general, and the proof derivation, relying on integrating steady-state equations (or inclusions), provides a different approach than what was presented in \cite{SISO_Paper}.
%
%
%
\end{rem}}

\if(0)
In \cite{SISO_Paper}, the concept of monotone relations is used to provide convergence results for a network system $(\Sigma, \Pi, \mathcal{G})$ comprised of SISO agents (Theorem \ref{thm.SISO}). 
However, in most applications the systems in play are MIMO, especially if one considers generalized plants including uncertainties. In this direction, we aim to extend this work for network systems consisting of MIMO agents. The main analytic tool required for this extension is to generalize the notion of monotone relations to handle vector valued relations - this is accomplished by the notion of \emph{cyclically monotone relations}.  
%
The section is structured as follows - first we introduce and explore the notion of cyclically monotone relations, and then show how it can be used to prove an analysis result in the spirit of \cite{SISO_Paper}. 

\subsection{Cyclically Monotone Relations} \label{subsec.CM}
The goal of this subsection is to present the notion of cyclically monotone relations.

\begin{defn}[Cyclic Monotonicity]
Let $n\ge1$ be an integer, and consider a subset $R$ of $\mathbb{R}^n\times\mathbb{R}^n$.
%
We say that $R$ is a \emph{cyclically monotonic} (CM) relation if for any $N\ge1$ and any pairs $(u_{1},y_{1}),\ldots,(u_{N},y_{N})\in R$
of $n$-vectors, the following inequality holds,
\begin{equation} \label{eq:CM}
\sum_{i=1}^{N}y_{i}^{T}(u_{i}-u_{i-1})\ge 0.
\end{equation}
%
Here, we use the convention that $u_0=u_N$.
We say that $R$ is \emph{strictly cyclically monotonic} (SCM) if the
inequality (\ref{eq:CM}) is sharp whenever at least two $u_{i}$s are distinct. 
%
We term the relation as \emph{maximal} CM (or maximal SCM) if it is not
strictly contained in a larger CM (SCM) relation.
\end{defn}
\begin{prop}\label{proposition.id_scm}
The identity relation on $\mathbb{R}^n$, defied by $R_{id} = \{(u,u): u\in\mathbb{R}^n\}$, is strictly cyclically monotonic.
\end{prop}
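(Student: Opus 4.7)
The plan is to reduce the cyclic sum defining cyclic monotonicity to a telescoping computation that collapses to a manifestly nonnegative quantity. Since $(u_i,y_i) \in R_{id}$ means $y_i = u_i$, the quantity to estimate becomes $\sum_{i=1}^N u_i^T(u_i - u_{i-1})$ with the convention $u_0 = u_N$. The key identity I would invoke is the polarization-style expansion
\begin{equation*}
2\, u_i^T (u_i - u_{i-1}) = \|u_i\|^2 + \|u_i - u_{i-1}\|^2 - \|u_{i-1}\|^2.
\end{equation*}

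Summing this identity over $i = 1,\ldots,N$, the telescoping contribution $\sum_{i=1}^N (\|u_i\|^2 - \|u_{i-1}\|^2)$ vanishes because the indices are cyclic ($u_0 = u_N$), leaving
\begin{equation*}
\sum_{i=1}^{N} u_i^T(u_i - u_{i-1}) \;=\; \frac{1}{2}\sum_{i=1}^{N}\|u_i - u_{i-1}\|^2 \;\ge\; 0,
\end{equation*}
which establishes cyclic monotonicity. For the strict version, the right-hand side equals zero exactly when $u_i = u_{i-1}$ for every $i$, and traversing the cycle then forces all $u_i$'s to coincide; contrapositively, if at least two of the $u_i$'s are distinct then some consecutive pair along the cycle must differ, and the inequality is sharp.

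As a sanity check consistent with the rest of the paper, this is also immediate from Theorem \ref{thm:Rockafellars}: the identity map is the gradient of the strictly convex function $f(u) = \tfrac{1}{2}\|u\|^2$, so $R_{id}$ is maximal SCM and in particular SCM. I would prefer the direct computation above, however, because it keeps the proposition self-contained and explicit. There is no real obstacle here; the only care needed is the cyclic indexing convention ($u_0 = u_N$), which is what makes the telescoping term collapse and is also what turns ``some consecutive difference is nonzero'' into the stated condition ``at least two $u_i$'s are distinct.''
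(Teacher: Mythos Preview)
Your proof is correct and takes a genuinely different route from the paper. The paper stacks $u_1,\ldots,u_N$ into a single vector $\underline{u}\in\mathbb{R}^{Nn}$, introduces the cyclic shift operator $\sigma$, and writes the cyclic sum as the quadratic form $\underline{u}^T(I-\sigma)\underline{u}$; it then argues that the symmetrized matrix $(I-\sigma)+(I-\sigma)^T$ is positive semidefinite by computing its eigenvalues as $2-2\cos\phi$ (since those of $\sigma$ are $N$-th roots of unity), and identifies the kernel of $I-\sigma$ with the ``all blocks equal'' subspace to get strictness. Your polarization-and-telescoping computation reaches the same conclusion, namely that the cyclic sum equals $\tfrac12\sum_i\|u_i-u_{i-1}\|^2$, but with no linear algebra beyond the law of cosines; it is shorter, entirely self-contained, and makes the equality case transparent. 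The paper's spectral argument, on the other hand, packages the computation in a way that would generalize more readily to weighted or matrix-valued variants (replace $I$ by a positive semidefinite $S$, as the paper does in the subsequent Theorem on linear relations), so each approach has its place.
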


\begin{proof}

Consider $u_1,\ldots,u_N\in\mathbb{R}^n$ and let $\underline{u}=[u_1^T,\ldots,u_N^T]^T$ be the stacked vector. We consider the shift operator on $Nn$-vectors,
\begin{equation}
\sigma([u_1^T,\ldots,u_N^T]^T) = [u_2^T,u_3^T,\ldots, u_N^T,u_1^T]^T.
\end{equation}
The cyclic sum \eqref{eq:CM} of the relation $R_{id}$ can be written in matrix form as $\underline{u}^T(I-\sigma)\underline{u}$. Thus, we want to make sure that the matrix $I-\sigma$ defines a positive semi-definite quadratic form, and that any eigenvector with a zero eigenvalue is of the form $[u^T,u^T,\ldots,u^T]^T$. 

The first part is equivalent to saying that $K=(I-\sigma)+(I-\sigma)^T$ is a positive semi-definite matrix. The fact that $\sigma^T=\sigma^{-1}=\sigma^{N-1}$, and the fact that all of $\sigma$'s eigenvalues are roots of unity of order $N$ imply that $K$'s eigenvalues are of the form $2-e^{j\phi}-e^{-j\phi}=2-2\cos (\phi) \ge 0$, meaning that $K$ is indeed positive semi-definite. Furthermore, if $\underline{u}$ is an eigenvector with $(I-\sigma)\underline{u}=0$, then $\sigma\underline{u}=\underline{u}$. Writing the last equation by coordinates implies that $\underline{u}$ is indeed of the form $[u^T,u^T,\ldots,u^T]^T$. This shows that $R_{id}$ is SCM. 
\end{proof}

We now show that under certain assumptions, the relations induced by steady-state input output pairs of a stable linear system also defines a CM relation.  Consider the linear system
\begin{equation} \label{eq:LinearSystem}
\begin{cases}
\dot{x} = Ax+Bu+P\mathrm{w}\\
y=Cx+Du+G\mathrm{w},
\end{cases}
\end{equation}
where $u$ is the control input, $x$ is the state, $y$ is the output, and $\mathrm{w}$ is some constant exogenous input, and assume $A$ is Hurwitz. 
For the system \eqref{eq:LinearSystem}, a constant input $u_{ss}$ yields the steady-state output \cite{Linear_System_Transfer_Function} 
\begin{equation}
y_{ss} = (-CA^{-1}B+D)u_{ss} + (-CA^{-1}P+G)\mathrm{w}.
\end{equation}
In other words, the steady-state input-output relation is given by $R=\{(u,Su+v):u\in\mathbb{R}^n\}$, where $S=CA^{-1}B+D$, and $v = (CA^{-1}P+G)\mathrm{w}$ is a constant vector. 
\begin{thm}{(Linear Relations)}\label{thm:LinearRelations}
The relation $R$ defined above is CM if the matrix $S$ is symmetric positive semi-definite. Furthermore, the relation $R$ is SCM if $S$ is positive-definite.
\end{thm}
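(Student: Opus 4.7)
The plan is to substitute the affine parameterization $y_i = Su_i + v$ directly into the cyclic monotonicity sum and exploit the symmetry of $S$ to rewrite it in manifestly nonnegative form. I would begin by expanding
\[
\sum_{i=1}^N y_i^T(u_i - u_{i-1}) = \sum_{i=1}^N u_i^T S(u_i - u_{i-1}) + v^T\sum_{i=1}^N (u_i - u_{i-1}).
\]
The second sum telescopes around the ccycle and vanishes because of the convention $u_0 = u_N$, so the constant offset $v$ plays no role in the inequality.

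The remaining quantity $T := \sum_{i=1}^N u_i^T S(u_i - u_{i-1})$ is handled by the symmetry-driven identity
\[
(u_i - u_{i-1})^T S(u_i + u_{i-1}) = u_i^T S u_i - u_{i-1}^T S u_{i-1},
\]
which uses $u_i^T S u_{i-1} = u_{i-1}^T S u_i$ and whose cyclic sum telescopes to zero. Setting $T' := \sum_{i=1}^N u_{i-1}^T S(u_i - u_{i-1})$, the identity above reads $T + T' = 0$, while a direct expansion gives $T - T' = \sum_{i=1}^N (u_i - u_{i-1})^T S(u_i - u_{i-1})$. Adding the two relations yields the key decomposition
\[
\sum_{i=1}^N y_i^T(u_i - u_{i-1}) = T = \tfrac{1}{2}\sum_{i=1}^N (u_i - u_{i-1})^T S(u_i - u_{i-1}),
\]
so the entire cyclic monotonicity expression collapses to a sum of quadratic forms in $S$ evaluated at the consecutive differences around the cycle.

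Both claims are then immediate from this identity. If $S$ is symmetric positive semidefinite, every summand on the right is nonnegative, so the CM inequality holds. If $S$ is positive definite, every summand is nonnegative and equals zero only when $u_i = u_{i-1}$; if at least two of the $u_i$ are distinct, then at least one consecutive difference around the cycle must be nonzero, producing a strictly positive term and hence a strict inequality, which is exactly the SCM condition.

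The main obstacle is noticing the symmetry-based telescoping identity that pairs the mixed terms $u_i^T S u_{i-1}$ and $u_{i-1}^T S u_i$; once in hand, the rest of the argument is mechanical. An alternative route uses the stacked-vector shift operator from the proof of Proposition \ref{proposition.id_scm}, reducing the claim to positive semidefiniteness of an appropriate Kronecker product involving $I-\sigma$ and $S$, but the elementary telescoping derivation sketched above is more direct because the bilinear structure here is more explicit than in the identity-relation case.
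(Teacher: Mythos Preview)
Your argument is correct, and it follows a genuinely different route from the paper's proof. The paper first observes, as you do, that the constant offset $v$ telescopes away. From there, however, it factors $S=L^{T}L$ (Cholesky) and shows that the cyclic sum for $R_0=\{(u,Su)\}$ equals the cyclic sum for the identity relation evaluated at the transformed points $\{(Lu_i,Lu_i)\}$, then invokes Proposition~\ref{proposition.id_scm}, whose proof in turn analyzes the eigenvalues of the block shift operator $I-\sigma$. Your proof bypasses both the factorization and the spectral step: the symmetry-based telescoping identity $T+T'=0$ combined with $T-T'=\sum_i(u_i-u_{i-1})^{T}S(u_i-u_{i-1})$ gives the closed form
\[
\sum_{i} y_i^{T}(u_i-u_{i-1})=\tfrac12\sum_{i}(u_i-u_{i-1})^{T}S(u_i-u_{i-1})
\]
directly. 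This is more elementary and self-contained, and it makes the role of the symmetry hypothesis on $S$ completely transparent; the paper's approach, on the other hand, has the advantage of unifying the linear case with the identity-relation result it had already established. You correctly note the shift-operator alternative at the end; that is in fact precisely the machinery the paper chooses.
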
 

\begin{proof}
We first define the relation $R_0 = \{(u,Su): u\in\mathbb{R}^n\}$. We claim that $R$ and $R_0$ have the same cyclic sums (appearing in (\ref{eq:CM})). Indeed, if one takes $\{(u_i,y_i)\}_{i=1}^N\in R$, its cyclic sum is
\begin{align*}
	\sum_{i=1}^N y_i^T(u_i-u_{i-1}) &= \sum_{i=1}^N (Su_i+v)^T(u_i-u_{i-1}) \\
	&= \sum_{i=1}^N (Su_i)^T(u_i-u_{i-1}),
\end{align*}  
since $\sum_{i=1}^N v^T(u_i-u_{i-1}) = v^T\sum_{i=1}^N(u_i - u_{i-1}) = 0$. Thus, $R$ is CM (or SCM) if and only if $R_0$ is.

Consider now the relation $R_0$ and the identity relation $R_{id}=\{(u,u):u\in\mathbb{R}^n\}$. We claim that if $S$ is positive semi-definite, then cyclic sums of $R_0$ are also cyclic sums of $R_{id}$. 
For positive semi-definite $S$, there exists a matrix $L$ such that $S = L^TL$.  When $S$ is positive-definite, $L$ is also invertible. 
It is now straightforward to show that the cyclic sum of $\{(u_i,Su_i)\}_{i=1}^N\in R_0$ is equal to the cyclic sum of $\{(Lu_i,Lu_i)\}_{i=1}^N\in R_{id}$, since
\begin{equation*}
u_i^T S(u_i-u_{i-1}) = (Lu_i)^T (Lu_i-Lu_{i-1}).
\end{equation*}
By Proposition \ref{proposition.id_scm}, the relation $R_0$ is CM whenever $S$ is positive semi-definite, and SCM when $S$ is positive-definite.
\end{proof}

\begin{exam}
One can consider a linear system of the form \eqref{eq:LinearSystem} where $C=B^T, A=A^T, D=D^T, G=P=0$. Such systems are known as \emph{symmetric systems}, and their passivity was extensively researched \cite{Willems,MeisamiazadMohammadpourGrigoriadis}. Note that by Theorem \ref{thm:LinearRelations}, if $A$ is negative definite and $D-B^TAB$ is positive-definite, then the input-output relation of the system is cyclically monotonic.
\end{exam}

\subsection{Rockefeller's Theorem and Implications}
In the paper \cite{SISO_Paper}, monotone relations are so vital because they give rise to convex functions, namely by  integration. To show that CM relations are indeed generalizations of the monotone relations, we first state and prove the main theorem about CM relations, due to Rockafellar.
\begin{thm} \label{thm:Rockafellars} [\cite{Rockafeller}]
A relation $R\subset\mathbb{R}^n\times\mathbb{R}^n$ is CM if and only if it is contained in the sub-gradient of a convex function $\psi:\mathbb{R}^n\rightarrow\mathbb{R}$. Furthermore, $R$ is SCM if and only if it is contained in the sub-gradient of a strictly convex function.
\end{thm}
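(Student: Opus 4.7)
The plan is to establish both directions of each equivalence using two standard tools: Rockafellar's primitive construction for the forward implications, and a telescoping argument on the subgradient inequality for the reverse implications. The strict case will parallel the non-strict one, with one genuine subtlety in the forward direction.

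For the forward direction ($R$ CM $\Rightarrow$ $R$ contained in the subgradient of a convex function), I would fix a basepoint $(a, a^*) \in R$ and define the primitive
\[
\psi(u) \;=\; \sup \Big\{\, y_N^T(u - u_N) + \sum_{i=1}^N y_{i-1}^T(u_i - u_{i-1}) \,:\, N \ge 0,\ (u_i, y_i) \in R \Big\},
\]
with the convention $(u_0, y_0) := (a, a^*)$; the empty chain ($N=0$) contributes $(a^*)^T(u - a)$. Three properties must be verified. First, $\psi$ is convex as a supremum of functions that are affine in $u$. Second, $\psi(a)$ is finite: any chain contributing to $\psi(a)$ can be closed into a cycle by appending $(a, a^*)$, and the CM inequality applied to that cycle bounds the chain sum above by $0$, while the empty chain gives the matching lower bound, so $\psi(a) = 0$. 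Third, for any $(u^*, y^*) \in R$, I would take any admissible chain ending with $(u^*, y^*)$ and extend it by one step to an arbitrary $u$; the resulting chain expression for $\psi(u)$ equals (the corresponding chain sum for $\psi(u^*)$) $+ (y^*)^T(u - u^*)$. Taking the supremum over chains ending at $(u^*, y^*)$ yields $\psi(u) \ge \psi(u^*) + (y^*)^T(u - u^*)$, exactly the subgradient inequality $y^* \in \partial \psi(u^*)$.

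For the reverse direction ($R \subseteq \partial \psi$ for some convex $\psi$ $\Rightarrow$ $R$ CM), given pairs $(u_1, y_1), \ldots, (u_N, y_N) \in R$ with $u_0 := u_N$, I would apply the subgradient inequality at each pair, $\psi(u_{i-1}) \ge \psi(u_i) + y_i^T(u_{i-1} - u_i)$, and sum over $i = 1, \ldots, N$. The $\psi$ terms telescope to zero because $u_0 = u_N$, and rearranging yields $\sum_i y_i^T(u_i - u_{i-1}) \ge 0$, the CM inequality.

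The strict case requires separate treatment. The reverse direction is immediate: a standard midpoint argument shows that strict convexity of $\psi$ forces the subgradient inequality to be strict whenever $u_{i-1} \neq u_i$, so any cycle with at least two distinct $u_i$ gives the strict cyclic inequality, i.e., SCM. The forward direction is the main obstacle, because Rockafellar's primitive applied to an SCM relation is not automatically strictly convex — e.g., $R = \{(0, y) : y \in [-1, 1]\}$ is SCM, yet the construction produces $\psi(u) = |u|$. To handle this, I would first extend $R$ via Zorn's lemma to a maximal SCM relation $\widetilde R$, apply the primitive construction to $\widetilde R$, and then use maximality to argue that any failure of strict convexity of the primitive at two distinct points of $\mathrm{dom}(\partial \psi)$ would allow $\widetilde R$ to be enlarged while preserving SCM, yielding a contradiction. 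Strict convexity of the resulting primitive $\psi$ then transfers through the inclusion $R \subseteq \widetilde R \subseteq \partial \psi$ to give the required containment.
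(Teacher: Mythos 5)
Your treatment of the non-strict equivalence is essentially the paper's own proof: the reverse direction is the same telescoping of subgradient inequalities around a closed cycle, and the forward direction is the same Rockafellar primitive $\psi(u)=\sup\{y_0^T(u_1-u_0)+\cdots+y_m^T(u-u_m)\}$, for which you supply the verifications (convexity as a supremum of affine functions, $\psi(a)=0$ by closing a chain into a cycle, the subgradient inequality by appending $(u^*,y^*)$ to a chain) that the paper leaves to the reader. One caveat you inherit from the paper: the primitive is only guaranteed to be a \emph{proper} convex function, finite on the domain of $R$ but possibly $+\infty$ elsewhere (for $R=\{0\}\times\R^n$ it is $I_0$), so the statement's insistence that $\psi$ map $\R^n$ into $\R$ needs extended-real-valued convex functions; that is an imprecision of the statement rather than of your argument. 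Your strict reverse direction (the midpoint argument) also matches the paper.

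You genuinely depart from the paper only in the strict forward direction, and you are right to: the paper dismisses it with ``it is also straightforward to show that if $R$ is SCM then $\psi$ is strictly convex,'' and your counterexample $R=\{(0,y):y\in[-1,1]\}$ with primitive $|u|$ shows that claim is false as written. However, your repair is where the gap now lies. The assertion ``a maximal SCM relation has a strictly convex primitive'' is the entire difficulty, and the mechanism you sketch does not obviously close. If $\psi$ fails to be strictly convex it is affine on a segment $[x_1,x_2]$, and the natural witness is a \emph{pair} of points $x_1\neq x_2$ on that segment with subgradients $h_1,h_2$ agreeing in the direction $x_2-x_1$, whose two-element cycle has sum $(h_1-h_2)^T(x_1-x_2)=0$; this contradicts SCM of $\widetilde R$ only if \emph{both} pairs lie in $\widetilde R$, whereas maximality governs one-element enlargements. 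If adjoining $(x_1,h_1)$ preserves SCM then maximality puts it in $\widetilde R$, after which adjoining $(x_2,h_2)$ manifestly destroys SCM, so maximality yields no contradiction --- only the harmless conclusion $(x_2,h_2)\notin\widetilde R$. You would also need to verify that a candidate new pair forms strictly positive cycles with \emph{all} existing elements of $\widetilde R$, not merely that it lies in $\partial\psi$. As written, the strict forward implication remains unproved; either supply this argument in full or, as the paper does, defer it explicitly to \cite{Rockafeller}.
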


\begin{proof}
We provide here a sketch of the proof for the convenience of the reader (the complete proof can be found in \cite{Rockafeller}).
On one hand, if $R\subseteq\partial\psi$, and we take pairs $\{(u_i,y_i)\}_{i=1}^N$, then by convexity of $\psi$, for each $i$,
\begin{equation} \label{eq:subgrad}
\psi(u_i) \ge \psi(u_{i-1}) + y_{i-1}^T\cdot(u_i-u_{i-1}).
\end{equation}
Summing inequality \eqref{eq:subgrad} over $i$ yields a common factor $\sum_{i}\psi(u_i)$, which cancels, leaving the inequality
\[
0 \ge \sum_{i=1}^N y_{i-1}^T\cdot(u_i-u_{i-1}),
\]
the desired result. Furthermore, if the function $\psi$ was strictly convex, and not all $u_i$ are the same, then at least one of the inequalities (\ref{eq:subgrad}) is strict, making all later inequalities strict, giving us an SCM relation.

On the other hand, if $R$ is a CM relation, one can define the function
\small
\begin{equation}
\psi(u)=\sup\Bigg\{ y_0^T(u_1-u_0)+y_1^T(u_2-u_1)+\cdots+y_m^T(u-u_m)\Bigg\},
\end{equation}
\normalsize
where the supremum is taken over all pairs $\{(u_i,y_i)\}_{i=1}^m\in R$, and $m$ is allowed to change arbitrarily. When the pairs are fixed, we obtain a linear function in $u$, so $\psi$ is convex as a supremum of convex functions \cite{Rockafellar1}. One can also show that the sub-gradient of $\psi$ contains $R$. It is also straightforward to show the if $R$ is SCM, then $\psi$ is strictly convex. 
\end{proof}

We now prove that CM relations are indeed a generalization of monotone relations. We cite two lemmas regarding monotone functions on $\mathbb{R}$. Their proof can be found in the appendix.
\begin{lem}\label{lemma.monotoneFunctions}
Let $f:I\to\mathbb{R}$ be a monotone increasing function defined on an interval $I\subseteq\mathbb{R}$. Fix some $x_0\in I$ and define $g:I\to\mathbb{R}$ by $g(x)=\int_{x_0}^{x}{f(t)dt}$. Then $g$ is a convex function, and the sub-differential $\partial g$ contains the set $\{(x,f(x)): x\in I\}$. Furthermore, if $f$ is strictly ascending, then $g$ is strictly convex.
\end{lem}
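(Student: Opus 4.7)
The plan is to prove the three claims in sequence: well-definedness and convexity of $g$, the subdifferential inclusion $(x,f(x))\in \partial g$, and the strict version. Since $f$ is monotone on $I$, it is bounded on every compact subinterval and has at most countably many discontinuities, hence is Riemann integrable, so $g(x)=\int_{x_0}^x f(t)\,dt$ is well-defined for every $x\in I$ (with the usual sign convention when $x<x_0$).

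For convexity, I would use the equivalent three-slope characterization: $g$ is convex on $I$ if and only if for every $x<z<y$ in $I$,
\[
\frac{g(z)-g(x)}{z-x}\ \le\ \frac{g(y)-g(z)}{y-z}.
\]
The left-hand side equals $\frac{1}{z-x}\int_x^z f(t)\,dt$, which, by monotonicity of $f$, is bounded above by $f(z)$. The right-hand side equals $\frac{1}{y-z}\int_z^y f(t)\,dt$, which, again by monotonicity, is bounded below by $f(z)$. Chaining these inequalities gives convexity. If $f$ is strictly ascending, then on the interval $(x,z)$ we have $f(t)<f(z)$ except possibly at the endpoint, and similarly $f(t)>f(z)$ on $(z,y)$ except at the endpoint, so both bounds become strict and we obtain strict convexity.

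For the subdifferential inclusion, I would show directly that $g(y)\ge g(x)+f(x)(y-x)$ for all $x,y\in I$. Fix $x$; if $y>x$, then
\[
g(y)-g(x)=\int_x^y f(t)\,dt \ \ge\ \int_x^y f(x)\,dt = f(x)(y-x),
\]
using $f(t)\ge f(x)$ on $[x,y]$. If $y<x$, then $f(t)\le f(x)$ on $[y,x]$ gives
\[
g(x)-g(y)=\int_y^x f(t)\,dt \ \le\ f(x)(x-y),
\]
which rearranges to the same inequality. Thus $f(x)\in \partial g(x)$ for every $x\in I$, which is precisely the claim that $\{(x,f(x)):x\in I\}\subseteq \partial g$.

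I don't anticipate any serious obstacle here; this is essentially a one-variable calculus computation. The only mild subtlety is that monotone functions need not be continuous, so one cannot invoke the fundamental theorem of calculus to write $g'=f$ pointwise. This is harmless for the argument above because all the estimates are done at the level of the Riemann integral using only the monotonicity bound $\min f \le \tfrac{1}{b-a}\int_a^b f \le \max f$ on a subinterval, which holds regardless of continuity.
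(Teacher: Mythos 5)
Your proposal is correct and follows essentially the same route as the paper's proof: both rest on the elementary bound that the average of $f$ over a subinterval to the left of $z$ is at most $f(z)$, which is at most the average over a subinterval to the right, and the subdifferential inclusion is established by the identical two-case estimate $\int_{x_1}^{x} f(t)\,dt \ge f(x_1)(x-x_1)$. The only cosmetic difference is that you package the convexity step as the three-chord slope inequality while the paper verifies the defining inequality $g(\alpha x+(1-\alpha)y)\le \alpha g(x)+(1-\alpha)g(y)$ directly (after normalizing $x_0=x$); your added remark on Riemann integrability of monotone functions is a welcome bit of rigor the paper omits.
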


\begin{lem}\label{lemma.monotoneFunctions2}
Let $R$ be a monotone relation. Then for all but countably many $u\in \mathbb{R}$, there is at most one $y\in \mathbb{R}$ such that $(u,y)\in R$.
\end{lem}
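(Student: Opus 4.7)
The plan is to show that the set $D = \{u\in\R: R(u)\text{ has at least two elements}\}$ is countable, where $R(u)=\{y\in\R:(u,y)\in R\}$ denotes the fiber of $R$ over $u$; this is exactly what the lemma asserts. To each $u\in D$ I will associate a nonempty open interval of $\R$, arrange for these intervals to be pairwise disjoint, and then invoke countability of $\mathbb{Q}$ to conclude.

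First I would extract the following consequence of the product form of monotonicity, $(u_2-u_1)(y_2-y_1)\ge 0$: if $u_1<u_2$, then $u_2-u_1>0$ forces $y_2-y_1\ge 0$ for every $y_1\in R(u_1)$ and $y_2\in R(u_2)$, hence $\sup R(u_1)\le \inf R(u_2)$. This ``ordered fibers'' property is the only place monotonicity enters the argument.

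Next, for each $u\in D$ I set $a_u=\inf R(u)$ and $b_u=\sup R(u)$, allowing the values $\pm\infty$. Because $R(u)$ contains two distinct points, $a_u<b_u$, so the open interval $(a_u,b_u)$ is nonempty and therefore contains at least one rational. The separation derived above implies that for any $u_1<u_2$ in $D$, the open intervals $(a_{u_1},b_{u_1})$ and $(a_{u_2},b_{u_2})$ are disjoint. Picking one rational from each interval thus defines an injection $D\hookrightarrow\mathbb{Q}$, and since $\mathbb{Q}$ is countable, so is $D$.

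There is no real obstacle in this proof; the one technical check is the separation $\sup R(u_1)\le\inf R(u_2)$ for $u_1<u_2$, which drops out immediately from the product form of monotonicity. The argument is essentially the classical observation that a bounded monotone real function has at most countably many jump discontinuities, adapted to the setting where a monotone relation is allowed to be multi-valued at certain ``jump'' points.
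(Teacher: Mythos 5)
Your proof is correct, and it takes a genuinely different (though equally classical) route from the paper's. The paper fixes a bounded window $[u_-,u_+]$ on which $R$ is defined and an $\epsilon>0$, and counts: since the fibers are ordered, the ``jumps'' of size at least $\epsilon$ telescope, so at most $(y_+-y_-)/\epsilon$ points in the window can have a fiber of diameter exceeding $\epsilon$; countability then follows from a double limiting process, taking $\epsilon=1/n$ and letting the window expand. You instead assign to each multi-valued point $u$ the open interval $\bigl(\inf R(u),\sup R(u)\bigr)$, use the same ordered-fibers separation $\sup R(u_1)\le\inf R(u_2)$ for $u_1<u_2$ to see that these intervals are pairwise disjoint, and inject into $\mathbb{Q}$. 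Both arguments rest on exactly the same consequence of monotonicity, but yours is shorter, needs no double limit, and handles unbounded fibers and an unbounded domain without any extra care (a half-line still contains a rational, and if some fiber is unbounded above then every later fiber is empty, so disjointness is automatic); the paper's version, by contrast, yields a small quantitative bonus, namely an explicit bound on the number of jumps of a prescribed size inside a prescribed window. Either proof is acceptable here, since the lemma is only used qualitatively in Theorem~\ref{thm.CM_Generalizes_Monotone}.
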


We are now ready to prove the main result. 
\begin{thm}\label{thm.CM_Generalizes_Monotone}
Let $R\subseteq\mathbb{R}\times\mathbb{R}$. Then $R$ is a (strictly) CM relation if and only if it is a (strictly) monotone relation.
\end{thm}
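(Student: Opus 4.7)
The plan is to prove the two implications separately. The direction CM $\Rightarrow$ monotone is immediate by specializing the cyclic inequality (\ref{eq:CM}) to $N=2$: for any $(u_1,y_1),(u_2,y_2)\in R$ one obtains $y_1(u_1-u_2)+y_2(u_2-u_1)\geq 0$, which rearranges to $(y_2-y_1)(u_2-u_1)\geq 0$; this is precisely monotonicity, and in the strict case the inequality is sharp whenever $u_1\neq u_2$, yielding strict monotonicity.

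For the converse, I would reduce to Theorem \ref{thm:Rockafellars}: it suffices to exhibit a convex function $g$ with $R\subseteq\partial g$, since any subset of a maximally CM relation is automatically CM. By Lemma \ref{lemma.monotoneFunctions2}, a monotone $R\subseteq\mathbb{R}\times\mathbb{R}$ is single-valued at all but countably many values of its first coordinate, so after fixing a monotone selection it arises from a monotone function $f$ defined on an interval $I$. Lemma \ref{lemma.monotoneFunctions} then provides a convex primitive $g(x)=\int_{x_0}^{x}f(t)\,dt$ with $f(x)\in\partial g(x)$ wherever $f$ is single-valued.

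The main obstacle is verifying $R\subseteq\partial g$ also at the countably many $u_0$ where $R$ is multi-valued and the chosen selection may miss some $y$-value. The key observation is that in one dimension $\partial g(u_0)$ equals the closed interval $[f(u_0^-),f(u_0^+)]$ determined by the one-sided limits of $f$. Monotonicity of $R$ forces any $y$ with $(u_0,y)\in R$ to satisfy $y'\leq y$ whenever $(u',y')\in R$ with $u'<u_0$, and $y\leq y'$ whenever $(u',y')\in R$ with $u'>u_0$; passing to these one-sided limits places $y$ in $[f(u_0^-),f(u_0^+)]=\partial g(u_0)$, so $R\subseteq\partial g$ and Theorem \ref{thm:Rockafellars} delivers CM. The strict case is handled identically: strict monotonicity of $R$ makes $f$ strictly ascending, the last assertion of Lemma \ref{lemma.monotoneFunctions} yields a strictly convex $g$, and the subgradient inequality for strictly convex functions (the ``if'' direction of Theorem \ref{thm:Rockafellars}) shows that $R\subseteq\partial g$ is SCM.
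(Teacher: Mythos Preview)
Your proposal is correct and follows essentially the same route as the paper: specialize \eqref{eq:CM} to $N=2$ for one direction, and for the converse pick a monotone selection $f$, integrate it via Lemma~\ref{lemma.monotoneFunctions} to obtain a convex $g$, then invoke Theorem~\ref{thm:Rockafellars}. The only real difference lies in how the countably many multi-valued abscissae (supplied by Lemma~\ref{lemma.monotoneFunctions2}) are absorbed into $\partial g$. The paper argues that modifying the selection $f$ on a countable (hence measure-zero) set leaves the integral $g$ unchanged, so every selection---and therefore all of $R$---lands in $\partial g$. You instead compute $\partial g(u_0)=[f(u_0^-),f(u_0^+)]$ directly and sandwich each $y$ with $(u_0,y)\in R$ inside that interval via monotonicity. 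Both arguments are valid; the paper's is slightly slicker because it avoids any discussion of one-sided limits, while yours is more explicit about what $\partial g$ actually looks like at the bad points. One small omission in your write-up: the paper first extends $R$ to a maximal monotone relation so that the selection $f$ is genuinely defined on an interval $I$; you assert ``defined on an interval $I$'' without that step, so you should either add the extension or note that the one-sided limit argument still goes through on a general domain.
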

\begin{proof}
We first show that CM (or SCM) relations are monotone (or strictly monotone). Indeed, take two arbitrary pairs $(u_1,y_1),(u_2,y_2)\in R$ and consider their cyclic sum, 
\[
y_1^T(u_1-u_2) + y_2^T(u_2-u_1) = (y_2-y_1)(u_2-u_1) \geq 0.
\]
This cyclic sum is non-negative (or positive if the $u$'s are different and the relation is SCM), showing that $R$ is monotonic (or strictly monotonic).

Conversely, assume that $R$ is monotonic. We can extend $R$ to a maximal monotonic relation so that the $u$'s can be taken from an interval $I$. For each $u \in I$ we choose some $f(u)$ such that $(u,f(u))\in R$. We consider $f:I\rightarrow \mathbb{R}$ as a function, which is monotonic ascending. By Lemma \ref{lemma.monotoneFunctions} we have some convex function $g$ whose sub-differential contains all pairs $(u,f(u))$. 

However, if we change our choices $f$, then the function $g$ remains unaltered, as Lemma \ref{lemma.monotoneFunctions2} implies that $f$ will only change on a countable set. But we can get all of $R$ by changing the choices $f$, meaning that the sub-differential of $g$ contains $R$. Thus $R$ is cyclically monotonic by Rockefellar's Theorem \ref{thm:Rockafellars}. Furthermore, if the original relation was strictly monotonic, then the function $g$ is strictly convex, and the relation is strictly cyclically monotonic.
\end{proof}

\subsection{Cyclically Monotone Relations in Systems Theory}
Recall that in \cite{SISO_Paper}, the notion of monotone relations was used to establish network convergence. They did so by demanding that the collection of all steady-state pairs of input and output would constitute a monotonic relation. This assumption can be generalized to the case of MIMO agents by requiring that the relations will be cyclically monotonic instead. 
In that paper, the demand for a monotonic steady-state input-output relation was juxtaposed with a passivity requirement (at any steady-state) to assure convergence. Thus, we oblige and do the same here.

\begin{defn}
A system $\Sigma$ is \emph{maximal equilibrium-independent cyclically monotone}  (output strictly) passive (MEICMP) if
\begin{itemize}
\item[i)] for every steady-state input-output pair $(\mathrm{u},\mathrm{y})$, the system $\Sigma$ is (output strictly) passive  with respect to  $\mathrm{u}$ and $\mathrm{y}$;
\item[ii)] the set of all steady-state input-output pairs, $R$, is (strictly) cyclically monotonic;
\item[iii)] there is no strictly monotonic relation $S$ which strictly contains $R$.
\end{itemize}
If $R$ is strictly cyclically monotone, we say that $\Sigma$ is \emph{maximal equilibrium-independent strictly cyclically monotone} (output strictly) passive (MEISCMP).
\end{defn}

Now, suppose that $\Sigma$ is MEICMP. We consider the set-valued map $k(\mathrm{u})$ which evaluates a vector $\mathrm{u}$ to the set of all vectors $\mathrm{y}$ such that ($\mathrm{u,y}$) is a steady-state input-output pair. Then, by assumption, $\{(\mathrm{u,y}):y\in k(\mathrm{u})\}$, is a maximal CM relation. Rockefellar's Theorem implies that there is some convex function $K$ such that $k$ is the sub-differential of $K$.

\begin{exam} \label{exam:NonlinearIntegratorExample}
Consider a nonlinear MIMO integrator $\dot{x}=u, y=\psi(x)$ where $\psi$ is a passive static nonlinearity. The steady-state input-output relation is $\{(0,y): y\in\mathbb{R}^d\}$, which is maximal strictly CM, and passivity follows from the passivity of the nonlinearity $\psi$ \cite{Khalil}. \textcolor{black}{The inregral function in this case is $I_0$, i.e. $I_0(x)=\begin{cases} 0 & x=0 \\ \infty & \text{else}} Thus the system is MEISCMP. In this work, we will be particularly interested in nonlinearities $\psi$ that are the gradient of a strictly convex function $P$.
\end{exam}

\subsection{Steady-State Output Agreement of $(\Sigma,\Pi,\mathcal{G})$ using CM Relations}\label{subsec.ProvingConvergence}
Up to now, we've seen that CM relations are good counterparts to monotone relations. From now on, our goal is to extend this idea to show that MEICMP systems are good counterparts of MEIP systems, proving that they allow analysis and synthesis results in the spirit of \cite{SISO_Paper,LCSS_Paper} for MIMO systems. In the following subsections, we will have a network system as in Figure \ref{ClosedLoop}, and we will assume that the agents are output-strictly MEICMP. 

From now on, we let $k_{i}(\mathrm{u}_i)$ be the input-output steady-state relation of the $i$-th node dynamic system, and let $k(\mathrm{u})$ be the stacked input-output steady-state relation.
The domain of $k_{i}$ will be denoted by $\mathcal{U}_i$, and it's range will be subsets of $\mathcal{Y}_i$. Similarly, the domain of $k$ will be denoted by $\mathcal{U}=\prod_{i=1}^{|\mathbb{V}|} \mathcal{U}_i$, and its range by $\mathcal{Y}=\prod_{i=1}^{|\mathbb{V}|} \mathcal{Y}_i$.

The particular goal of this subsection is to study the \emph{output agreement} problem of the system $(\Sigma, \Pi, \G)$. That is, we would like the outputs $y_i(t)$ of all nodes to converge to the same value. 
Equivalently, all controller inputs $\zeta_e(t)$ should converge to the same value. Thus, a reasonable choice for the controller is a system which only has $0$ as a steady state input, i.e. the nonlinear integrator system from Example \ref{exam:NonlinearIntegratorExample}. This choice is also motivated by \cite{Wieland_Sepulchre_Allgower} which derived the internal model principle for output synchronization of networked systems.

Therefore, for this subsection alone, we make the following assumption. 
\begin{assump} \label{AssumptionOutputSynchro}
The controllers of the network system are MEISCMP and of the form
\begin{align}
\Pi_e:\ \begin{cases}
\dot{\eta}_e = \zeta_e \\
\mu_e = \psi_e (\eta_e) \end{cases}
\end{align}
where $\psi_e$ is the gradient of a \emph{strictly} convex function $P_e$. We further assume that the stacked vector $\eta(0) \in \IM(\E^T)$. 
The stacked controller is $\dot{\eta}=\zeta,\ \mu=\psi(\eta)$ where $\psi=\nabla P$ and $P=\sum_{i=1}^{|\EE|} P_i$ is a strictly convex function.
\end{assump}

With this assumption in place, we study properties of network systems comprised of these MEISCMP agents and controllers.

\subsubsection{Node Dynamics}
First we would like to understand the properties of the steady-state solutions for network systems comprised of MEICMP agents.  Indeed, in steady-state we require that $\dot{x}_i=0$ and $\dot{\eta}_e=0$ for all agents and controllers.  This leads to the following result.
\begin{prop}\label{prop.netfeasible}
If the network system $(\Sigma, \Pi, \mathcal{G})$ admits a steady-state solution, then the steady-state solution $\mathrm{u,y}$ satisfies $\mathrm{y} \in k(\mathrm{u})$, $\mathrm{u} \in \IM(\mathcal{E})$, and $\mathrm{y} \in \ker(\mathcal{E}^T)$.
\end{prop}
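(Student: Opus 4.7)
The plan is to unpack the three conclusions one by one, each as a direct consequence of a different structural piece of the closed-loop description, using the fact that at a steady state all time-derivatives vanish simultaneously.

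First I would note that $\mathrm{y}\in k(\mathrm{u})$ is essentially a restatement of what it means for $(\mathrm{u},\mathrm{y})$ to be a steady-state input-output pair of the stacked agent dynamics $\Sigma$: since $k_i$ is by definition the steady-state input-output relation of $\Sigma_i$, and $k=\prod_i k_i$ is just the componentwise stacking, the agent part of the steady-state assumption gives $\mathrm{y}_i\in k_i(\mathrm{u}_i)$ for each $i\in\V$, hence $\mathrm{y}\in k(\mathrm{u})$.

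Next I would obtain $\mathrm{u}\in\IM(\E)$ directly from the interconnection. The closed loop imposes $u(t)=-\E\mu(t)$ at all times; evaluating at the steady-state values $\mathrm{u}=-\E\mathrm{\mu}$ exhibits $\mathrm{u}$ as an element of $\IM(\E)$ (up to sign, which does not affect the image since $\E(-\mathrm{\mu})\in\IM(\E)$).

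The third conclusion is the step that actually uses Assumption \ref{AssumptionOutputSynchro}, and this is the only nontrivial observation. Under that assumption the edge controllers are pure integrators, so $\dot{\eta}_e=\zeta_e$. At a steady state we must have $\dot{\eta}_e=0$ for every edge $e$, whence $\mathrm{\zeta}_e=0$ for every $e$, i.e., $\mathrm{\zeta}=0$. Combining with the network-side relation $\mathrm{\zeta}=\E^T\mathrm{y}$ yields $\E^T\mathrm{y}=0$, that is, $\mathrm{y}\in\ker(\E^T)$, as desired.

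There is no real obstacle here; the proposition is essentially a bookkeeping statement that translates the steady-state conditions of Proposition \ref{prop.4Requirements} into three necessary conditions on $(\mathrm{u},\mathrm{y})$. The only subtle point is recognizing that the kernel condition $\mathrm{y}\in\ker(\E^T)$ is a consequence not of the agents being MEICMP but of the integrator form of the controllers in Assumption \ref{AssumptionOutputSynchro}; without that form of $\Pi_e$, one would only be able to conclude that $\E^T\mathrm{y}$ lies in the set of steady-state inputs of the controllers, not that it is zero.
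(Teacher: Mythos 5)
Your proof is correct and follows essentially the same route as the paper's: the paper likewise reads off $\mathrm{y}\in k(\mathrm{u})$ from the agent steady-state criterion, $\mathrm{y}\in\ker(\E^T)$ from the controller steady-state criterion $\dot{\eta}_e=0$ (which forces $\zeta_e=0$ under the integrator form of Assumption \ref{AssumptionOutputSynchro}), and $\mathrm{u}\in\IM(\E)$ from the interconnection $u(t)=-\E\mu(t)$. Your closing observation that the kernel condition hinges on the integrator structure of the controllers, rather than on any passivity property of the agents, is accurate and consistent with the paper's setup.
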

\begin{proof}
The steady-state criterion for both the dynamics and the controller implies that $\mathrm{y}\in k(\mathrm{u})$ and that $\mathrm{y}\in\ker(\mathcal{E}^T)$. The definition $u(t)=-\mathcal{E}\mu(t)$ implies that $u(t)\in \IM(\mathcal{E})$, and thus $\mathrm{u}\in \IM(\mathcal{E})$. 
\end{proof}
We are now able to relate these steady-state solutions to the solutions of a network optimization problem.  Recall that the integral of the steady-state input-output relation for system $i$, $k_i(\mathrm{u}_i)$, is convex.  In this direction, let $K_i:\mathbb{R}^d\rightarrow\mathbb{R}$ be such that the sub-differential of $K_i$ satisfies $\partial K_i=k_i$.  From Proposition \ref{prop.netfeasible}, we must have that $\mathrm{u} \in \IM(\mathcal{E})$, and thus we can interpret the steady-state control input as a network divergence, and the function $K_i$ as a divergence cost.  

Similarly, for the controllers defined in Assumption \ref{AssumptionOutputSynchro}, the input-output relation for each edge can be defined as $\gamma=\{(0,\mathrm{y})|\mathrm{y}\in(\mathbb{R}^d)^{|\V|}\}$, i.e., the MIMO integrator.  The integral function of $\gamma_e$ is $\Gamma_e = I_0$, and the controller variables $\zeta$ may be interpreted as tensions in the network.

Given the functions $K_i$ and $\Gamma_e$ defined above, we can also define their convex conjugates.  For example, $K^{\star}_i(\mathrm{y_i}) = \sup_{\mathrm{u}_i\in\mathcal{U}_i} \{y_i^Tu_i - K_i(\mathrm{u}_i)\}$. 
We denote $K(\mathrm{u}) = \sum_{i=1}^{|\mathbb{V}|} K_i(\mathrm{u}_i)$ and $K^{\star}(\mathrm{y}) = \sum_{i=1}^{|\mathbb{V}|} K_i^{\star}(\mathrm{y}_i)$. We are now prepared to present one of the main results.
\begin{thm}[Inverse Optimality of Output Agreement] \label {InverseOptimality}
Consider a network system $(\Sigma, \Pi, \G)$, in which all nodal systems are MEICMP and the controllers satisfy Assumption \ref{AssumptionOutputSynchro}. Let $\mathrm{u,y}$ be a steady-state input-output pair. Then
\begin{enumerate} 
\item $\mathrm{u}$ is an optimal solution of ({OFP}).
\item $\mathrm{y}$ is an optimal solution of ({OPP}).
\item $K(\mathrm{u}) + K^{\star}(\mathrm{y}) = 0$. 
\end{enumerate}
\end{thm}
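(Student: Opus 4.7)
The plan is to establish the identity $K(\mathrm{u})+K^\star(\mathrm{y})=0$ first, and then deduce optimality of $\mathrm{u}$ for (OFP) and $\mathrm{y}$ for (OPP) as consequences via weak duality between the two problems. The key ingredients are the Fenchel--Young (in)equality and the orthogonality between $\IM(\mathcal{E})$ and $\ker(\mathcal{E}^T)$.

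First I would invoke Proposition \ref{prop.netfeasible} to extract the three defining properties of the steady-state pair: $\mathrm{y}\in k(\mathrm{u})$, $\mathrm{u}\in\IM(\mathcal{E})$, and $\mathrm{y}\in\ker(\mathcal{E}^T)$. Under Assumption \ref{AssumptionOutputSynchro} the edge integral function is $\Gamma=I_0$ and its Legendre conjugate satisfies $\Gamma^\star\equiv 0$, so (OFP) collapses to minimizing $K(\mathrm{u}_0)$ over $\mathrm{u}_0\in\IM(\mathcal{E})$, and (OPP) collapses to minimizing $K^\star(\mathrm{y}_0)$ over $\mathrm{y}_0\in\ker(\mathcal{E}^T)$. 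The last two properties above therefore express precisely that $\mathrm{u}$ and $\mathrm{y}$ are feasible for their respective problems, with witness $\mu$ satisfying $\mathrm{u}=-\mathcal{E}\mu$ and with $\zeta=0$.

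Next, the MEICMP hypothesis together with Rockafellar's Theorem (Theorem \ref{thm:Rockafellars}) yields $k=\partial K$, so that $\mathrm{y}\in\partial K(\mathrm{u})$. The standard Fenchel--Young equality characterizes subgradients by the identity $K(\mathrm{u})+K^\star(\mathrm{y})=\mathrm{u}^T\mathrm{y}$ whenever $\mathrm{y}\in\partial K(\mathrm{u})$. Using $\mathrm{u}=-\mathcal{E}\mu$ and $\mathcal{E}^T\mathrm{y}=0$ from the preceding step, I compute $\mathrm{u}^T\mathrm{y}=-\mu^T\mathcal{E}^T\mathrm{y}=0$, which proves claim (3).

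For the optimality claims I would close the argument with weak duality. For any feasible $\mathrm{u}_0$ of (OFP) and any feasible $\mathrm{y}_0$ of (OPP), the Fenchel--Young inequality gives $K(\mathrm{u}_0)+K^\star(\mathrm{y}_0)\ge \mathrm{u}_0^T\mathrm{y}_0$, and the same orthogonality calculation as above yields $\mathrm{u}_0^T\mathrm{y}_0=0$, so $K(\mathrm{u}_0)+K^\star(\mathrm{y}_0)\ge 0=K(\mathrm{u})+K^\star(\mathrm{y})$. Setting $\mathrm{y}_0=\mathrm{y}$ gives $K(\mathrm{u}_0)\ge K(\mathrm{u})$ for every feasible $\mathrm{u}_0$, so $\mathrm{u}$ minimizes (OFP); setting $\mathrm{u}_0=\mathrm{u}$ gives $K^\star(\mathrm{y}_0)\ge K^\star(\mathrm{y})$ for every feasible $\mathrm{y}_0$, so $\mathrm{y}$ minimizes (OPP). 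I do not expect any single step to be the main obstacle; the crux is simply recognizing that the network-level orthogonality $\mathrm{u}^T\mathrm{y}=0$, combined with Fenchel--Young, reduces the theorem to a clean duality argument.
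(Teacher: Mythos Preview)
Your proof is correct and follows essentially the same route as the paper: establish feasibility from the steady-state structure, use $\mathrm{y}\in\partial K(\mathrm{u})$ together with the orthogonality $\mathrm{u}^T\mathrm{y}=0$ to obtain $K(\mathrm{u})+K^\star(\mathrm{y})=0$, and then conclude optimality by weak duality. The only cosmetic difference is that the paper packages the final step as an appeal to a prior lemma (Corollary~\ref{cor.feasopt}, the special case $\Gamma=I_0$ of Lemma~\ref{lem:ZeroEquationLemma}), whereas you reprove that weak-duality step inline via the Fenchel--Young inequality; the content is identical.
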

\begin{rem}
In this case, we have $\Gamma(\zeta)=I_0$, forcing $\zeta$ to be equal to 0 when solving {(OPP)}. Similarly, we have $\Gamma^\star(\mu)=0$, so we do not care about the value of $\mu$ when solving {(OFP)}. Thus, we may abuse the terminology and talk about $\mathrm u$ and $\mathrm y$ as solutions of {(OFP)} and {(OPP)}.
\end{rem}
We now prove the theorem:
\begin{proof}
As $\mathrm{u,y}$ are steady-state input-output pairs, we have that $\mathcal{E}^T\mathrm{y} = 0$, and for the appropriate steady-state $\mathrm{\mu}$ we have $\mathrm{u}+\mathcal{E}\mathrm{\mu} = 0$, i.e. both $\mathrm{u}$ and $\mathrm{y}$ are feasible for (OFP) and (OPP), correspondingly.\\
Now, by definition of the input-output steady-state relation, $\mathrm{y}\in k(\mathrm{u})$. Because $\partial K = k$, we know that the supremum defining the dual function $K^{\star}(\mathrm{y})$ is achieved at $\mathrm{u}$ \cite{Rockafellar1}. Thus, $\mathrm{u^T y} = 0$ implies
$$K^\star(\mathrm{y}) = \mathrm{u^T y} - K(\mathrm{u}) = -K(\mathrm{u}),$$
and by Corollary \ref{cor.feasopt} the result is proved.  
\end{proof}

This connection between steady-state input-output pairs and the convex optimization problems gives us a path to understanding the steady states the system has traversing through techniques of convex analysis. In particular, we can prove existence and uniqueness of solutions by examining certain properties of the map $k$.

\begin{cor}
Suppose all node dynamics in the system are maximally equilibrium-independent cyclically-monotone passive, and that $\mathcal{U}_i = \mathbb{R}^d = \mathcal{Y}_i$. Then an output agreement steady-state exists.
\end{cor}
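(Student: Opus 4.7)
The plan is to produce a steady-state by showing that the reduced version of (OPP) has a minimizer, and then reading off the steady-state from the optimality condition. Under Assumption \ref{AssumptionOutputSynchro} we have $\Gamma(\zeta)=I_0(\zeta)$, so any feasible point of (OPP) satisfies $\mathcal{E}^T\mathrm{y}=0$. Since $\ker(\mathcal{E}^T)=\mathbf{1}\otimes\R^d$, (OPP) reduces to the unconstrained problem
\[
\min_{y_\ast\in\R^d}\ \Phi(y_\ast),\qquad \Phi(y_\ast):=\sum_{i=1}^{|\V|} K_i^\star(y_\ast),
\]
where each $K_i^\star$ is the Legendre conjugate of the integral function $K_i$ of $k_i$.

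The first substantive step is coercivity. The assumption $\mathcal{U}_i=\R^d$ states that the domain of $k_i$ is all of $\R^d$, so $K_i$ is a proper convex function that is finite on all of $\R^d$. A standard fact in convex analysis (see Rockafellar, Corollary 13.3.1) then gives that $K_i^\star$ is \emph{super-coercive}, that is $K_i^\star(y)/\|y\|\to\infty$ as $\|y\|\to\infty$. Symmetrically, the assumption $\mathcal{Y}_i=\R^d$ guarantees $K_i^\star$ is finite everywhere, and hence continuous on $\R^d$. Summing over $i$, $\Phi$ is a continuous, convex, super-coercive function on $\R^d$, so it attains its infimum at some $y_\ast^\ast$.

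The next step is to pass from the minimizer back to the primal objects. Because every $K_i^\star$ is finite on $\R^d$, the Moreau–Rockafellar subdifferential sum rule applies, and the optimality condition $\mathbf{0}\in\partial\Phi(y_\ast^\ast)$ gives
\[
\mathbf{0}\in \sum_{i=1}^{|\V|} \partial K_i^\star(y_\ast^\ast)=\sum_{i=1}^{|\V|} k_i^{-1}(y_\ast^\ast).
\]
Thus there exist $\mathrm{u}_i\in k_i^{-1}(y_\ast^\ast)$ with $\sum_i \mathrm{u}_i=0$. Setting $\mathrm{y}=\mathbf{1}\otimes y_\ast^\ast$ and $\mathrm{u}=(\mathrm{u}_i)_{i\in\V}$, we obtain $\mathrm{y}\in k(\mathrm{u})$, $\mathcal{E}^T\mathrm{y}=0$, and $\mathrm{u}\in\IM(\mathcal{E})$ (because its node-blocks sum to zero). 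Picking any $\mu$ with $-\mathcal{E}\mu=\mathrm{u}$ and $\zeta=0$, Proposition \ref{prop.4Requirements} certifies that $(\mathrm{u},\mathrm{y},\zeta,\mu)$ is a steady-state of $(\Sigma,\Pi,\G)$ with output agreement $y_\ast^\ast$.

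The main obstacle is the coercivity step: without the hypothesis $\mathcal{U}_i=\R^d=\mathcal{Y}_i$, neither $K_i$ nor $K_i^\star$ would be guaranteed to be finite everywhere, the reduction to an unconstrained problem would require handling the effective domains of the $K_i^\star$, and $\Phi$ could fail to attain its minimum (e.g., if $k_i$ is the identity relation, $K_i^\star=\tfrac{1}{2}\|\cdot\|^2$ is coercive, but for $k_i(\cdot)\equiv\{0\}$, $K_i^\star\equiv 0$ and attains its minimum trivially, yet neither covers the pathologies that arise when the steady-state domain/range is a strict subset of $\R^d$). The double hypothesis $\mathcal{U}_i=\R^d=\mathcal{Y}_i$ is precisely what activates the primal/dual finiteness--super-coercivity equivalence that drives the existence proof.
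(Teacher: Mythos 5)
Your proof is correct, and in outline it follows the same route as the paper: both arguments go through the optimal potential problem, obtain an optimal solution, and read the output-agreement steady-state off the optimality/duality conditions. The substantive difference is in how the existence of an optimizer is established. The paper's proof simply states that the range and domain assumptions make (OPP) and (OFP) \emph{feasible} and then lets $(\mathrm{u},\mathrm{y})$ be optimal dual solutions; feasibility alone does not guarantee that the infimum is attained, so this step is elided in the paper. Your coercivity argument -- $\mathcal{U}_i=\R^d$ forces $K_i$ to be finite everywhere, hence $K_i^\star$ is super-coercive by conjugate duality, while $\mathcal{Y}_i=\R^d$ makes $K_i^\star$ finite and continuous, so the reduced objective $\Phi$ attains its minimum -- is exactly the missing ingredient, and it also explains \emph{why} the double hypothesis $\mathcal{U}_i=\R^d=\mathcal{Y}_i$ is needed rather than merely making the problems feasible. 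The subsequent passage from $\mathbf{0}\in\sum_i k_i^{-1}(y_\ast^\ast)$ to an explicit 4-tuple satisfying Proposition \ref{prop.4Requirements} is also cleaner than the paper's appeal to duality of optimal solutions. Two small remarks: the reduction of (OPP) to a problem over a single agreement value $y_\ast\in\R^d$ uses $\ker(\E^T)=\mathbf{1}\otimes\R^d$, which requires the graph to be connected (an assumption the paper makes implicitly as well); and in your closing aside, for the relation $k_i(\cdot)\equiv\{0\}$ the conjugate is $K_i^\star=I_0$ up to a constant rather than $K_i^\star\equiv 0$, though this does not affect the argument since that relation violates $\mathcal{Y}_i=\R^d$ anyway.
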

\begin{proof}
The range and domain assumption imply that both (OFP) and (OPP) are feasible. Let $(\mathrm{u,y})$ be optimal dual solutions to (OFP) and (OPP). Duality of the solutions gives that $\mathrm{y}\in k(\mathrm{u})=\partial K(\mathrm{u})$. Also, because $\mathrm{y}$ is feasible for (OPP), we must have $\E^T\mathrm{y}=0$, meaning that  $\mathrm{y}$ describes an output agreement. Thus we have a steady-state input-output pair describing output agreement.
\end{proof}

\begin{cor} \label{Uniqueness}
Assume that the nodal systems are maximally equilibrium-independent strictly cyclically monotone passive. Suppose furthermore that the input-output steady state-relations $k_i$ are functions. 
Even further, suppose that whenever $u^{1},u^{2},...$ is a sequence of points in $\mathcal{U}_i$ converging to a boundary point of $\mathcal{U}_i$, we have $\lim_{\ell\rightarrow\infty} |k_i(u^{\ell})|=\infty$. Then there exists no more than one steady-state input-output pair $(\mathrm{u},\mathrm{y})$.
\end{cor}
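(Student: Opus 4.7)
The plan is to reduce uniqueness of the steady-state input-output pair to uniqueness of the minimizer of a strictly convex optimization problem. By the Inverse Optimality Theorem~\ref{InverseOptimality}, any steady-state input-output pair $(\mathrm u,\mathrm y)$ has $\mathrm u$ optimal for (OFP); under Assumption~\ref{AssumptionOutputSynchro} we have $\Gamma^\star\equiv 0$, so (OFP) collapses to minimizing $K(\mathrm u)$ over the linear subspace $\IM(\E)$ intersected with the effective domain $\mathcal U$. Uniqueness of the steady-state input-output pair will therefore follow once we pin down $\mathrm u$, because the hypothesis that each $k_i$ is a (single-valued) function then forces $\mathrm y=k(\mathrm u)$.

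The key structural ingredient is strict convexity of the objective. Since each agent is MEISCMP, each relation $k_i$ is maximal \emph{strictly} cyclically monotonic, and Rockafellar's Theorem~\ref{thm:Rockafellars} gives that each integral $K_i$ is strictly convex on its domain $\mathcal U_i$; hence $K=\sum_i K_i$ is strictly convex on the convex set $\mathcal U=\prod_i \mathcal U_i$. A standard midpoint argument then closes the uniqueness step: if two distinct minimizers $\mathrm u^1\neq \mathrm u^2$ both lay in the convex feasible set $\IM(\E)\cap \mathcal U$, their midpoint $\tfrac12(\mathrm u^1+\mathrm u^2)$ would remain feasible and, by strict convexity, achieve a strictly smaller value of $K$, contradicting optimality. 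Thus $\mathrm u$ is unique, and then $\mathrm y=k(\mathrm u)$ is unique as well.

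The step I expect to require the most care — and the reason the coercivity-type hypothesis $\lim_{\ell\to\infty} |k_i(u^\ell)|=\infty$ at boundary points of $\mathcal U_i$ is present — is making sure the midpoint argument is actually applicable. Strict convexity on $\mathcal U_i$ only delivers a strict inequality at points where $K_i$ is finite, so we must know that both candidate minimizers lie in the effective domain rather than escaping to its boundary. The stated coercivity condition makes each $K_i$ essentially smooth in the sense of Rockafellar (its gradient blows up as one approaches $\partial \mathcal U_i$), which forces any minimizer of $K$ on $\IM(\E)\cap \mathcal U$ to sit in the relative interior of $\mathcal U$; the midpoint of two such interior points then stays inside the effective domain, and the strict convexity bound becomes a valid strict inequality. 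Once this regularity is in place, the contradiction goes through and the corollary is established.
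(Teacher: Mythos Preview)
Your approach is correct and matches the paper's: both reduce to uniqueness of the (OFP) minimizer via strict convexity of $K$, with the paper's own proof being a terse two-liner citing Rockafellar (the hypotheses make $K$ differentiable and essentially smooth, strict convexity then forces at most one (OFP) solution, and the dual inherits a solution). One minor point: the midpoint argument does not actually require essential smoothness---strict convexity of $K$ on its convex effective domain $\mathcal U$ already yields the strict inequality for any two distinct points with finite $K$-value, and minimizers automatically have finite value, so the contradiction goes through without locating them in the interior. The coercivity hypothesis is there in the paper's version to make the primal--dual correspondence behave well (existence of the dual solution once the primal has one), whereas your direct recovery $\mathrm y=k(\mathrm u)$ from single-valuedness of $k$ is cleaner and sidesteps that machinery.
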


\begin{proof}
The assumptions yield that the function $K$ are differentiable and essentially smooth convex functions \cite[p.~251]{Rockafellar1}. Thus, strict convexity implies that (OFP) has at most one solution, and if such a solution exists, the dual problem has a solution.
\end{proof}

\begin{cor}
Assume the same assumptions as Corollary \ref{Uniqueness}. If an output agreement steady-state exists, the agreement value $\beta$ satisfies
\begin{equation}
\sum_{i=1}^{|\mathbb{V}|} k_i^{-1}(\beta) = 0.
\end{equation}
\end{cor}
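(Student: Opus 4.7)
The plan is to unpack what ``output agreement steady-state with agreement value $\beta$'' means at the level of the individual agent steady-state relations, invert the relations $k_i$ pointwise, and then read off the sum from the network interconnection constraint $\mathrm{u}=-\E\mu$.

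First I would observe that an output agreement steady-state means that the stacked output vector has the form $\mathrm{y}=\mathbf{1}_{|\V|}\otimes\beta$, so $\mathrm{y}_i=\beta$ for every $i\in\V$. The hypotheses of Corollary \ref{Uniqueness} say the agents are maximally equilibrium-independent strictly cyclically monotone and that each $k_i$ is a function; combined with Theorem \ref{thm:Rockafellars}, $K_i$ is strictly convex and $k_i=\nabla K_i$, so $k_i$ is injective on its domain. Consequently $k_i^{-1}$ is a well-defined (single-valued) map on $\IM(k_i)$, and from $\mathrm{y}_i=k_i(\mathrm{u}_i)=\beta$ we get $\mathrm{u}_i=k_i^{-1}(\beta)$ for each $i$.

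Next I would invoke the steady-state consistency conditions from Proposition \ref{prop.4Requirements}: there exists $\mu$ such that $\mathrm{u}=-\E\mu$, so $\mathrm{u}\in\IM(\E)$. Recall that the null space of $\E^T$ equals the span of vectors of the form $[v^T,\dots,v^T]^T$, i.e.\ $\ker(\E^T)=\IM(\mathbf{1}_{|\V|}\otimes\Id_d)$, so $\IM(\E)=\ker(\E^T)^{\perp}$ is orthogonal to the constant vectors. Taking inner product of $\mathrm{u}=-\E\mu$ with $\mathbf{1}_{|\V|}\otimes e_j$ for each standard basis vector $e_j\in\R^d$ gives $\sum_{i=1}^{|\V|}\mathrm{u}_i=0$ in $\R^d$.

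Finally I would just combine the two displays: substituting $\mathrm{u}_i=k_i^{-1}(\beta)$ into $\sum_i \mathrm{u}_i=0$ yields
\begin{equation*}
\sum_{i=1}^{|\V|} k_i^{-1}(\beta) = 0,
\end{equation*}
which is the claimed identity. The only delicate point in the whole argument is the well-definedness of $k_i^{-1}$ as a function, and this is exactly what the strict-cyclic-monotonicity half of the hypothesis of Corollary \ref{Uniqueness} buys us; everything else is the tautological content of ``output agreement'' plus the observation that $\IM(\E)$ annihilates the all-ones (block-)vector.
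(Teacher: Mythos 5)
Your proof is correct, but it takes a genuinely different route from the paper's. The paper derives the identity from the network optimization side: by the inverse optimality result, the agreement steady-state $\mathrm y=[\beta^T,\dots,\beta^T]^T$ minimizes $K^\star$ over the agreement subspace; Theorem 26.1 of \cite{Rockafeller} (essential smoothness under the stated boundary blow-up condition on $k_i$) guarantees $K^\star$ is differentiable there, and differentiating $\beta\mapsto\sum_i K_i^\star(\beta)$ and setting the gradient to zero gives $\sum_i k_i^{-1}(\beta)=0$ as the first-order optimality condition. You instead argue entirely at the level of the interconnection: $\mathrm u_i=k_i^{-1}(\beta)$ by invertibility of $k_i$ (which strict cyclic monotonicity of a single-valued $k_i$ does deliver), and $\sum_i\mathrm u_i=0$ because $\mathrm u=-\E\mu$ and $\mathbf 1^T E=0$, so the cut space annihilates block-constant vectors. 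Your argument is more elementary --- it bypasses the inverse optimality theorem, conjugate duality, and the differentiability of $K^\star$ altogether, and in fact does not use the boundary condition in the hypotheses of Corollary \ref{Uniqueness} at all. What the paper's derivation buys in exchange is that it exhibits the equation as the stationarity condition of the reduced potential problem in the variable $\beta$, which is the form needed elsewhere for arguing existence and uniqueness of the agreement value; your computation establishes only the stated necessity. Both proofs are valid.
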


\begin{proof}
Theorem 26.1 from \cite{Rockafeller} implies that $\nabla K^{\star}(\mathrm{y_i})$ exists. Replacing $\mathrm{y}$ with the vector $[\beta^T,\beta^T,\ldots,\beta^T]^T$ in $K^\star(y)$, differentiating and equating to 0 gives the desired equation.
\end{proof}

\subsubsection{ The Controllers}
As before, our goal is to understand the properties of steady-state solutions of the closed-loop system. Proposition \ref{prop.netfeasible} characterizes the steady-state inputs for the node systems. The requirement that $\dot{\eta_e}=0$ in steady-state for all controllers leads to the following result.

\begin{prop}\label{prop.contfeasible}
Suppose that Assumption \ref{AssumptionOutputSynchro} holds true. If the network system $(\Sigma, \Pi, \mathcal{G})$ admits a steady-state solution $\mathrm{u,y},\eta$, then the steady-state solution $\mathrm{u},\eta$ satisfies $\eta\in\IM(\E^T)$ and $\mathrm{u}=-\E\psi(\eta)$.
\end{prop}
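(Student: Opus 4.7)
The plan is to split the proof into the two separate claims: the algebraic identity $\mathrm{u}=-\E\psi(\eta)$, which follows immediately from the static interconnection equations, and the subspace membership $\eta\in\IM(\E^T)$, which requires a forward-invariance argument that crucially invokes the initial-condition hypothesis in Assumption~\ref{AssumptionOutputSynchro}.

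For $\mathrm{u}=-\E\psi(\eta)$, I would simply note that the interconnection imposes $u(t)=-\E\mu(t)$ for every $t\ge 0$ and that the controller read-out equation in Assumption~\ref{AssumptionOutputSynchro} gives $\mu(t)=\psi(\eta(t))$. At a steady-state all these signals are constant, so combining the two identities gives $\mathrm{u}=-\E\mu=-\E\psi(\eta)$.

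For $\eta\in\IM(\E^T)$, the key observation is that $\IM(\E^T)$ is forward-invariant under the controller dynamics. Along any closed-loop trajectory we have $\dot\eta(t)=\zeta(t)=\E^T y(t)\in\IM(\E^T)$, so integrating yields
\[
\eta(t)-\eta(0)=\int_0^t \E^T y(s)\,ds=\E^T\!\int_0^t y(s)\,ds\in\IM(\E^T).
\]
Since Assumption~\ref{AssumptionOutputSynchro} stipulates $\eta(0)\in\IM(\E^T)$ and $\IM(\E^T)$ is a linear subspace, we obtain $\eta(t)\in\IM(\E^T)$ for all $t\ge 0$. Because $\IM(\E^T)$ is a closed subset of $\R^{d|\mathbb{E}|}$, the steady-state limit $\eta=\lim_{t\to\infty}\eta(t)$ lies in $\IM(\E^T)$ as well.

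The only conceptual subtlety, and the aspect that justifies flagging the hypothesis $\eta(0)\in\IM(\E^T)$ in Assumption~\ref{AssumptionOutputSynchro}, is that the conclusion $\eta\in\IM(\E^T)$ is \emph{not} intrinsic to the pointwise equilibrium equations $\zeta=0$ and $\mathrm{u}=-\E\psi(\eta)$: one could add any element of $\ker(\E)$'s orthogonal complement issues aside, any constant perturbation to $\eta$ that does not alter $\psi(\eta)$'s contribution through $\E$ would formally satisfy those pointwise relations. The forward-invariance argument above is precisely what rules out such spurious components and pins $\eta$ to the subspace $\IM(\E^T)$, which is also the subspace on which $\Gamma^\star$ and its conjugate will be controlled in the subsequent optimization-based analysis.
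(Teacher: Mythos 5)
Your proof is correct and follows essentially the same route as the paper: $\mathrm{u}=-\E\psi(\eta)$ comes from the interconnection equation $u(t)=-\E\mu(t)$ together with the read-out $\mu(t)=\psi(\eta(t))$, and $\eta\in\IM(\E^T)$ comes from the initial-condition hypothesis in Assumption~\ref{AssumptionOutputSynchro}. You are in fact slightly more careful than the paper, which simply asserts that the assumption places the whole signal $\eta(t)$ in $\IM(\E^T)$, whereas you explicitly derive the forward invariance of that subspace by integrating $\dot\eta=\E^Ty$ from $\eta(0)\in\IM(\E^T)$.
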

\begin{proof}
First note that in steady-state, we have $\E^T\mathrm{y}=0$, meaning that the steady-state controller input $\zeta$ nulls and the controller state $\eta$ is constant.
Assumption \ref{AssumptionOutputSynchro} states that the signal $\eta(t)$ lies in $\IM(\E^T)$, implying that $\eta\in\IM(\E^T)$. Furthermore, the definitions $\mu(t)=\psi(\eta(t))$ and $u(t)=-\E\mu(t)$ imply that $\mathrm{u}=-\E\psi(\eta)$.
\end{proof}

Note that given a steady-state input $\mathrm u$ to the closed-loop system, Proposition \ref{prop.contfeasible} gives a criterion for $\eta$ to be a steady-state producing it. Because $\mathrm u$ is usually unknown when designing the controller, our goal is to find a controller design allowing the conditions in Proposition \ref{prop.contfeasible} to be fulfilled, not matter the value of $\mathrm u\in\IM(\E)$. 

We wish to connect the controller dynamics to network optimization problems. The (OPP) and (OFP) problems are slightly augmented, as using $\eta$ is more beneficial than using $\zeta$ for the output agreement case. Specifically, we recall that $P$ is an integral function of $\psi$, and we consider $\mathrm{u}$ as a fixed constant vector. 

\begin{center}
\begin{tabular}{ c||c }
 \textbf{Controller Form of (OPP)}  & \textbf{Controller Form of (OFP)}   \\
 (COPP) & (COFP)  \\\hline
 $ \begin{array}{cl} \underset{\mathrm{\eta,\mathrm{v}}}{\min} &P(\eta)+ \mathrm{u^Tv}\\
s.t.&{\E}^T\mathrm{v} = \eta 
\end{array} $&  $ \begin{array}{cl}\underset{\mathrm{\mu}}{\min}& P^\star(\mu) \\
s.t. &\mathrm{u} = -{\E}\mu.
\end{array} $ 
\end{tabular}
\end{center}

The problems above can be seen to be dual. Furthermore, we can fix some vector $\mathrm m$ such that $-\E\mathrm m=\mathrm{u}$, and then (COPP) can be written as a function of $\eta$ alone, as $\mathrm{u^Tv}=-\mathrm{m}^T\E^T\mathrm{v}=-\mathrm{m}^T\eta$. Thus we can talk about $\eta$ being a solution to (COPP).  We can now prove the main result regarding the controller design.
\begin{thm} (Controller Realization)
Consider a network system $(\Sigma, \Pi, \G)$, satisfying all the conditions in Theorem \ref{InverseOptimality}. Suppose furthermore that Assumption \ref{AssumptionOutputSynchro} holds. Then the network has an output agreement steady-state. 

Let $\mathrm{\eta}$ be the steady-state of the controller in this output agreement. Then:\begin{enumerate} 
\item $\mathrm{\eta}$ is an optimal solution of (COFP).
\item $\mathrm{\mu}=\psi(\eta)$ is an optimal solution of (COFP).
\item $P(\mathrm{\eta}) + P^{\star}(\mathrm{\mu}) = \mathrm{\mu}^T\mathrm{\eta}$. 
\end{enumerate}\end{thm}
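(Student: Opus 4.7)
The plan is to lift Proposition \ref{prop.contfeasible}, which characterizes the steady-state controller state $\eta$, into the convex-analytic framework of (COPP)/(COFP), and then establish the three claims via a short weak-duality argument in the spirit of Lemma \ref{lem:ZeroEquationLemma}. First, existence of an output-agreement steady state follows from Theorem \ref{InverseOptimality} together with Assumption \ref{AssumptionOutputSynchro}: that theorem yields a steady-state input-output pair $(\mathrm u,\mathrm y)$ with $\E^T\mathrm y = 0$, and since $\dot\eta = \zeta = \E^T y$ combined with $\eta(0)\in\IM(\E^T)$ forces $\eta(t)\in\IM(\E^T)$ for all $t$, the limiting controller state $\eta$ also lies in $\IM(\E^T)$.

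Next, I would verify feasibility directly from Proposition \ref{prop.contfeasible}. Because $\eta\in\IM(\E^T)$, there exists some $\mathrm v$ with $\E^T\mathrm v = \eta$, so $(\eta,\mathrm v)$ is feasible for (COPP); recall that once we fix any $\mathrm m$ with $-\E\mathrm m = \mathrm u$, the reduced objective $P(\eta) - \mathrm m^T\eta$ depends only on $\eta$, so the ambiguity in $\mathrm v$ is harmless. Because $\mathrm u = -\E\psi(\eta)$, the choice $\mu = \psi(\eta)$ satisfies $\mathrm u = -\E\mu$, so $\mu$ is feasible for (COFP). Claim 3 is then immediate from Young's equality applied to $P$: since $\mu = \psi(\eta) = \nabla P(\eta)\in\partial P(\eta)$, we have $P(\eta) + P^\star(\mu) = \mu^T\eta$.

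To conclude optimality (claims 1 and 2), I would replay the weak-duality bookkeeping of Lemma \ref{lem:ZeroEquationLemma}. For any feasible $(\hat\eta,\hat{\mathrm v})$ of (COPP) and any feasible $\hat\mu$ of (COFP), the conversion identity gives
\begin{equation*}
\mathrm u^T\hat{\mathrm v} + \hat\mu^T\hat\eta \;=\; \mathrm u^T\hat{\mathrm v} + \hat\mu^T\E^T\hat{\mathrm v} \;=\; (\mathrm u+\E\hat\mu)^T\hat{\mathrm v} \;=\; 0,
\end{equation*}
so the sum of the two objectives reduces to $P(\hat\eta) + P^\star(\hat\mu) - \hat\mu^T\hat\eta$, which is nonnegative by Young's inequality. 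Our candidate $(\eta,\mu)$ already drives this sum to zero by claim 3, so it simultaneously minimizes each problem, establishing claims 1 and 2 (with the minor correction that claim 1 targets (COPP)). The only subtle point I foresee is the invariance $\eta(t)\in\IM(\E^T)$: this needs the explicit initialization in Assumption \ref{AssumptionOutputSynchro}, together with the observation that $\zeta = \E^T y\in\IM(\E^T)$ propagates this subspace under the integrator dynamics.
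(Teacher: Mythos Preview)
Your proposal is correct and follows essentially the same approach as the paper: feasibility via Proposition \ref{prop.contfeasible}, the Fenchel--Young equality from $\mu=\nabla P(\eta)$ to get claim 3, and then the weak-duality argument of Lemma \ref{lem:ZeroEquationLemma} to deduce optimality. The only cosmetic difference is that the paper first packages the weak-duality step into a separate corollary (reducing (COPP)/(COFP) to the standard (OPP)/(OFP) template by the shift $\Delta\mu=\mu-\mu_0$ and the choices $K=I_0$, $\Gamma(\zeta)=P(\zeta)-\mu_0^T\zeta$), whereas you verify the conversion identity $\mathrm u^T\hat{\mathrm v}+\hat\mu^T\hat\eta=0$ directly; both routes are equivalent.
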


The proof of the theorem is similar to the proof of Theorem \ref{InverseOptimality}. Like in that case, we use Lemma \ref{lem:ZeroEquationLemma} to draw a conclusion for our case.
\begin{cor} \label{cor:ZeroEquationLemmaController}
Let $\mu_0,\eta_0$ be two feasible solutions to (COPP) and (COFP). If $P(\eta_0)+P^\star(\mu_0)=\eta^T\mu$ then both solutions are optimal.
\end{cor}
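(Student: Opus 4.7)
The plan is to prove the corollary by combining the Fenchel--Young inequality with the constraints of (COPP) and (COFP) to show that the sum of the two objective values at any feasible pair is nonnegative, and then observing that the hypothesis $P(\eta_0)+P^\star(\mu_0)=\eta_0^T\mu_0$ forces this nonnegative sum to vanish at $(\eta_0,\mu_0)$. Once a weak-duality bound is tight at one feasible pair, optimality of each of $\eta_0$ and $\mu_0$ in its own problem follows immediately by a sandwich argument.

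First I would set up the weak-duality identity. Given any feasible $(\eta,\mathrm{v})$ of (COPP) and any feasible $\mu$ of (COFP), the constraints $\E^T\mathrm{v}=\eta$ and $-\E\mu=\mathrm{u}$ give
$$\mathrm{u}^T\mathrm{v}=-(\E\mu)^T\mathrm{v}=-\mu^T\E^T\mathrm{v}=-\mu^T\eta,$$
so that
$$\bigl[P(\eta)+\mathrm{u}^T\mathrm{v}\bigr]+P^\star(\mu)=P(\eta)+P^\star(\mu)-\eta^T\mu\ \ge\ 0,$$
the inequality being the standard Fenchel--Young bound for the convex function $P$ and its conjugate $P^\star$. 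Along the way this identity also makes the (COPP) objective well-defined as a function of $\eta$ alone: any other $\mathrm{v}'$ with $\E^T\mathrm{v}'=\eta$ differs from $\mathrm{v}$ by an element of $\ker(\E^T)=\IM(\E)^\perp$, and $\mathrm{u}\in\IM(\E)$ by feasibility of (COFP), so $\mathrm{u}^T\mathrm{v}=\mathrm{u}^T\mathrm{v}'$.

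Finally I would instantiate the identity at $(\eta_0,\mu_0)$: the hypothesis yields
$$\bigl[P(\eta_0)+\mathrm{u}^T\mathrm{v}_0\bigr]+P^\star(\mu_0)=P(\eta_0)+P^\star(\mu_0)-\eta_0^T\mu_0=0.$$
Now fix $\mu=\mu_0$ and let $(\eta,\mathrm{v})$ range over feasible pairs of (COPP); the weak-duality inequality gives $P(\eta)+\mathrm{u}^T\mathrm{v}\ge -P^\star(\mu_0)=P(\eta_0)+\mathrm{u}^T\mathrm{v}_0$, so $\eta_0$ is optimal for (COPP). Symmetrically, fixing $\eta=\eta_0$ and varying $\mu$ gives $P^\star(\mu)\ge P^\star(\mu_0)$, so $\mu_0$ is optimal for (COFP). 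The only subtle point is the well-definedness of the (COPP) objective noted above; past that, the argument is a direct Fenchel-duality manipulation with no real obstacle.
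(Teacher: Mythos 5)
Your proof is correct, but it takes a different route from the paper's. The paper first rewrites (COPP) and (COFP) in terms of $\Delta\mu=\mu-\mu_0$ as $\min\{P(\eta)-\mu_0^T\eta\,:\,\eta\in\IM(\E^T)\}$ and $\min\{P^\star(\mu_0+\Delta\mu)\,:\,\Delta\mu\in\ker(\E)\}$, recognizes these as instances of (OPP) and (OFP) with the choices $K=I_0$ and $\Gamma(\zeta)=P(\zeta)-\mu_0^T\zeta$, and then invokes its general zero-equation lemma for that pair, noting $P^\star(\mu_0)=\Gamma^\star(0)$ and that $\mu_0$ is optimal for (COFP) iff $0$ is optimal for the transformed problem. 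You instead inline the content of that lemma: the constraint-induced orthogonality $\mathrm{u}^T\mathrm{v}=-\mu^T\eta$ together with Fenchel--Young gives the weak-duality bound $\bigl[P(\eta)+\mathrm{u}^T\mathrm{v}\bigr]+P^\star(\mu)\ge 0$ over all feasible pairs, the hypothesis makes it tight at $(\eta_0,\mu_0)$, and the sandwich closes. The paper's reduction buys conceptual economy (the controller problems are literally special cases of the network optimization pair already analyzed), whereas your argument is self-contained, avoids verifying that the change of variables preserves feasibility and optimality, and makes explicit a point the paper only handles in the surrounding text: the (COPP) objective is well defined as a function of $\eta$ alone because $\mathrm{u}\in\IM(\E)$ is orthogonal to $\ker(\E^T)$. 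Both arguments rest on the same underlying duality computation, so neither is stronger; yours is simply the more elementary presentation.
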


\begin{proof}
We consider the following pair of problems, which are equivalent to (COPP) and (COFP), by defining $\Delta\mu=\mu-\mu_0$,
\begin{center}
\begin{tabular}{ c||c }
 \textbf{(COPP) Equivalent}  & \textbf{(COFP) Equivalent}   \\
\hline
 $ \begin{array}{cl} \underset{\mathrm{\eta,\mathrm{v}}}{\min} &P(\eta)- \mu_0^T\eta\\
s.t.&{\eta \in \IM(\E^T)} 
\end{array} $&  $ \begin{array}{cl}\underset{\mathrm{\Delta\mu}}{\min}& P^\star(\mu_0+\Delta\mu) \\
s.t. &\Delta\mu \in \ker(\E).
\end{array} $ 
\end{tabular}
\end{center}
Now these are just a special case of (OPP) and (OFP), under the change of notation from $\zeta$ to $\eta$ and the choices $K=I_0$ and $\Gamma(\zeta)=P(\zeta)-\mu_0^T\zeta$. Thus we can apply Lemma \ref{lem:ZeroEquationLemma} and conclude the proof of the theorem, where we note that $P^\star(\mu_0)=\Gamma^\star(0)$, and that $\mu_0$ is optimal for (COFP) if and only if $0$ is optimal for the equivalent problem.
\end{proof}

We are now ready to prove the theorem.
\begin{proof}
Let $\mathrm{u}$ be the corresponding steady-state input to the agents. By Proposition \ref{prop.contfeasible}, we know that $\eta$ satisfies $\mathrm{u}=-\E\psi(\eta)$, and that $\eta\in\IM(\E^T)$, meaning that $\eta$ is a feasible solution to (COPP), and that $\mu$ is a feasible solution to (COFP). Moreover, We know that $\mu=\psi(\eta)$ and that $\psi=\nabla P$, so the supremum in the definition of $P^\star(\mu)$ is achieved at $\eta$ \cite{Rockafellar1}. Thus we have $P^\star(\mu)=\mu^T\eta-P(\eta)$, or $P(\eta)+P^\star(\mu)=\mu^T\eta$. Thus, Corollary \ref{cor:ZeroEquationLemmaController} implies that both $\eta$ and $\mu$ are optimal solutions to (COPP) and (COFP).
\end{proof}

\subsubsection{Closing The Loop}
We can now prove the main theorem of this paper.
\begin{thm} \label{thm:ConvergenceOfClosedLoopSystemSimp}
Consider the network system $(\Sigma, \Pi, \G)$, and suppose all node dynamics are maximally equilibrium-independent cyclically monotone passive and Assumption \ref{AssumptionOutputSynchro} holds. Then there exists constant vectors $\mathrm{u,y,\mu,\eta}$ such that $\lim_{t\rightarrow\infty} u(t) = \mathrm{u}$, $\lim_{t\rightarrow\infty} y(t) = \mathrm{y}$, $\lim_{t\rightarrow\infty} \mu(t) = \mathrm{\mu}$, and $\lim_{t\rightarrow\infty} \eta(t) = \mathrm{\eta}$. Moreover, $(\mathrm{u,y})$ are optimal solutions to (OPP) and (OFP), and $(\eta,\mu)$ are optimal solutions to (COPP),(COFP).
In particular, the network converges to output agreement, meaning that there exists some vector $\beta$ such that $\lim_{t\rightarrow\infty} y(t) = \beta\otimes\mathbbm{1}$.
\end{thm}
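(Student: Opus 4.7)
The plan is to translate the MEICMP hypotheses into standard passivity storage functions, sum them to build a Lyapunov function for the closed loop, invoke the cited Theorem \ref{ConvergencePassivity} to obtain $y(t)\to\mathrm y$, and then extract the remaining limits from the specific integrator form of the controllers. The first preliminary step is to confirm that a steady-state exists. Since MEICMP makes the integral functions $K=\sum_i K_i$ and $\Gamma=\sum_e \Gamma_e$ convex with convex duals $K^\star,\Gamma^\star$, and the problems (OPP)/(OFP) are convex duals of each other, one obtains optimal dual solutions $(\mathrm{u,\mu})$ and $(\mathrm{y,\zeta})$ under mild coercivity; by the equivalence theorem stated immediately before Theorem \ref{thm.Convergence}, the associated 4-tuple is a steady-state of the closed loop in the sense of Proposition \ref{prop.4Requirements}. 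Moreover, because the controllers have the form $\dot\eta_e=\zeta_e,\,\mu_e=\psi_e(\eta_e)$, the steady-state condition $\dot\eta=0$ forces $\mathrm\zeta=0$ and hence $\E^T\mathrm y=0$.

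Fix such a steady-state $(\mathrm{u,y,\zeta,\mu})$. Output-strict MEICMP of each $\Sigma_i$ yields a nonnegative storage function $S_i$ with
\[
\dot S_i \le -\rho_i\|y_i-\mathrm y_i\|^2 + (y_i-\mathrm y_i)^T(u_i-\mathrm u_i),
\]
while MEICMP of $\Pi_e$ yields $W_e\ge 0$ with
\[
\dot W_e \le (\mu_e-\mathrm\mu_e)^T(\zeta_e-\mathrm\zeta_e).
\]
Setting $V=\sum_i S_i+\sum_e W_e$ and substituting the network identities $u=-\E\mu,\;\zeta=\E^T y$ together with the analogous equalities at steady-state, the two cross terms combine to $-(y-\mathrm y)^T\E(\mu-\mathrm\mu)+(\mu-\mathrm\mu)^T\E^T(y-\mathrm y)=0$, leaving
\[
\dot V \;\le\; -\sum_i\rho_i\|y_i(t)-\mathrm y_i\|^2 \;\le\; 0.
\]
This is exactly the hypothesis invoked in Theorem \ref{ConvergencePassivity}, whose conclusion yields $y(t)\to\mathrm y$.

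With $y(t)\to\mathrm y$ in hand, $\zeta(t)=\E^T y(t)\to \E^T\mathrm y=0=\mathrm\zeta$. For the integrator-type controller, the storage can be taken as the Bregman form $W_e(\eta_e)=P_e(\eta_e)-P_e(\mathrm\eta_e)-\mathrm\mu_e^T(\eta_e-\mathrm\eta_e)$; boundedness of $V$ together with strict convexity of $P_e$ keeps $\eta(t)$ bounded. Combined with $\dot\eta=\zeta\to 0$ and a LaSalle-type invariance argument on the set $\{y=\mathrm y\}$, one obtains $\eta(t)\to\mathrm\eta$, and then $\mu=\psi(\eta)\to\psi(\mathrm\eta)=\mathrm\mu$ and $u=-\E\mu\to-\E\mathrm\mu=\mathrm u$. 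Optimality of $(\mathrm{u,\mu})$ and $(\mathrm{y,\zeta})$ for (OFP) and (OPP) then follows from the equivalence theorem preceding the statement.

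The main obstacle I expect is this last step: $\dot\eta\to 0$ does not by itself imply that $\eta$ converges, so one must exploit the Bregman structure of $W_e$ and the strict convexity of $P_e$ to rule out drift in $\ker(\E)$ and pin down a unique limit $\mathrm\eta$. Once $\eta(t)$ is shown to converge, the limits of $\mu$ and $u$ are automatic, and connecting them to dual optimal solutions of (OPP)/(OFP) is just an application of the equivalence theorem.
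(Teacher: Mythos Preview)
Your proposal follows essentially the same route as the paper's own proof: obtain a steady-state from the optimization problems, write down the output-strict passivity and passivity storage inequalities for agents and controllers, invoke Theorem~\ref{ConvergencePassivity} to get $y(t)\to\mathrm y$ (hence $\zeta(t)\to 0$), and then read off the limits of $\eta,\mu,u$ from the integrator structure, finally identifying the limit with optimal solutions of (OPP)/(OFP) and (COPP)/(COFP).

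The one place you diverge is precisely the step you flag as the main obstacle. The paper simply asserts that ``integrating implies that $\eta(t)$ converges to some $\mathrm\eta$, as $\dot\eta=\zeta$''; it does not supply the Bregman/LaSalle argument you outline. Your instinct that $\dot\eta\to 0$ alone is insufficient is well-founded, and your proposed use of the strict convexity of $P_e$ via a Bregman-type storage $W_e$ is a reasonable way to close that gap more rigorously than the paper does. So your proof is not merely a reproduction of the paper's argument --- it is slightly more careful at exactly the point where the paper's proof is terse.
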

\begin{proof}
Our assumption implies that all four optimization problems (OPP), (OFP), (COPP), and (COFP) have solutions, meaning that a steady-state solution exists. The equilibrium-independent passivity assumption implies that there are storage functions $S_i$ (for $i\in\mathbb{V}$) and $W_e$ (for $e\in\mathbb{E}$), such that
\begin{align}
 \dot{S_i} &\le -\rho_i ||y_i(t)-\mathrm{y}_i||^2 + (y_i(t) - \mathrm{y}_i)^T(u_i(t) - \mathrm{u}_i)\\
\dot{W_e} &\le  (\mu_e(t) - \mathrm{\mu}_e)^T(\zeta_e(t) - \mathrm{\zeta}_e) \nonumber ,
\end{align}
where $\mathrm{\zeta}_e = 0$. Theorem \ref{ConvergencePassivity} implies that $y(t)$ converges to $\mathrm{y}$, implying that $\zeta(t)$ converges to $0 = \mathrm{\zeta} = \mathcal{E}^T\mathrm{y}$. Integrating implies that $\eta(t)$ converges to some $\mathrm{\eta}$.

In turn, this implies that $\mu(t)$ converges to $\mathrm{\mu} = \psi(\mathrm{\eta})$ and that $u(t)$ converges to $\mathrm{u} = -\mathcal{E}\mathrm{\mu}$. It is clear that $(\mathrm{u,y})$ is a steady-state pair, and furthermore that $(\mathrm{u,y,\eta,\mu,\zeta})=0$ are satisfy the conditions in Propositions \ref{prop.netfeasible} and \ref{prop.contfeasible}. This concludes the proof of the theorem.
\end{proof}

This chapter can be summarized in the following way - the signals in the closed-loop network depicted in Figure \ref{ClosedLoop} have static counterparts in the field of network optimization theory. The signals in the node dynamics have associates in (OPP) and (OFP), while the controller dynamics can be reflected by (COPP) and (COFP). 

\subsection{Analysis For General Network Systems}
Using the notion of maximal equilibrium-independent cyclically monotonic systems, we can establish a convex optimization framework for more general systems. We now relax the integrator structure of Assumption \ref{AssumptionOutputSynchro}, and consider controllers of the form given in \eqref{eq:EdgeSystemsODE}.


\begin{assump} \label{assump:GeneralController}
The controllers (\ref{eq:EdgeSystemsODE}) are maximally equilibrium-independent cyclically monotone passive.
\end{assump}

From now on, we assume that Assumption \ref{assump:GeneralController} holds. We denote the input-output steady-state relation for the edge $e$ by $\gamma_e$, and let $\gamma(\zeta)$ be the stacked input-output steady-state relation for the controllers. The domain of $\gamma_e$ will be denoted by $\mathcal{Z}_e$, and it's range will be subsets of $\mathcal{M}_e$. Similarly, the domain of $\gamma$ will be denoted by $\mathcal{Z}=\prod_{e=1}^{|\mathbb{E}|} \mathcal{Z}_e$, and its range by $\mathcal{M}=\prod_{e=1}^{|\mathbb{E}|} \mathcal{M}_e$. We denote the corresponding integral functions by $\Gamma_e$ and $\Gamma$. The formalism of the previous sections can be implemented immediately, where one now considers the general forms of (OPP) and (OFP)

\begin{thm} \label{thm:GeneralConvergence}
Consider the network system $(\Sigma, \Pi, \G)$, and suppose all node dynamics are maximally equilibrium-independent cyclically monotone passive and Assumption \ref{assump:GeneralController} holds. Then there exists constant vectors $\mathrm{u,y,\mu,\eta}$ such that $\lim_{t\rightarrow\infty} u(t) = \mathrm{u}$, $\lim_{t\rightarrow\infty} y(t) = \mathrm{y}$, $\lim_{t\rightarrow\infty} \mu(t) = \mathrm{\mu}$, and $\lim_{t\rightarrow\infty} \eta(t) = \mathrm{\eta}$. Moreover, $(\mathrm u, \zeta)$ and $(\mathrm y,\zeta)$ form optimal dual solutions to (OPP) and (OFP).
\end{thm}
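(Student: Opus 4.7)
The plan is to combine the equilibrium--independent passivity dissipation inequalities with a LaSalle--type argument, so that $y(t)$ converges by the classical passivity theorem and the remaining signals follow by bootstrapping through the integrator controller structure. First, I would invoke the previous theorem (which links dual optimal solutions of (OPP)/(OFP) to closed--loop steady--states) to guarantee that at least one steady--state $(\mathrm{u},\mathrm{y},\mathrm{\zeta},\mathrm{\mu})$ exists; for controllers of the integrator form the steady--state forces $\mathrm{\zeta}=0$, i.e.\ $\E^T\mathrm{y}=0$.

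Next, I would assemble the Lyapunov function $V=\sum_i S_i(x_i)+\sum_e W_e(\eta_e)$, where $S_i$ and $W_e$ are the storage functions supplied by the output--strict MEICMP and MEICMP assumptions respectively. Summing the two dissipation inequalities and using the identities $u=-\E\mu$, $\zeta=\E^T y$ together with $\mathrm{u}=-\E\mathrm{\mu}$, $\mathrm{\zeta}=\E^T\mathrm{y}$, the off--diagonal terms $(y-\mathrm{y})^T(u-\mathrm{u})$ and $(\mu-\mathrm{\mu})^T(\zeta-\mathrm{\zeta})$ cancel exactly, leaving
\[
\dot V \;\le\; -\sum_{i}\rho_i\,\|y_i(t)-\mathrm{y}_i\|^2.
\]
This is precisely the hypothesis needed to apply Theorem \ref{ConvergencePassivity}, which then yields $y(t)\to\mathrm{y}$, and consequently $\zeta(t)=\E^T y(t)\to 0=\mathrm{\zeta}$.

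To upgrade this to convergence of the remaining variables I would proceed as follows. Since $V$ is non--increasing and bounded below, $W_e(\eta_e(t))$ is bounded along trajectories; because the natural storage for the integrator controller is a Bregman--type divergence built from the convex integral of $\psi_e$, this boundedness yields boundedness of $\eta(t)$. With bounded trajectories in hand, LaSalle's invariance principle restricts the $\omega$--limit set to the largest invariant subset of $\{y=\mathrm{y}\}$; on that set $\zeta=\E^T\mathrm{y}=0$ so $\dot\eta=0$, meaning the $\omega$--limit set consists of equilibria, giving $\eta(t)\to\mathrm{\eta}$ for some constant $\mathrm{\eta}$. Then $\mu(t)=\psi(\eta(t))\to\mathrm{\mu}:=\psi(\mathrm{\eta})$ and $u(t)=-\E\mu(t)\to\mathrm{u}:=-\E\mathrm{\mu}$ by continuity. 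Finally, the limit 4--tuple $(\mathrm{u},\mathrm{y},0,\mathrm{\mu})$ satisfies the four conditions of Proposition \ref{prop.4Requirements}, so by the immediately preceding theorem it gives rise to dual optimal solutions of (OPP) and (OFP).

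The main obstacle is the step that promotes $\dot\eta(t)=\zeta(t)\to 0$ into actual convergence of $\eta(t)$, since $\dot\eta\to 0$ alone is not enough (e.g.\ $\dot\eta=1/t$). The clean way around this is to use the Lyapunov function to trap $\eta(t)$ in a compact sublevel set and then to apply LaSalle; one must, however, either invoke isolation of the limit point or argue via connectedness of the $\omega$--limit set inside the flat part of $\dot V$. A secondary subtlety is ensuring the Bregman storage for the controller is radially unbounded, which requires the convex integral $P_e$ associated with $\psi_e$ to be sufficiently coercive --- a hypothesis implicit in the MEICMP assumption on the controllers. All other steps (existence of the steady--state, cancellation of cross terms, passage to the optimization characterization) are routine once the Lyapunov inequality above is in place.
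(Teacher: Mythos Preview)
Your proposal follows essentially the same route as the paper: both restrict to the integrator controller case, invoke the optimization duality to obtain a steady-state, use the storage functions together with Theorem~\ref{ConvergencePassivity} to conclude $y(t)\to\mathrm{y}$ and hence $\zeta(t)\to 0$, then bootstrap $\eta,\mu,u$ through $\dot\eta=\zeta$ and $\mu=\psi(\eta)$, and finally identify the limit as a steady-state via Proposition~\ref{prop.4Requirements} and hence as a dual optimal pair. The one place you diverge is precisely the step you flag as the main obstacle: the paper simply writes ``Integrating implies that $\eta(t)$ converges to some $\mathrm{\eta}$'' without further justification, whereas you recognize that $\dot\eta\to 0$ alone is insufficient and propose a LaSalle/boundedness argument instead---so your outline is, if anything, more scrupulous than the paper's at that point, though (as you yourself note) the LaSalle route still leaves the isolation-of-equilibria issue to be settled.
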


As for Theorem \ref{thm:ConvergenceOfClosedLoopSystemSimp}, we split the proof into two parts. The first part shows that solutions of the optimization problems are steady-states of the closed-loop system, while the second part uses passivity to show that the closed-loop system must converge to a steady-state.

\begin{lem} [Inverse Optimality]
Suppose that the assumptions of Theorem \ref{thm:GeneralConvergence} hold. Let $\mathrm{u,y,\zeta,\mu}$ be constant signals such that $\E^T \mathrm{y} = \mathrm{\zeta}$ and $\mathrm{u}=-\E\mu$. Then the following conditions are equivalent:
\begin{enumerate}
\item[i)] $\mathrm{u,y,\zeta,\mu}$ are steady-states of the closed-loop system;
\item[ii)] $(\mathrm{y,\zeta})$ is an optimal solution to (OPP), $(\mathrm{u,\mu})$ is the dual optimal solution to (OFP), and $K(\mathrm{u})+K^\star(\mathrm{y})+\Gamma(\zeta)+\Gamma^\star(\mu)=0$.
\end{enumerate}
\end{lem}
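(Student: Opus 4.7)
The plan is to use the Fenchel--Young inequality as the workhorse, combined with a network ``conservation identity'' and Proposition \ref{prop.4Requirements}. Recall that for any convex function $F$ and its Legendre dual $F^\star$, one has $F(x)+F^\star(z)\ge z^Tx$ with equality if and only if $z\in\partial F(x)$. Applied to $K$ and $\Gamma$, this will translate the Proposition \ref{prop.4Requirements} membership conditions $\mathrm y\in k(\mathrm u)$ and $\mu\in\gamma(\zeta)$ into Fenchel--Young equalities, and back. I will also use the identity
\[
\mathrm u^T\mathrm y+\zeta^T\mu = -(\E\mu)^T\mathrm y+(\E^T\mathrm y)^T\mu = 0,
\]
valid whenever the feasibility constraints $\mathrm u=-\E\mu$ and $\zeta=\E^T\mathrm y$ hold.

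For the direction (i)$\Rightarrow$(ii), I would start from Proposition \ref{prop.4Requirements}, which gives $\mathrm y\in\partial K(\mathrm u)$ and $\mu\in\partial\Gamma(\zeta)$, so $K(\mathrm u)+K^\star(\mathrm y)=\mathrm u^T\mathrm y$ and $\Gamma(\zeta)+\Gamma^\star(\mu)=\zeta^T\mu$. Summing these and invoking the conservation identity produces the scalar equation in (ii). Optimality of $(\mathrm y,\zeta)$ in (OPP) and of $(\mathrm u,\mu)$ in (OFP) then follows by a standard Fenchel-duality bound: for any other feasible $(\mathrm y',\zeta')$ for (OPP), Fenchel--Young yields $K^\star(\mathrm y')+\Gamma(\zeta')\ge \mathrm u^T\mathrm y'-K(\mathrm u)+(\zeta')^T\mu-\Gamma^\star(\mu)$, and the conservation identity applied to $(\mathrm u,\mathrm y',\zeta',\mu)$ collapses the right-hand side to $-K(\mathrm u)-\Gamma^\star(\mu)=K^\star(\mathrm y)+\Gamma(\zeta)$; a symmetric argument handles (OFP).

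For (ii)$\Rightarrow$(i), feasibility gives $\mathrm u=-\E\mu$ and $\zeta=\E^T\mathrm y$, so $\mathrm u^T\mathrm y+\zeta^T\mu=0$. The scalar equation in (ii) can therefore be rewritten as
\[
\bigl(K(\mathrm u)+K^\star(\mathrm y)-\mathrm u^T\mathrm y\bigr)+\bigl(\Gamma(\zeta)+\Gamma^\star(\mu)-\zeta^T\mu\bigr)=0.
\]
Each bracket is non-negative by Fenchel--Young, hence each must vanish individually. This gives $\mathrm y\in\partial K(\mathrm u)=k(\mathrm u)$ and $\mu\in\partial\Gamma(\zeta)=\gamma(\zeta)$, which together with the feasibility constraints is exactly the characterisation of a closed-loop steady-state from Proposition \ref{prop.4Requirements}.

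The bookkeeping is routine; the key non-trivial point is the decoupling step in the (ii)$\Rightarrow$(i) direction, where the scalar zero-sum condition is forced to split into two separate Fenchel--Young equalities. The indispensable ingredient is that each Fenchel--Young gap is individually non-negative \emph{and} their sum is zero; without the conservation identity to eliminate the cross-term $\mathrm u^T\mathrm y+\zeta^T\mu$, this decoupling would not be available.
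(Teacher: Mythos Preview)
Your proof is correct and runs on the same engine as the paper's: the conservation identity $\mathrm u^T\mathrm y+\zeta^T\mu=0$, the Fenchel--Young (in)equality, and Proposition~\ref{prop.4Requirements}. The only organisational difference is that the paper packages $K$ and $\Gamma^\star$ into a single function $F=K+\Gamma^\star$ and invokes a separate lemma (their Lemma on dual optimality from $F+F^\star=0$) to conclude optimality in the (i)$\Rightarrow$(ii) direction, whereas you keep $K$ and $\Gamma$ separate and reprove that lemma inline.

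The one place your argument genuinely differs is in (ii)$\Rightarrow$(i). The paper uses the \emph{dual optimality} clause of (ii) directly: dual optimal pairs satisfy $(\mathrm u,\mu)\in\partial(K^\star+\Gamma)(\mathrm y,\zeta)$, which immediately gives $\mathrm y\in k(\mathrm u)$ and $\mu\in\gamma(\zeta)$. You instead use only the \emph{scalar equation} clause of (ii), splitting it via the conservation identity into two non-negative Fenchel--Young gaps that must each vanish. Your route is slightly more elementary and in fact shows a marginally stronger statement (the optimality clauses of (ii) are not needed once the scalar equation and feasibility hold); the paper's route is shorter but leans on the standard convex-analytic fact that dual optimality is equivalent to the subdifferential relation.
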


\begin{proof}
We first note, as before, that $\mathrm{u,y,\zeta,\mu}$ is a steady-state of the closed-loop system if and only if 
\[
\E^T\mathrm{y}=\zeta,\ \mu\in\gamma(\zeta),\ \mathrm{u} = -\E\mu,\ \mathrm{y} \in k(\mathrm{u}).
\]
Also, the dual of $F=K+\Gamma^\star$ is $K^\star+\Gamma$ and $\partial(K^\star+\Gamma)=(k^{-1},\gamma)$ \cite{Rockafellar1}.

Suppose that (1) holds. We know that $(\mathrm u,\mu)^T(\mathrm y,\zeta) = 0$ and that the supremum defined in $F^\star(\mathrm y,\zeta)$ is achieved in $(\mathrm u,\mu)$. Thus
\[
F^\star(\mathrm y, \zeta) = (\mathrm y,\zeta)^T(\mathrm u,\mu) - F(\mathrm u,\mu) = -F(\mathrm u,\mu),
\]
implying that $K(\mathrm u)+K^\star(\mathrm y)+\Gamma(\zeta)+\Gamma^\star(\mu)=0$, so the proof is complete by Lemma \ref{lem:ZeroEquationLemma}.

Now, suppose that (2) holds. By duality of optimal solutions, we obtain 
\[
(\mathrm{u,\mu})\in \partial(K^\star+\Gamma)|_{(\mathrm{y,\zeta})}=(k^{-1}(\mathrm{y}),\gamma(\zeta)),
\]
meaning that $\mathrm{y}=k(\mathrm{u})$ and $\mu=\gamma(\zeta)$. Together with the assumption that $\mathrm u=-\E\mu$ and $\zeta=\E^T\mathrm y$, we conclude that the solution is indeed a steady-state of the problem.
\end{proof}
We can now prove Theorem \ref{thm:GeneralConvergence}.
\begin{proof}
The lemma implies that the optimal solutions to the optimization problems correspond the equilibria in the system. The rest of the proof, dealing with convergence of the signals $u(t),y(t),\mu(t),\zeta(t)$ to $\mathrm{u,y,\mu,\zeta}$, is exactly the same as the proof of Theorem \ref{thm:ConvergenceOfClosedLoopSystemSimp}, using passivity assumptions on the node and controller dynamics, as well as the basic convergence result Theorem \ref{ConvergencePassivity}.
\end{proof}
\fi
\section{Synthesis of Multi-Agent Systems Using Network Optimization Theory
\label{sec:Synthesis}}

Up to now, the established connection between network optimization
and passivity-based cooperative control only gave us analysis results,
and did not help us derive a synthesis procedure. 
We now deal with the problem of forcing a certain steady-state on the closed-loop system. This is
done by an appropriate design of the edge controllers $\Pi_{e}$.

We assume the agent dynamics are given and are MEICMP. 
The synthesis problem can now be formulated. 
\begin{prob}
\label{prob:Synthesis}Let $\mathrm{y}^{\star}\in(\R^{d})^{|\V|}$ be some
vector. 
\begin{enumerate}
\item Find a computationally feasible criterion assuring the existence of controllers
$\{\Pi_{e}\}_{e\in\EE}$, such that the output of the system $(\Sigma,\Pi,\G)$ has ${\mathrm y}^{\star}$ as a steady-state.
\item In case ${\mathrm y}^{\star}$ satisfies the criterion, find a construction
for $\{\Pi_{e}\}_{e\in\EE}$, that makes the system converge to ${\mathrm y}^{\star}$.
\end{enumerate}
\end{prob}
This section has four parts. Subsection \ref{subsec:CharacterizingForcibleSteadyStates} deals with solving part
1 of the Problem $\ref{prob:Synthesis}$. Subsection \ref{subsec:GlobalAsympConvergenceSynthesis} deals with solving
the second part of the same problem. Subsection \ref{subsec:FormationReconfiguration} deals with different control objectives ${\mathrm y}^{\star}$,
namely by prescribing a procedure which uses a solution for some $y_{1}^{\star}$
to find a solution for $y_{2}^{\star}$ by augmenting
the controller. Finally, subsection \ref{subseq:LeadingAgentsSynthesis} addresses outputs that do not satisfy the desired synthesis criteria. 


As before, we denote the input-output steady-state
relations of the nodes by $k_{i}$, and their integral functions by $K_{i}$. 
We choose the controllers to be output-strictly
MEICMP, so we can discuss their input-output steady-state relations
$\gamma_{e}$ and their integral functions by $\Gamma_{e}$. 

\subsection{Characterizing Forcible Steady-States}\label{subsec:CharacterizingForcibleSteadyStates}

The result of Theorem \ref{thm.Convergence} helps us predict the steady-state outputs
of the closed loop by solving the optimal potential problem (OPP).
The outline to the solution of Problem \ref{prob:Synthesis} is given by studying
the minimizers of the optimization problem (OPP). We first prove the
following proposition.
\begin{prop}
\label{prop:Linearization}Let ${\mathrm y}^{\star}\in(\R^{d})^{|\V|}$ and let $\zeta^{\star}=\E^{T}{\mathrm y}^{\star}$. The pair $({\mathrm y}^{\star},\zeta^{\star})$
is a minimizer of (OPP) if and only if 
\begin{equation}
{\bf 0}\in k^{-1}({\mathrm y}^{\star})+\E\gamma(\zeta^{\star})\label{Linearization}.
\end{equation}
\end{prop}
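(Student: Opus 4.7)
The plan is to reduce (OPP) to an unconstrained convex minimization by eliminating the edge variable $\zeta$, and then write the first-order optimality condition using subdifferential calculus. Specifically, I would substitute the equality constraint $\zeta = \E^T\mathrm{y}$ directly into the objective to obtain the equivalent unconstrained problem
\begin{equation*}
\min_{\mathrm{y}} F(\mathrm{y}) := K^\star(\mathrm{y}) + \Gamma(\E^T\mathrm{y}).
\end{equation*}
Since $K^\star$ and $\Gamma$ are convex (as Legendre transforms / integral functions of maximally CM relations) and $\E^T$ is linear, $F$ is convex. Hence $(\mathrm{y}^\star,\zeta^\star)$ is a minimizer of (OPP) if and only if $\mathrm{y}^\star$ minimizes $F$, which is in turn equivalent to ${\bf 0} \in \partial F(\mathrm{y}^\star)$.

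Next, I would invoke the standard convex-analytic subdifferential sum rule together with the chain rule for composition with a linear map, yielding
\begin{equation*}
\partial F(\mathrm{y}^\star) = \partial K^\star(\mathrm{y}^\star) + \E\, \partial \Gamma(\E^T\mathrm{y}^\star).
\end{equation*}
Using the fact that $K$ and $\Gamma$ are integral functions of $k$ and $\gamma$ together with the standard identities $\partial K^\star = k^{-1}$ and $\partial \Gamma = \gamma$ (both cited from Rockafellar earlier in the paper), this rewrites as
\begin{equation*}
\partial F(\mathrm{y}^\star) = k^{-1}(\mathrm{y}^\star) + \E\gamma(\zeta^\star),
\end{equation*}
so ${\bf 0} \in \partial F(\mathrm{y}^\star)$ is exactly the asserted inclusion \eqref{Linearization}.

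The main obstacle is justifying the applications of the sum rule and the linear-composition chain rule, which in general require constraint qualifications (e.g., relative-interior conditions on the domains of $K^\star$ and $\Gamma$, or finite-valuedness of one of the summands). I would point out that both directions of the equivalence are in fact straightforward: one inclusion, ${\bf 0} \in k^{-1}(\mathrm{y}^\star) + \E\gamma(\zeta^\star) \Rightarrow \mathrm{y}^\star$ minimizes $F$, follows immediately from the definition of the subdifferential without any qualification, by writing the subgradient inequalities for $K^\star$ at $\mathrm{y}^\star$ and $\Gamma$ at $\zeta^\star$, using $\E^T(\mathrm{y}-\mathrm{y}^\star)$ as the perturbation, and summing. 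The reverse direction is the one that in principle needs a qualification, but under the standing assumptions that $k$ and $\gamma$ are maximal (so $K^\star$ and $\Gamma$ are closed proper convex) and that the problem (OPP) admits finite values, the required constraint qualification is automatic. This reduces the proposition to a direct application of convex optimality conditions.
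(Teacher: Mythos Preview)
Your approach is essentially identical to the paper's: it also substitutes the constraint to form $F(y)=K^{\star}(y)+\Gamma(\E^{T}y)$, notes convexity, and invokes subdifferential calculus from Rockafellar to obtain the inclusion. The paper's proof is in fact terser than yours---it simply cites \cite{Rockafeller} for the sum/chain rules without discussing constraint qualifications---so your added remarks on why the subdifferential identities hold are extra care beyond what the paper provides.
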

Note that we demand inclusion instead of equality because the subdifferential set might include more than one element.
\begin{proof}
The network optimization problem (OPP) can be written as an unconstrained
optimization problem in terms of the variable $y$ alone; we ask to
minimize $F(y)=K^{\star}(y)+\Gamma(\E^{T}y)$. This is a convex function
of $y$, so it is minimized only where the zero vector lies in its
subdifferential \cite{Rockafeller}. Thus, by subdifferential calculus (see \cite{Rockafeller})
we obtain that ${\bf 0}\in k^{-1}({\mathrm y}^{\star})+\E\gamma(\E^{T}{\mathrm y}^{\star})$ is equivalent
to $({\mathrm y}^{\star},\zeta^{\star})$ being a minimum. Plugging in $\zeta^{\star}=\E^{T}{\mathrm y}^{\star}$
gives the desired criterion.
\end{proof}
\begin{cor}
Let ${\mathrm y}^{\star}\in(\R^{d})^{|\V|}$. Then one can choose output-strictly
MEICMP controllers $\{\Pi_{e}\}_{e\in\EE}$ so that ${\mathrm y}^{\star}$ is
a steady state of the closed-loop system if and only if $k^{-1}({\mathrm y}^{\star})\cap\IM(\E)\ne\emptyset$.
\end{cor}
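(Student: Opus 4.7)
The plan is to use Proposition \ref{prop:Linearization} to convert the synthesis criterion into an explicit algebraic inclusion, and then realize the required controller behavior by a simple static construction.

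First I would establish the forward direction ($\Rightarrow$). Suppose output-strictly MEICMP controllers $\{\Pi_e\}_{e\in\EE}$ have been chosen so that ${\mathrm y}^\star$ is a steady-state of $(\Sigma,\Pi,\G)$. By Proposition \ref{prop:Linearization} with $\zeta^\star = \E^T {\mathrm y}^\star$, we have ${\bf 0}\in k^{-1}({\mathrm y}^\star)+\E\gamma(\zeta^\star)$. By definition of Minkowski sum, there exist $\mathrm u\in k^{-1}({\mathrm y}^\star)$ and $\mu\in\gamma(\zeta^\star)$ with $\mathrm u = -\E\mu$. In particular $\mathrm u\in\IM(\E)$, so $k^{-1}({\mathrm y}^\star)\cap\IM(\E)\ne\emptyset$.

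Next I would prove the reverse direction ($\Leftarrow$). Pick any $\mathrm u\in k^{-1}({\mathrm y}^\star)\cap\IM(\E)$ and a corresponding $\mu$ with $\mathrm u=-\E\mu$. Setting $\zeta^\star = \E^T {\mathrm y}^\star$, it suffices to construct output-strictly MEICMP controllers $\Pi_e$ whose steady-state input-output relation $\gamma_e$ contains the single pair $(\zeta^\star_e,\mu_e)$, since then ${\bf 0} = \mathrm u + \E\mu \in k^{-1}({\mathrm y}^\star) + \E\gamma(\zeta^\star)$, and Proposition \ref{prop:Linearization} again gives that ${\mathrm y}^\star$ is a steady-state. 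A natural concrete choice is the static controller $\mu_e = \zeta_e + c_e$, where $c_e = \mu_e - \zeta^\star_e$ is a constant offset. Its steady-state relation is the graph of an affine map with symmetric positive-definite Jacobian (the identity), which by Theorem \ref{thm:LinearRelations} is maximally SCM, and the static system is clearly output-strictly passive with respect to every steady-state pair it possesses. By construction, $(\zeta^\star_e,\mu_e)$ lies in this relation, completing the synthesis.

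The main obstacle is the reverse direction: one must exhibit MEICMP controllers realizing a prescribed $(\zeta^\star_e,\mu_e)$ pair on their steady-state relation while simultaneously guaranteeing output strict passivity. I expect the static affine realization above to be the cleanest choice, since both the convexity (for maximal cyclic monotonicity) and the strict passivity reduce immediately to the positive-definiteness of the identity matrix; more elaborate choices such as $\mu_e - \mu_e^\star = \nabla\Phi_e(\zeta_e - \zeta^\star_e)$ for any smooth strongly convex $\Phi_e$ would work equally well. A minor sanity check is that the conversion from inclusion to Minkowski-sum representation used in the forward direction is exactly the definition employed in \eqref{Linearization}, so no extra argument is required there.
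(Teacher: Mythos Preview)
Your proof is correct and follows essentially the same route as the paper: both directions hinge on Proposition~\ref{prop:Linearization}, and the reverse direction is handled by exhibiting a concrete controller whose steady-state relation passes through the required pair $(\zeta_e^\star,\mu_e)$. The only cosmetic difference is that the paper uses the first-order dynamic controller $\dot\eta_e=-\eta_e+\zeta_e-(\xi_e+\zeta_e^\star),\ \mu_e=\eta_e$, whereas you use the even simpler static affine map; note, however, that your citation of Theorem~\ref{thm:LinearRelations} points to material that does not appear in the compiled paper, so you should instead justify maximal SCM directly via Theorem~\ref{thm:Rockafellars} (the relation is $\partial(\tfrac12\|\zeta_e\|^2+c_e^T\zeta_e)$).
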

\begin{proof}
If ${\mathrm y}^{\star}$ is a steady state (for some choice of controllers),
then $\eqref{Linearization}$ proves that $k^{-1}({\mathrm y}^{\star})\cap\IM(\E)\ne\emptyset$.
Conversely, if $k^{-1}({\mathrm y}^{\star})\cap\IM(\E)\ne\emptyset$,
then we can take some vector $\xi$ such that $-\E\xi\in k^{-1}({\mathrm y}^{\star})$.
If MEICMP controllers $\Pi_{e}$ are chosen so that $\gamma(\zeta^{\star})=\gamma(\E^{T}{\mathrm y}^{\star})\ni-\xi$,
then Proposition $\ref{prop:Linearization}$ implies that ${\mathrm y}^{\star}$
is a steady state of the closed-loop system. There are many ways to
choose these controllers, one of them being
\begin{equation}
\Pi_{e}:\begin{cases}
\dot{\eta_{e}}=-\eta_{e}+\zeta_{e}-(\xi_{e}+\zeta_{e}^{\star})\\
\mu_{e}=\eta_{e}
\end{cases}.\label{eq:LinearControllers}
\end{equation}
\end{proof}
\begin{rem}
The chosen controllers have a special structure - these are linear
controllers with constant exogenous inputs that make the system converge to ${\mathrm y}^{\star}$, but their
dependence on ${\mathrm y}^{\star}$ is only through the constant $\xi_{e}+\zeta_{e}^{\star}$.
This small change in the controller will make the entire system converge
to a different point. We'll emphasize this point in subsection $\ref{subsec:FormationReconfiguration}$.
\end{rem}
\textcolor{ black}{It is well-known that the set $\IM(\E)$, called the cut-space of the graph $\G$, consists of all vectors $u$ such that $\sum_{i=1}^{|\V|} u_i = 0$ \cite{Godsil_Royle}}. Thus, the first part of Problem $\ref{prob:Synthesis}$ is solved by the following result.

\begin{cor}
\label{cor:CharacterizationSynthesis}The vector $y\in(\R^{d})^{|\V|}$
is forcible as a steady-state if and only if ${\bf 0}\in\sum_{i=1}^{|\V|}k_{i}^{-1}(y_{i})$.
\end{cor}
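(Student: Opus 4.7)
The plan is to derive this corollary as a straightforward unpacking of the previous corollary in light of the explicit characterization of $\IM(\E)$ as the cut-space that is referenced immediately before the statement. First I would restate the previous corollary: $\mathrm y^\star$ is forcible as a steady state of $(\Sigma,\Pi,\G)$ for some choice of output-strictly MEICMP controllers $\{\Pi_e\}_{e\in\EE}$ if and only if $k^{-1}(\mathrm y^\star)\cap \IM(\E)\neq \emptyset$. So my task reduces to verifying that this intersection condition is equivalent to $\mathbf 0 \in \sum_{i=1}^{|\V|} k_i^{-1}(\mathrm y_i^\star)$.

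Next I would unpack each side. On one hand, since the stacked steady-state relation $k$ is the Cartesian product of the individual relations, a vector $\mathrm u=[\mathrm u_1^T,\dots,\mathrm u_{|\V|}^T]^T$ lies in $k^{-1}(\mathrm y^\star)$ if and only if $\mathrm u_i\in k_i^{-1}(\mathrm y_i^\star)$ for every $i$. On the other hand, the cited fact about the cut-space says that, when we identify $\IM(\E)\subset \R^{d|\V|}$ with vectors partitioned into $|\V|$ blocks of size $d$, it coincides with $\{\mathrm u : \sum_{i=1}^{|\V|}\mathrm u_i = 0\}$. Thus $k^{-1}(\mathrm y^\star)\cap\IM(\E)\neq\emptyset$ is equivalent to the existence of elements $\mathrm u_i\in k_i^{-1}(\mathrm y_i^\star)$ satisfying $\sum_i \mathrm u_i={\bf 0}$.

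Finally, I would observe that this last statement is nothing but the definition of the Minkowski sum given in the preliminaries applied to the sets $k_i^{-1}(\mathrm y_i^\star)\subset \R^d$: namely $\mathbf 0 \in \sum_{i=1}^{|\V|} k_i^{-1}(\mathrm y_i^\star)$. Chaining the equivalences yields the claim.

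The only mild obstacle is the cut-space identification of $\IM(\E)$ in the MIMO (Kronecker) setting; the scalar fact $\IM(E)=\{u\in\R^{|\V|}:\mathbf 1^T u=0\}$ is classical, and tensoring by $\Id_d$ via $\E = E\otimes \Id_d$ immediately gives the block version $\{\mathrm u:\sum_i \mathrm u_i = 0\}$. Once this is acknowledged, the rest of the argument is purely definitional, so the proof is short.
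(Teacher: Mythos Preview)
Your proposal is correct and follows exactly the reasoning the paper intends: the paper does not write out a separate proof for this corollary, but simply states it immediately after recalling that $\IM(\E)$ is the cut-space $\{u:\sum_i u_i=0\}$, leaving the unpacking of $k^{-1}(\mathrm y^\star)\cap\IM(\E)\neq\emptyset$ into the Minkowski-sum condition implicit. Your argument makes this implicit step explicit and is essentially identical in approach.
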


\subsection{Forcing Global Asymptotic Convergence}\label{subsec:GlobalAsympConvergenceSynthesis} 

We now solve part 2 of Problem $\ref{prob:Synthesis}$, giving
criteria for controllers to provide global asymptotic convergence
and constructing controllers that satisfy these criteria.
By Theorem $\ref{thm.Convergence}$, if we take output-strictly MEICMP
controllers, then the closed-loop system converges to some ${\mathrm y}^{\star}$,
so that $({\mathrm y}^{\star},\zeta^{\star}=\E^{T}{\mathrm y}^{\star})$ are a solution
of (OPP).
\begin{cor}
If the chosen controllers are output-strictly MEICMP, and (OPP) has
only one solution $(\hat{{\mathrm y}},\hat{\zeta})$, then the closed-loop system
globally asymptotically converges to $\hat{{\mathrm y}}$.
\end{cor}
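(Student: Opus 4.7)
The plan is to derive this corollary as a direct consequence of Theorem \ref{thm.Convergence} combined with the uniqueness hypothesis on (OPP). First I would observe that the hypotheses of Theorem \ref{thm.Convergence} are in force: the agents are assumed throughout this section to be output-strictly MEICMP, and the controllers chosen here are by assumption output-strictly MEICMP (hence in particular MEICMP, as required). Thus for any initial condition of the closed-loop system $(\Sigma,\Pi,\G)$, Theorem \ref{thm.Convergence} guarantees that the signals $u(t),y(t),\mu(t),\eta(t)$ converge to constant vectors $\mathrm{u,y,\mu,\eta}$, and that the corresponding pair $(\mathrm{y},\zeta=\E^T\mathrm{y})$ is an optimal solution of (OPP).

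Next I would invoke the uniqueness hypothesis. Since (OPP) admits only the single minimizer $(\hat{\mathrm y},\hat\zeta)$, the limit pair $(\mathrm{y},\E^T\mathrm{y})$ produced by Theorem \ref{thm.Convergence} must coincide with $(\hat{\mathrm y},\hat\zeta)$. Hence $\lim_{t\to\infty}y(t)=\hat{\mathrm y}$. Because the argument applies starting from any initial state (Theorem \ref{thm.Convergence} makes no restriction on initial conditions), this convergence is global.

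The main obstacle, such as it is, consists mostly of bookkeeping: verifying that the limit extracted from Theorem \ref{thm.Convergence} is indeed of the form $(\mathrm{y},\E^T\mathrm{y})$ that is feasible for (OPP), and noting that \emph{asymptotic} convergence (not merely limit along subsequences) is already guaranteed by that theorem. There is no need to rerun any passivity/LaSalle argument, nor to separately establish Lyapunov stability of $\hat{\mathrm y}$ at this stage; the work has already been done upstream. If a stronger conclusion (e.g., Lyapunov stability of $\hat{\mathrm y}$ in addition to attractivity) were desired, one would have to augment the argument by exhibiting a Lyapunov function, for instance built from the sum of storage functions of the agents and controllers about $(\hat{\mathrm u},\hat{\mathrm y},\hat\zeta,\hat\mu)$, but the corollary as stated requires only the attractivity part.
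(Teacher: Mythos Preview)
Your approach is essentially the same as the paper's: the corollary is stated immediately after the sentence invoking Theorem~\ref{thm.Convergence} to conclude that the closed-loop output converges to \emph{some} minimizer of (OPP), and the uniqueness hypothesis then pins it down to $(\hat{\mathrm y},\hat\zeta)$. One small inaccuracy: in the synthesis section the standing assumption on the agents is only MEICMP (not output-strictly), with the output-strict passivity placed on the controllers instead; the convergence result of Theorem~\ref{thm.Convergence} still applies by the evident symmetry of the feedback interconnection in Figure~\ref{ClosedLoop} (swap the roles of $\Sigma$ and $\Pi$ in Theorem~\ref{ConvergencePassivity}), so your argument goes through unchanged once you adjust that sentence.
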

The minimization of the function $K^{\star}(y)+\Gamma(\zeta)$ appearing
in (OPP) can be divided into two parts: 
\begin{align}\label{eq:SequentialMinimization}
\min_{\E^{T}y =\zeta} [K^{\star}(y)+\Gamma(\zeta)] &= \min_{\zeta\in\IM(\E)}\min_{y:\E^{T}y=\zeta}[K^{\star}(y)+\Gamma(\zeta)] \nonumber\\ 
 &\hspace{-15pt} =\min_{\zeta\in\IM(\E)}[\Gamma(\zeta)+\min_{y:\E^{T}y=\zeta}K^{\star}(y)].
\end{align}
Our goal is to have $({\mathrm y}^{\star},\zeta^{\star}=\E^{T}{\mathrm y}^{\star})$ as the sole
minimizer of this problem. Thus we have two goals - the outer minimization
problem needs to have $\zeta^{\star}$ as a sole minimizer, and the
inner minimization problem needs to have ${\mathrm y}^{\star}$ as a sole minimizer.
The main tool we employ is the one of strict convexity. We note that
if the systems $\{\Pi_{e}\}_{e\in\EE}$ are output-strictly MEISCMP,
then the input-output relation $\gamma$ is strictly cyclically monotone,
and therefore $\Gamma$ is strictly convex. 
Let us first deal with the outer minimization problem in $\eqref{eq:SequentialMinimization}$. 
\begin{defn}
The \emph{minimal potential function} is a function $G=G_{\G,K}:\IM(\E^{T})\rightarrow\R$,
depending on the graph $\G$ and the integral functions of the agents' steady-state input-output maps, $K$, defined by
\[
G(\zeta)=\min\{K^{\star}(y)|\ \E^{T}y=\zeta\}.
\]
\end{defn}
\begin{prop}
\label{prop:OuterMinimization}Suppose that $\eqref{Linearization}$
is satisfied by the pair $({\mathrm y}^{\star},\zeta^{\star}=\E^{T}{\mathrm y}^{\star})$.
Suppose further that the function $\Gamma_{e}$ is strictly convex
in the neighborhood of $\zeta_{e}^{\star}$ for all $e\in\EE$. Then
$\zeta^{\star}$ is the unique minimizer of the outer optimization
problem in $\eqref{eq:SequentialMinimization}$, i.e., $\zeta^{\star}$
is the unique minimizer of $\Gamma(\zeta)+G(\zeta)$.
\end{prop}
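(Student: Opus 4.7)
The plan is to show that $\zeta^\star$ is a minimizer of the outer problem, and then upgrade ``a minimizer'' to ``the unique minimizer'' by a convex combination argument that exploits the local strict convexity of $\Gamma_e$ near $\zeta_e^\star$.

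First I would verify existence. By Proposition \ref{prop:Linearization}, condition \eqref{Linearization} is exactly the optimality condition for $(\mathrm{y}^\star,\zeta^\star)$ in (OPP); combined with the decomposition \eqref{eq:SequentialMinimization}, this already implies that $\zeta^\star$ attains the minimum of $F(\zeta) := \Gamma(\zeta) + G(\zeta)$ over $\zeta\in\IM(\E^T)$. So the only work is to rule out other minimizers. Note also that $G$ is convex on $\IM(\E^T)$, being the partial infimum of the jointly convex function $K^\star(y)$ over the linear slice $\{y:\E^T y = \zeta\}$, and $\Gamma=\sum_e \Gamma_e$ is convex; hence $F$ is convex.

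Next I would argue by contradiction: suppose there is a second minimizer $\widetilde{\zeta}\in\IM(\E^T)$, $\widetilde{\zeta}\ne \zeta^\star$. Convexity of $F$ together with both endpoints being minimizers forces the entire segment $\zeta(\lambda) = (1-\lambda)\zeta^\star + \lambda\widetilde\zeta$, $\lambda\in[0,1]$, to consist of minimizers of $F$ (and it lies in $\IM(\E^T)$ because the latter is a linear subspace). Pick an edge index $e_0$ with $\widetilde\zeta_{e_0}\ne \zeta^\star_{e_0}$; then $\lambda\mapsto \zeta(\lambda)_{e_0}$ is a non-constant affine map into $\R^d$, so for sufficiently small $\lambda>0$ it remains inside the neighborhood of $\zeta^\star_{e_0}$ on which $\Gamma_{e_0}$ is strictly convex.

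Finally I would extract the contradiction. Choose $0<\lambda_1<\lambda_2$ small enough that both $\zeta(\lambda_1)_{e_0}$ and $\zeta(\lambda_2)_{e_0}$ lie in the strict-convexity neighborhood of $\zeta^\star_{e_0}$, and set $\lambda_m = (\lambda_1+\lambda_2)/2$. Strict convexity of $\Gamma_{e_0}$ gives $\Gamma_{e_0}(\zeta(\lambda_m)_{e_0}) < \tfrac12\bigl(\Gamma_{e_0}(\zeta(\lambda_1)_{e_0}) + \Gamma_{e_0}(\zeta(\lambda_2)_{e_0})\bigr)$, while the remaining $\Gamma_e$'s and $G$ merely satisfy weak midpoint inequalities. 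Summing yields $F(\zeta(\lambda_m)) < \tfrac12(F(\zeta(\lambda_1))+F(\zeta(\lambda_2)))$, which contradicts the fact that all three of $\zeta(\lambda_1),\zeta(\lambda_m),\zeta(\lambda_2)$ are minimizers and hence share the common value $\min F$. Therefore no such $\widetilde\zeta$ exists, and $\zeta^\star$ is the unique minimizer.

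The main obstacle I anticipate is purely a bookkeeping one: strict convexity of $\Gamma_{e_0}$ is only assumed locally around $\zeta^\star_{e_0}$, so the midpoint argument must be executed on a short sub-segment anchored at $\zeta^\star$ rather than on the full segment from $\zeta^\star$ to $\widetilde\zeta$. The convexity of $F$ on the full segment is what lets us freely restrict attention to that sub-segment without losing the ``all points are minimizers'' property.
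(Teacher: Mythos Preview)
Your proposal is correct and follows essentially the same route as the paper: establish that $\zeta^\star$ is a minimizer via Proposition~\ref{prop:Linearization}, note that $G$ (and hence $F=\Gamma+G$) is convex, and then derive a contradiction by combining convexity of the minimizer set with local strict convexity of $\Gamma$ near $\zeta^\star$. The only cosmetic difference is that the paper phrases the last step abstractly (``$M\cap\mathcal{U}$ contains at most one point''), whereas you spell out the midpoint inequality on a short sub-segment and isolate a single edge $e_0$; both are the same argument.
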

\begin{proof}
Because the function $K^{\star}$ is convex, the function $G$ is
also convex (see  \cite{Appendix}). Thus $\Gamma(\zeta)+G(\zeta)$ is convex
as a sum of convex functions, and it is strictly convex near $\zeta^{\star}$.
Let $M$ be the collection of $(G+\Gamma)$'s minima. It follows from Proposition \ref{prop:Linearization} that $\zeta^{\star}\in M$.  Furthermore, the set $M$ is convex, since $G+\Gamma$ is a convex function.  Finally, there is some small neighborhood $\mathcal{U}$ of $\zeta^{\star}$
such that $M\cap\mathcal{U}$ contains no more than one point, as
$G+\Gamma$ is strictly convex in a neighborhood of $\zeta^{\star}$.
We claim that these facts imply that $M$ contains the single point
$\zeta^{\star}$, concluding the proof. Indeed, suppose that there's
some other $\zeta\in M$. By convexity, we have $\zeta_{t}=t\zeta+(1-t)\zeta^{\star}\in M$
for all $t\in(0,1)$, and in particular, for small $t>0$.
If $t>0$ is small enough then $\zeta_{t}\in\mathcal{U}$, as $\mathcal{U}$
is open, meaning that $\zeta_{t}\in M\cap\mathcal{U}$ for $t>0$
small. But this is impossible as $M\cap\mathcal{U}$ cannot contain more than
one point. Thus $\zeta^{\star}$ is the unique minimizer of $G+\Gamma$.
\end{proof}
Now for the inner minimization problem of $\eqref{eq:SequentialMinimization}$.
We wish that ${\mathrm y}^{\star}$ would be the unique minimizer of $K^{\star}(y)$
on the set $\{\E^{T}y=\zeta^{\star}\}=\{{\mathrm y}^{\star}+\beta\otimes\bold{1}\,|\,\beta\in\R^{d}\}$.
We consider $A:\R^{d}\rightarrow\R$ defined by $A(\beta)=K^{\star}({\mathrm y}^{\star}+\beta\otimes\bold{1})$,
and we wish that $\beta=0$ will be the unique minimizer of
$A$.

Minimizing $A$ is the same as finding $\beta$ such that ${\bf 0}\in\partial A(\beta)$.
By subdifferential calculus \cite{Rockafeller,Appendix}, we have $\partial A(\beta)=\Proj_{\ker\E^{T}}k^{-1}({\mathrm y}^{\star}+\beta\otimes\bold{1})$,
where we use $\{\beta\otimes\bold{1}:\ \beta\in\R^{d}\}=\ker\E^{T}$.
We already saw that $\Proj_{\ker\E^{T}}u=\Bigg(\frac{1}{|\V|}\sum_{1}^{|\V|}u_{i}\Bigg)\otimes\bold{1}$,
so we conclude that $0\in\partial A(\beta)$ is equivalent to $0\in\sum_{1}^{|\V|}k_{i}^{-1}(y_{i}^{\star}+\beta\otimes\bold{1})$.
Note that plugging $\beta=0$ gives the exact same condition
appearing in Corollary $\ref{cor:CharacterizationSynthesis}$, thus
if ${\mathrm y}^{\star}$ satisfies the condition in Corollary $\ref{cor:CharacterizationSynthesis}$,
then it is a solution to the inner minimization problem of $\eqref{eq:SequentialMinimization}$.
We want to make sure that it is the only minimizer. By similar methods,
we can prove the following result.
\begin{prop}
Consider the function $A(\beta)=K^{\star}({\mathrm y}^{\star}+\beta\otimes\bold{1})$.
If ${\mathrm y}^{\star}$ satisfies the condition in Corollary $\eqref{cor:CharacterizationSynthesis}$
and $A$ is strictly convex near $0$, then ${\mathrm y}^{\star}$ is the unique
minimizer of $K^{\star}(y)$ on the set $\{\E^{T}y=\zeta^{\star}\}$.
\end{prop}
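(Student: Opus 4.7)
The plan is to follow the same strategy as in Proposition \ref{prop:OuterMinimization}, since the situation here is structurally identical: a convex function with a minimizer already known to exist, combined with strict convexity near that candidate minimizer. First I would make explicit the reduction of the constrained problem to the unconstrained one in terms of $A$. Since $\ker \E^T$ consists precisely of vectors of the form $\beta \otimes \mathbf{1}$ with $\beta \in \R^d$, the feasible set satisfies $\{y:\E^T y = \zeta^\star\} = \{\mathrm{y}^\star + \beta \otimes \mathbf{1}:\beta\in\R^d\}$. Hence minimizing $K^\star(y)$ over this affine set is equivalent to minimizing $A(\beta)$ over $\R^d$, with the point $\mathrm{y}^\star$ corresponding to $\beta = 0$. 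Establishing that $\beta = 0$ is the unique minimizer of $A$ proves the proposition.

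Next I would verify that $\beta = 0$ is at least one minimizer of $A$. The subdifferential computation in the paragraph immediately preceding the proposition gives $\partial A(\beta) = \Proj_{\ker \E^T} k^{-1}(\mathrm{y}^\star + \beta \otimes \mathbf{1})$, and it was shown there that $\mathbf{0} \in \partial A(0)$ is equivalent to $\mathbf{0} \in \sum_{i=1}^{|\V|} k_i^{-1}(y_i^\star)$. This is exactly the hypothesis assumed from Corollary \ref{cor:CharacterizationSynthesis}. Because $K^\star$ is convex and $A$ is obtained from it by precomposition with an affine map, $A$ is convex, and the inclusion $\mathbf{0}\in\partial A(0)$ therefore implies that $\beta = 0$ is a global minimizer of $A$.

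For uniqueness, the plan is to repeat the neighborhood argument used in Proposition \ref{prop:OuterMinimization}. Let $M = \arg\min A$. Then $M$ is a convex set (being the minimizer set of a convex function) and $0 \in M$. By assumption, there is a neighborhood $\mathcal{U}$ of $0$ on which $A$ is strictly convex, so $M \cap \mathcal{U}$ cannot contain more than one point. If some $\beta \neq 0$ lay in $M$, then by convexity $\beta_t = t\beta \in M$ for every $t \in (0,1)$; choosing $t$ small enough places $\beta_t$ inside $\mathcal{U}$, contradicting $|M \cap \mathcal{U}| \le 1$. Hence $M = \{0\}$, which is exactly the required uniqueness.

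There is no real obstacle: the proposition is a direct analogue of Proposition \ref{prop:OuterMinimization}, with the outer variable $\zeta$ replaced by the inner parameter $\beta$, and the crucial subdifferential identity has already been supplied in the text preceding the statement. The only points requiring some care are making the translation $\{y:\E^T y = \zeta^\star\} \leftrightarrow \{\beta\in\R^d\}$ precise, and noting that the argument uses only local strict convexity around the candidate minimizer rather than strict convexity of $A$ everywhere.
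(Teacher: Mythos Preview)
Your proposal is correct and follows exactly the approach indicated in the paper, which simply states that the proof is the same as that of Proposition~\ref{prop:OuterMinimization}. You have made explicit the reduction to the unconstrained problem in $\beta$, the fact that $\beta=0$ is a minimizer via the subdifferential condition from Corollary~\ref{cor:CharacterizationSynthesis}, and the convexity/local strict convexity argument for uniqueness, all of which are precisely the ingredients the paper relies on implicitly.
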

The proof is exactly the same as the proof of Proposition $\eqref{prop:OuterMinimization}$. We
conclude with the main synthesis result.
\begin{thm}[Synthesis Criterion of MEICMP systems]
\label{thm:Synthesis} Consider a networked system $(\Sigma, \Pi, \G)$,, and let ${\mathrm y}^{\star}$ be the desired steady-state output.
Suppose that $\{\Pi_{e}\}_{e\in\EE}$ are output-strictly MEICMP controllers,
and denote their input-output relations by $\gamma_{e}$, and the corresponding
integral functions by $\Gamma_{e}$. Assume that the following conditions
hold:
\begin{enumerate}
\item[i)] the equation $\eqref{Linearization}$ is satisfied by the pair $({\mathrm y}^{\star},\zeta^{\star}=\E^{T}{\mathrm y}^{\star})$;
\item[ii)] for any $e\in\EE$, the function $\Gamma_{e}^{\star}$ is strictly
convex in a neighborhood of $\zeta_{e}$;
\item[iii)] the function $A:\R^{d}\rightarrow\R$, defined by $A(\beta)=\sum_{i=1}^{|\V|}K_{i}^{\star}(y_{i}^{\star}+\beta\otimes\bold{1})$, 
is strictly convex near ${\beta}=0$;
\item[iv)] the vector $0$ is in the subdifferential set $\sum_{i=1}^{|\V|}k_{i}^{-1}(y_{i}^{\star})$.
\end{enumerate}
Then the output of the closed-loop system globally asymptotically
converges to ${\mathrm y}^{\star}$. Furthermore, if the agents are output-strictly
MEICMP, we can relax our demand and require the controllers $\{\Pi_{e}\}_{e\in\EE}$
to only be MEICMP.
\end{thm}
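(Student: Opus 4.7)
The plan is to reduce the convergence claim to a uniqueness-of-minimizer statement for (OPP), then establish that uniqueness via the sequential decomposition already introduced in equation~\eqref{eq:SequentialMinimization}.

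First I would invoke Theorem~\ref{thm.Convergence}: under the MEICMP / output-strict MEICMP hypotheses on agents and controllers, the signals $(u(t),y(t),\mu(t),\eta(t))$ converge to constants $(\mathrm u,\mathrm y,\mathrm \mu,\mathrm \eta)$ for which $(\mathrm y,\zeta=\E^T\mathrm y)$ is an optimal primal solution of (OPP) and $(\mathrm u,\mathrm \mu)$ is dual optimal for (OFP). It therefore suffices to show that $(\mathrm y^\star,\zeta^\star)$ is the \emph{unique} minimizer of (OPP); for the relaxed setting at the end of the statement, Theorem~\ref{thm.Convergence} still applies because the output-strictness can sit on either side of the feedback loop, so I would treat both cases in a single argument.

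Next I would split (OPP) into the nested minimization of~\eqref{eq:SequentialMinimization}: an outer problem over $\zeta\in\IM(\E^T)$ minimizing $\Gamma(\zeta)+G(\zeta)$, and an inner problem over the fiber $\{y:\E^Ty=\zeta\}$ that defines the minimal potential function $G$. Condition~(i) and Proposition~\ref{prop:Linearization} guarantee that $(\mathrm y^\star,\zeta^\star)$ is \emph{some} minimizer, so only uniqueness remains. For the outer problem, I would apply Proposition~\ref{prop:OuterMinimization} directly: convexity of $G$ together with local strict convexity of $\Gamma$ near $\zeta^\star$ provided by condition~(ii) promotes the candidate to the unique outer minimizer via the neighborhood/convex-combination argument from that proposition. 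For the inner problem, I parametrize the fiber $\{y:\E^Ty=\zeta^\star\}$ as $y=\mathrm y^\star+\beta\otimes\mathbf 1$ with $\beta\in\R^d$ (using $\ker\E^T=\{\beta\otimes\mathbf 1\}$), so the inner objective becomes $A(\beta)=\sum_{i=1}^{|\V|}K_i^\star(y_i^\star+\beta\otimes\mathbf 1)$. Subdifferential calculus yields
\[
\partial A(\beta)=\Proj_{\ker\E^T}\,k^{-1}(\mathrm y^\star+\beta\otimes\mathbf 1),
\]
so condition~(iv) is exactly $0\in\partial A(0)$, making $\beta=0$ an inner minimizer; condition~(iii) and a verbatim repetition of the local-to-global convex-combination argument then forces $\beta=0$ to be the unique inner minimizer.

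Chaining the two uniqueness statements shows that $(\mathrm y^\star,\zeta^\star)$ is the sole minimizer of (OPP), and combining this with the first step yields $y(t)\to\mathrm y^\star$, i.e.\ global asymptotic convergence. The main obstacle will be justifying the local-to-global promotion carefully in both the outer and inner steps: one must verify that $G$ is itself convex (cited from \cite{Appendix}), that the subdifferential identity for $A$ follows from the appropriate chain rule for maximal cyclically monotone relations ($\partial K^\star=k^{-1}$ composed with the affine embedding $\beta\mapsto \mathrm y^\star+\beta\otimes\mathbf 1$ whose adjoint is the projection onto $\ker\E^T$), and that the convex set of minimizers, being trapped in an open neighborhood of strict convexity, must collapse to a single point. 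Once these three technical points are in place, the theorem reduces to a direct chaining of the previously established propositions, and the relaxed statement follows simply by relocating the output-strictness assumption within the hypotheses of Theorem~\ref{thm.Convergence}.
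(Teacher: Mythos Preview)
Your proposal is correct and follows essentially the same approach as the paper: invoke Theorem~\ref{thm.Convergence} to reduce to uniqueness of the (OPP) minimizer, then use the nested decomposition~\eqref{eq:SequentialMinimization} together with Proposition~\ref{prop:OuterMinimization} (outer layer, via condition~(ii)) and the inner-layer proposition on $A(\beta)$ (via conditions~(iii)--(iv)) to conclude. The paper's proof is simply a two-sentence summary of exactly this chaining, so your write-up is a faithful expansion of it.
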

\begin{proof}
The MEICMP assumptions imply that the closed-loop system always converges
to some solution of (OPP). The equation $\eqref{eq:SequentialMinimization}$,
together with conditions i)-iv) show that $({\mathrm y}^{\star},\zeta^{\star}=\E^{T}{\mathrm y}^{\star})$
are the unique minimizers of (OPP), implying that the system always
converges to ${\mathrm y}^{\star}$. This completes the proof. 
\end{proof}
\begin{rem}
\label{rem:SynthesizeForFormation}If we only assume condition i)
and ii), we get that the system converges to some $\hat{{\mathrm y}}$ which
satisfies $\E^{T}\hat{{\mathrm y}}=\E^{T}{\mathrm y}^{\star}=\zeta^{\star}$. This can
be important in problems in which ${\mathrm y}^{\star}$ is less important than
$\zeta^{\star}$, e.g. when we care about relative outputs (like in
formation control) \cite{LCSS_Paper}.
\end{rem}
\begin{exam}
\label{exa:LinearControllers}Consider the controllers constructed in
$\eqref{eq:LinearControllers}$:
\[
\Pi_{e}:\begin{cases}
\dot{\eta_{e}}=-\eta_{e}+\zeta_{e}-(\xi_{e}+\zeta_{e}^{\star})\\
\mu_{e}=\eta_{e}
\end{cases},
\]
for some $\{\xi_{e}\}_{e\in\EE}$ which are a function of ${\mathrm y}^{\star}$,
and chosen so that condition (1) of Theorem \ref{thm:Synthesis} is satisfied. In that case, we
can compute and see that $\gamma_{e}(\zeta_{e})=\zeta_{e}-\xi_{e}-\zeta_{e}^{\star}$,
so that $\Gamma_{e}(\zeta_{e})=\frac{1}{2}||\zeta_{e}||^{2}-\zeta_{e}^{T}(\xi_{e}+\zeta_{e}^{\star})$
is a strictly convex function, yielding that condition (2) is satisfied. Thus
these controllers always yield the correct relative output $\zeta^{\star}=\E^{T}{\mathrm y}^{\star}$.
\end{exam}
\begin{rem}
Note that the conditions iii) and iv) in Theorem \ref{thm:Synthesis} are controller independent, meaning
that we can always find the correct relative output, but not always
converge to ${\mathrm y}^{\star}$. This is the same phenomenon appearing
in consensus protocols, in which agreement is achieved, but its convergence point
is completely determined by the initial conditions of the agents and
cannot be controlled.
In other words, we can always synthesize for the relative outputs
vector $\zeta^{\star}$, and if ${\mathrm y}^{\star}$ is achievable using synthesis,
the system will converge to it.
\end{rem}

\subsection{Changing the Objective and ``Formation Reconfiguration''}\label{subsec:FormationReconfiguration}

In practical applications, we may want to change the desired output
${\mathrm y}^{\star}$ after some time. However, we wish to avoid a change in
the controller design scheme. Note that in Example \ref{exa:LinearControllers},
we used the desired output ${\mathrm y}^{\star}$ to define the vector $\xi+\zeta^{\star}$. Other than that vector, the controller is independent
of ${\mathrm y}^{\star}$. In \cite{LCSS_Paper}, a partial solution to this problem, named ``Formation Reconfiguration", was introduced for SISO agents and controllers, allowing to solve the synthesis problem for arbitrary desired relative output vector $\zeta^\star\in\mathbb{R}^{|\mathbb{E}|}$ using controller augmentation. In this section, we expand this solution in two manners - we exhibit it for MIMO systems, as well as focus on the synthesis problem for an arbitrary desired output vector $y^\star$.

We wish to implement a similar mechanism for general controllers. We take a stacked controller of the form \eqref{eq:EdgeSystemsODE}, and add a constant exogenous input $\omega=(\alpha,\beta)$,  

\[
\Pi_{\omega}^{\#}:\begin{cases}
\dot{\eta}=\phi(\eta,\zeta-\alpha)\\
\mu=\psi(\eta,\zeta-\alpha)+\beta.
\end{cases}
\]  

This design allows us to alter the design of the system by changing
$\omega=(\alpha,\beta)$, yielding different steady-state outputs. We denote the steady-state output of the closed-loop system with the controller $\Pi_{\omega}^{\#}$ as ${\mathrm y}_0$, i.e., $\Pi_{\omega}^{\#}$ solves the synthesis problem for ${\mathrm y}_0$.

The following result implies that it is enough to solve the synthesis problem for a single output or relative output (e.g., consensus), applying the ``formation reconfiguration" procedure to force any other desired formation.
\begin{thm} [Formation Reconfiguration]
Consider a networked system $(\Sigma, \Pi, \G)$, and suppose that its output converges to ${\mathrm y}_{0}$. Then there
is a function $g:y\mapsto\omega$ such that for any desired achievable
output ${\mathrm y}^{\star}$, satisfying conditions iii) and iv) of Theorem \ref{thm:Synthesis},
if one defines $\alpha=\E^{T}{\mathrm y}^{\star}-\E^{T}{\mathrm y}_{0}$ and $\beta=g({\mathrm y}^{\star})-g({\mathrm y}_{0})$,
then the output of the networked system $(\Sigma, \Pi_\omega^{\#}, \G)$ converges to $y^\star$.
\end{thm}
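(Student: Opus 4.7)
The plan is to reduce the claim to Theorem \ref{thm:Synthesis} applied to the reconfigured network $(\Sigma, \Pi_\omega^\#, \G)$. The key observation is that since $\Pi_\omega^\#$ shifts the input of $\Pi$ by $\alpha$ and its output by $\beta$, its steady-state input-output relation is $\gamma^\#(\zeta) = \gamma(\zeta - \alpha) + \beta$, with integral function $\Gamma^\#(\zeta) = \Gamma(\zeta - \alpha) + \beta^T\zeta$ (up to an additive constant). This operation preserves output-strict MEICMP and preserves strict convexity locally, so $\Pi_\omega^\#$ remains within the class of controllers to which Theorem \ref{thm:Synthesis} applies.

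Before specifying the reconfiguration parameters, I would first extract from the assumption that the original system converges to $\mathrm{y}_0$, via Theorem \ref{thm.Convergence} and Proposition \ref{prop.4Requirements}, a steady-state quadruple $(\mathrm{u}_0, \mathrm{y}_0, \zeta_0, \mu_0)$ satisfying $\mathrm{u}_0 = -\E\mu_0 \in k^{-1}(\mathrm{y}_0)$, $\zeta_0 = \E^T\mathrm{y}_0$, and $\mu_0 \in \gamma(\zeta_0)$. Then I would define $g$ as a selection function: for each $\mathrm y$ for which $k^{-1}(\mathrm y)\cap\IM(\E)$ is non-empty (i.e.\ each $\mathrm y$ satisfying condition (iv) of Theorem \ref{thm:Synthesis}), pick $g(\mathrm y)$ to be some $\mu$ with $-\E g(\mathrm y)\in k^{-1}(\mathrm y)$, and pin down $g(\mathrm{y}_0) := \mu_0$ so that the original steady-state is consistent with the selection. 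Setting $\alpha = \zeta^\star - \zeta_0$ and $\beta = g(\mathrm y^\star) - g(\mathrm{y}_0)$ completes the recipe.

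The core verification is that $(\Sigma, \Pi_\omega^\#, \G)$ satisfies the hypotheses of Theorem \ref{thm:Synthesis} at $\mathrm y^\star$. Condition (i) is the essential point: unpacking the definition, $\gamma^\#(\E^T\mathrm y^\star) = \gamma(\zeta^\star - \alpha) + \beta = \gamma(\zeta_0) + \beta$, so since $\mu_0\in\gamma(\zeta_0)$, the element $\mu_0 + \beta = g(\mathrm y^\star)$ lies in $\gamma^\#(\E^T\mathrm y^\star)$; by construction of $g$, $-\E g(\mathrm y^\star)\in k^{-1}(\mathrm y^\star)$, which yields $\mathbf{0}\in k^{-1}(\mathrm y^\star) + \E\gamma^\#(\E^T\mathrm y^\star)$. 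Condition (ii) is inherited from the original $\Gamma_e$ since $\Gamma^\#_e$ differs from a shift of $\Gamma_e$ only by an affine term, so strict convexity near $\zeta_{0,e}$ (implied by unique convergence of the original network to $\mathrm{y}_0$) transfers to strict convexity of $\Gamma^\#_e$ near $\zeta^\star_e = \zeta_{0,e} + \alpha_e$. Conditions (iii) and (iv) are assumed directly. An invocation of Theorem \ref{thm:Synthesis} then yields global asymptotic convergence of the augmented network's output to $\mathrm y^\star$.

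The main obstacle I expect is in setting up the selection $g$ consistently and isolating the role of $\mathrm{y}_0$: because $k^{-1}(\mathrm y)$ is set-valued and $\E$ has a non-trivial kernel, many $\mu$'s are admissible at each $\mathrm y$, and one must verify that some single-valued selection exists globally (e.g.\ via the axiom of choice, or more concretely by selecting the minimum-norm element of $-\E^{-1}(k^{-1}(\mathrm y)\cap\IM(\E))$) while still being compatible with the anchoring $g(\mathrm{y}_0) = \mu_0$. Once this set-valued bookkeeping is in place, the application of Theorem \ref{thm:Synthesis} is essentially algebraic.
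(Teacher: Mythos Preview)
Your proposal is correct and follows essentially the same route as the paper: compute $\gamma_\omega^\#(\zeta)=\gamma(\zeta-\alpha)+\beta$, define $g$ as a selection $\mu_{\mathrm y}$ with $-\E\mu_{\mathrm y}\in k^{-1}(\mathrm y)$, and verify that \eqref{Linearization} holds for the augmented controllers at $\mathrm y^\star$. Your version is in fact a bit more careful than the paper's---you anchor $g(\mathrm y_0)=\mu_0$ explicitly, treat the set-valued relations as such rather than as points, and check all four conditions of Theorem~\ref{thm:Synthesis} (the paper stops after verifying condition (i)).
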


The controllers produced by the formation reconfiguration scheme are
illustrated in Figure \ref{FormationReconfiguration}.
\begin{proof}
The steady-state input-output relation $\gamma_{\omega}^{\#}$ of $\Pi_{\omega}^{\#}$
can be computed from $\gamma$ using the equation
\[
\gamma_{\omega}^{\#}(\zeta)=\gamma(\zeta-\alpha)+\beta.
\]

Given any achievable ${\mathrm y}$, we know from condition (4) of Theorem $\ref{thm:Synthesis}$
that $k^{-1}({\mathrm y})\cap\IM(\E)\ne\emptyset$, so we take some $\mu_{{\mathrm y}}\in(\R^{d})^{|\EE|}$
such that $-\E\mu_{{\mathrm y}}\in k^{-1}({\mathrm y})$; we define $g({\mathrm y})=\mu_{{\mathrm y}}$.

Now, take some achievable ${\mathrm y}^{\star}$. We denote $\zeta_{0}=\E^{T}{\mathrm y}_{0}$,
and $\zeta^{\star}=\E^{T}{\mathrm y}^{\star}$, so that $\alpha=\zeta^{\star}-\zeta_{0}$,
and $\beta=\mu_{{\mathrm y}^{\star}}-\mu_{{\mathrm y}_{0}}$. Then,
\begin{align*}
k^{-1}({\mathrm y}^{\star})&=k^{-1}({\mathrm y}_{0})+[k^{-1}({\mathrm y}^{\star})-k^{-1}({\mathrm y}_{0})] \\
&\hspace{-20pt}=-\E\gamma(\zeta_{0})-\E(\mu_{{\mathrm y}^{\star}}-\mu_{{\mathrm y}_{0}}) =-\E\left(\gamma(\zeta_{0})-\mu_{{\mathrm y}_{0}}+\mu_{{\mathrm y}^{\star}}\right) \\
&\hspace{-20pt}=-\E(\gamma(\zeta_{0})+\beta)=-\E(\gamma(\zeta_{0})+\beta)=-\E\gamma_{\omega}^{\#}(\zeta_0+\alpha)\\
&\hspace{-20pt}=-\E\gamma_{\omega}^{\#}(\zeta^\star).
\end{align*}
%
which proves our claim.
\end{proof}

\begin{figure} [t] 
    \centering
    \includegraphics[scale=0.5]{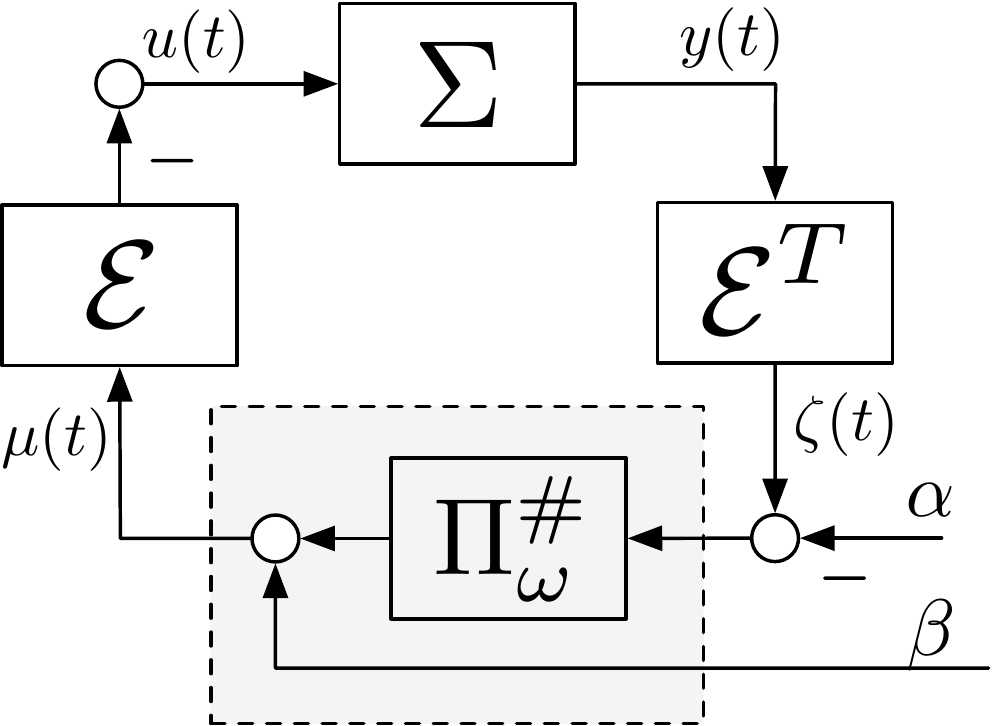}
    \caption{The formation reconfiguration scheme.}
    \label{FormationReconfiguration}
\end{figure} 

\subsection{Plant Augmentation and
Leading Agents for Non-achievable Steady States} \label{subseq:LeadingAgentsSynthesis}

We saw in Section \ref{subsec:CharacterizingForcibleSteadyStates}
that ${\mathrm y}$ can be forced as a steady-state of the system if and only
if ${\bf 0}\in\sum_{i\in\V}k_{i}^{-1}({\mathrm y}_{i})$. This can be troublesome in
applications, in which a certain steady-state can be desired for various
reasons. 

One method of coping with this problem is slightly augmenting the plant. This is done by
introducing a constant external reference signal $\mathrm{z}$ to some of the
nodes. In this direction, we consider a generalized notion of the nodal dynamical systems, 
\begin{equation}
\Sigma^{\prime}_i:\begin{cases}
\dot{x_{i}}=f_{i}(x_{i},u_{i}+\mathrm{z}_{i},\mathrm{w}_i)\\
y_{i}=h_{i}(x_{i},u_{i}+\mathrm{z}_{i},\mathrm{w}_i),
\end{cases}\label{eq:AugmentedPlant}
\end{equation}

Note that if a node is forced to have $\mathrm{z}_{i}=0$, it is of the unaugmented form
we studied earlier. We say that a node is a \emph{follower} if we force it to have $\mathrm{z}_{i}=0$, and we call it a \emph{leader} otherwise. We focus on the case in which there is only one leading node, $i_0\in\V$.
Our interest in leading nodes can be summarized by the following definition.
\begin{defn}
Let ${\mathrm y}\in(\R^{d})^{|\V|}$. We say that the leading node $i_{0}\in\V$
can \emph{force} ${\mathrm y}$ if there is some constant vector $\mathrm{z}_{i_0}$, such that the closed-loop system, with exogenous input $\mathrm{z}_{i_0}$ to the node $i_0$ and zero exogenous output for all nodes $j\ne i_0$, has ${\mathrm y}$ as a steady-state.
We say that the leading node $i_{0}\in\V$ is \emph{omnipotent} if
it can force any vector ${\mathrm y}\in\IM(k)$.
\end{defn}
\begin{thm}
Consider the network system $(\Sigma',\Pi,\mathcal{G})$ and suppose
all agents are MEICMP.  Furthermore let $i_0 \in \V$ be the only leading node (i.e., $\mathrm{z}_i=0$ for all $i\neq i_0$). Then $i_0$ is omnipotent.
\end{thm}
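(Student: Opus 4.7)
The plan is to apply Corollary \ref{cor:CharacterizationSynthesis} (the forceability criterion from Subsection \ref{subsec:CharacterizingForcibleSteadyStates}) to the \emph{augmented} network $(\Sigma',\Pi,\G)$ and then solve the resulting inclusion for the exogenous input $\mathrm z_{i_0}$. The key observation is that adding a constant offset $\mathrm z_i$ to the input of agent $i$ merely translates its steady-state input-output relation in the input coordinate, so the MEICMP framework still applies verbatim to the augmented plant.

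First I would compute the steady-state input-output relation $k_i'$ of the augmented agent $\Sigma'_i$ in \eqref{eq:AugmentedPlant}. Since both $f_i$ and $h_i$ depend on $u_i$ only through the combination $u_i+\mathrm z_i$, a direct substitution gives $(\mathrm u_i,\mathrm y_i)\in k_i'$ iff $(\mathrm u_i+\mathrm z_i,\mathrm y_i)\in k_i$, hence
\[
(k_i')^{-1}(\mathrm y_i)=k_i^{-1}(\mathrm y_i)-\mathrm z_i.
\]
A common shift of all input arguments leaves the cyclic sum $\sum y_i^T(u_i-u_{i-1})$ invariant, so $k_i'$ inherits maximal cyclic monotonicity from $k_i$, and passivity at each shifted equilibrium follows from that of $\Sigma_i$ at the corresponding unshifted equilibrium. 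Thus the augmented agents are themselves MEICMP, and Corollary \ref{cor:CharacterizationSynthesis} is applicable to $(\Sigma',\Pi,\G)$.

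Applying that corollary, $\mathrm y$ can be forced as a steady-state of $(\Sigma',\Pi,\G)$ for a suitable MEICMP choice of edge controllers precisely when ${\bf 0}\in\sum_{i\in\V}(k_i')^{-1}(\mathrm y_i)$. Using the identity above together with $\mathrm z_i={\bf 0}$ for every $i\neq i_0$ collapses this to the single inclusion
\[
\mathrm z_{i_0}\in\sum_{i\in\V}k_i^{-1}(\mathrm y_i).
\]
To conclude omnipotence I would exhibit such a $\mathrm z_{i_0}$ for an arbitrary $\mathrm y\in\IM(k)$. Because $k$ is the stacked relation, $\mathrm y\in\IM(k)$ is equivalent to each fibre $k_i^{-1}(\mathrm y_i)$ being nonempty; picking any $a_i\in k_i^{-1}(\mathrm y_i)$ and setting $\mathrm z_{i_0}:=\sum_{i\in\V}a_i$ satisfies the displayed inclusion. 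Since $\mathrm y$ was arbitrary in $\IM(k)$, the leading node $i_0$ is omnipotent.

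The main obstacle I anticipate is really the first step, namely a careful verification that the MEICMP property of the agents genuinely transfers to the augmented plant under the input translation by $\mathrm z_i$ (both the cyclic monotonicity of the translated relation and the passivity at each shifted equilibrium pair). Once this translation lemma is cleanly in hand, the rest of the argument is a direct reduction to Corollary \ref{cor:CharacterizationSynthesis} together with a pointwise selection from the nonempty fibres $k_i^{-1}(\mathrm y_i)$.
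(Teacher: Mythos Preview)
Your proposal is correct and follows essentially the same route as the paper: compute that the input shift yields $(k_i')^{-1}(\mathrm y_i)=k_i^{-1}(\mathrm y_i)-\mathrm z_i$, reduce the forceability criterion of Corollary~\ref{cor:CharacterizationSynthesis} to the inclusion $\mathrm z_{i_0}\in\sum_{i\in\V}k_i^{-1}(\mathrm y_i)$, and then select $\mathrm z_{i_0}$ from that (nonempty) set. The paper's proof is slightly terser in that it does not explicitly argue MEICMP is preserved under the input translation nor spell out why the sum is nonempty when $\mathrm y\in\IM(k)$; your added justification of both points is welcome but not a genuinely different approach.
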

\begin{proof}
Recall that the steady-state input-output relations for the $i$-th node with zero exogenous input were denoted by $k_i$, and denote the steady-state input-output relation for the constant exogenous input $\mathrm{z}_{i_0}$ by $k_{i_0,\mathrm{z}_{i_0}}$. Then
\[
k_{i_0,\mathrm{z}_{i_0}}(u_{i_0})=k_{i_0}(u_{i_0}+\mathrm{z}_{i_0}),\ k_{i_0,\mathrm{z}_{i_0}}^{-1}({\mathrm y}_{i_0})=k_{i_0}^{-1}({\mathrm y}_{i_0})-\mathrm{z}_{i_0}.
\]

Thus, we obtain that $i_{0}\in\V$ can force ${\mathrm y}\in\R^{d}$ if there is some $\mathrm{z}_{i_{0}}\in\R^{d}$ such that
\[
0\in\sum_{i\ne i_0}k_i^{-1}({\mathrm y}_{i})+k_{i_0,\mathrm{z}_{i_0}}^{-1}({\mathrm y}_{i_0})=\sum_{i\in\V}k_{i}^{-1}({\mathrm y}_{i})-\mathrm{z}_{i_{0}}. 
\]
Hence, if we pick $\mathrm{z}_{i_{0}}$ to be some vector in $\sum_{i\in\V}k_{i}^{-1}({\mathrm y}_{i})$, then we get that indeed $0\in\sum_{i\ne i_0}k_i^{-1}({\mathrm y}_{i})+k_{i_0,\mathrm{z}_{i_0}}^{-1}({\mathrm y}_{i_0})$,
allowing to force ${\mathrm y}$ as a steady-state. Thus $i_{0}$ is omnipotent.
\end{proof}

\section{Examples of MEICMP Systems}\label{sec.ConvGradSystems}
In this section, we focus on giving examples for MEICMP systems, showing that this property holds for many systems found in literature. We focus on two classes of examples, the first being convex-gradient systems with oscillatory terms, generalizing reaction-diffusion systems, gradient descent algorithms and more, and the second being oscillatory systems with damping, which are a natural extension of oscillators like springs and pendulums. We conclude the section with a simulation of a network of oscillatory systems with damping, exemplifying the results of sections \ref{sec.CM_And_Use} and  \ref{sec:Synthesis}. 

\subsection{Convex-Gradient Systems with Oscillatory Terms}

Many systems can be divided into two parts - an oscillatory term and a damping term. These include physical systems such as reaction-diffusion systems, Euler-Lagrange systems and port-Hamiltonian systems, as well as examples coming from optimization theory, in which gradient descent algorithms play a vital role \cite{Port-Hamiltonian_Systems,Port-Hamiltonian_Systems2,Convex_Optimization,Convex_Power_System}. Incremental passivity of these system has been studied in \cite{SymmetricSystems}.  Mathematically, these systems can be represented as
\begin{equation}\label{eq:gradient_system}
\dot{x} = -\nabla\psi(x) + Jx + Bu,
\end{equation}
where $\psi:\mathbb{R}^n\rightarrow\mathbb{R}$ is a function representing the gradient part (and the sign is chosen to give $\psi$ a potential-energy interpretation), $J$ is a skew-symmetric matrix representing the oscillatory part, and $Bu$ is the control input to the system, representing various forces (both control and exogenous ones) acting on the system. Our goal is to show that for a wide class of measurements $y=h(x,u)$, this system is MEICMP. We first focus on stability of this system.

On many occasions, the function $\psi$ is convex, and even strictly convex. For example, $\psi = \frac{\zeta}{2}x^2$ gives a linear damping term.
\begin{thm} \label{thm:ConvexGradient}
Assume that the system (\ref{eq:gradient_system}) is given, and that $\psi$ is a strictly convex function such that for $\underset{||x||\rightarrow \infty}{\lim} \frac{\psi(x)}{||x||}=\infty$. Suppose furthermore that $u$ is constant. Then there exists some \textcolor{black}{unique} $x_0$, which depends on $u$, such that all solutions converge to $x_0$ as $t\to\infty$.
\end{thm}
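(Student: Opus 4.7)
The plan is to first locate the equilibrium $x_0$, i.e., the solution to $\nabla\psi(x_0)=Jx_0+Bu$, and then use a distance-to-equilibrium Lyapunov function to establish global attractivity. Uniqueness is easy: if $x_0,x_1$ are both equilibria, subtracting the equilibrium equations and taking the inner product with $x_0-x_1$ kills the contribution of $J$ by skew-symmetry, leaving $(x_0-x_1)^T(\nabla\psi(x_0)-\nabla\psi(x_1))=0$, which forces $x_0=x_1$ by strict convexity of $\psi$.

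For existence of $x_0$, the obstruction is that $Jx$ is not a gradient, so $x_0$ is not the minimizer of any convex function and cannot be produced by direct variational methods. I would use a homotopy/Brouwer-degree argument. Define $H(x,t)=-\nabla\psi(x)+t(Jx+Bu)$ for $t\in[0,1]$. Any zero satisfies $\nabla\psi(x)=t(Jx+Bu)$, and taking the inner product with $x$ and using $x^TJx=0$ gives $x^T\nabla\psi(x)=t(Bu)^Tx$. Convexity of $\psi$ yields $x^T\nabla\psi(x)\ge\psi(x)-\psi(0)$, and the superlinearity $\psi(x)/\|x\|\to\infty$ then forces $\|x\|$ to lie in a bounded set independent of $t\in[0,1]$. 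Hence the Brouwer degree of $H(\cdot,t)$ on a sufficiently large ball is well-defined and constant in $t$; at $t=0$, $H(\cdot,0)=-\nabla\psi$ has a single zero (the global minimizer of $\psi$) of non-zero degree, so $H(\cdot,1)$ has a zero as well, giving our $x_0$.

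Once $x_0$ is in hand, the convergence proof is short. Define $V(x)=\tfrac12\|x-x_0\|^2$, which is positive definite and radially unbounded. Substituting $Bu=\nabla\psi(x_0)-Jx_0$ into the dynamics and computing along trajectories, the skew-symmetric term drops out, yielding
\[
\dot V = -(x-x_0)^T\bigl(\nabla\psi(x)-\nabla\psi(x_0)\bigr).
\]
Strict convexity of $\psi$ makes the right-hand side strictly negative whenever $x\neq x_0$, so $V$ is a strict Lyapunov function and the standard global asymptotic stability theorem (or LaSalle's invariance principle) delivers convergence of every trajectory to $x_0$.

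The main obstacle, as noted, is the existence step: one cannot simply minimize a convex function, so a topological or fixed-point argument is needed. An equally clean alternative is to bypass degree theory by first showing that trajectories are globally bounded (via $\dot W\le -\psi(x)+\psi(0)+\|Bu\|\,\|x\|$ for $W=\tfrac12\|x\|^2$, which is negative outside a large ball by coercivity) and then applying a fixed-point theorem for the non-expansive time-$T$ flow $\Phi_T$ on the resulting invariant ball; non-expansiveness of $\Phi_T$ follows from the very same skew-symmetry computation applied to the difference of two trajectories. Either route produces the desired $x_0$ and hence the theorem.
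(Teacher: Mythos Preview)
Your proof is correct and follows essentially the same line as the paper's: both establish existence of the equilibrium by combining the a~priori bound coming from $x^TJx=0$ and supercoercivity of $\psi$ with a Brouwer-type topological argument, and then use the quadratic Lyapunov function $V(x)=\tfrac12\|x-x_0\|^2$ together with strict monotonicity of $\nabla\psi$ to obtain global convergence. The only cosmetic differences are that the paper first absorbs $Bu$ into $\psi$ and then invokes Brouwer's fixed point theorem directly (via an ``outward-pointing'' lemma on a sphere), whereas you keep $Bu$ explicit and phrase the topological step as homotopy invariance of the Brouwer degree.
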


\textcolor{black}{The proof of the theorem is available in the appendix.} We now deal with the question of cyclic monotonicity. Consider the system
\begin{equation} \label{eq:ConvexGradientSystem}
\begin{cases}
\dot{x}=-\nabla\psi(x)+Jx+Bu\\
y=Cx+\rho(u),
\end{cases}
\end{equation}
where $\psi$ is a strictly convex function such that $\underset{\|x\|\rightarrow \infty}{\lim} \frac{\psi(x)}{\|x\|}=\infty$ and $J$ is a skew-symmetric matrix. By Theorem \ref{thm:ConvexGradient}, the state of the system converges as $t\rightarrow\infty$ whenever $u$ is constant, so the steady-state input-output relation can be defined.
\begin{thm} \label{thm:CMnessOfConvexGradient}
Consider a system of the form \eqref{eq:ConvexGradientSystem}. Suppose that $B$ and $C$ are invertible. Then the input-output relation is CM if the function $(B^{-1}\nabla\psi C^{-1}-B^{-1}JC^{-1})^{-1}+\rho$ is the gradient of a convex function. Furthermore, if this map is the gradient of a strictly convex function, then the input-output relation is SCM.
\end{thm}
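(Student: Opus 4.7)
My plan is to first derive the steady-state input-output relation of \eqref{eq:ConvexGradientSystem} in closed form, and then invoke Rockafellar's theorem (Theorem \ref{thm:Rockafellars}) to conclude cyclic monotonicity.

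First I would analyze the equilibrium condition $\dot x = 0$, which reads $\nabla\psi(x) - Jx = Bu$. Since $B$ is invertible, this gives
\[
u = B^{-1}\nabla\psi(x) - B^{-1}Jx.
\]
Introducing the change of variables $z = Cx$ (legitimate since $C$ is invertible), we can define the operator
\[
h(z) = B^{-1}\nabla\psi(C^{-1}z) - B^{-1}JC^{-1}z,
\]
so that every steady state satisfies $u = h(z)$. The existence-and-uniqueness statement in Theorem \ref{thm:ConvexGradient} says that for every constant input $u$ there is a unique steady-state $x$, which translates to the operator $h$ being a bijection from the state-space into the input-space. Hence $h$ is invertible and $z = h^{-1}(u)$. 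The output equation then reduces to $y = Cx + \rho(u) = z + \rho(u) = h^{-1}(u) + \rho(u)$, so the steady-state input-output relation is precisely the graph
\[
k_y = \{(u, h^{-1}(u)+\rho(u)) : u \in \mathrm{dom}(h^{-1})\}.
\]

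With the relation in this explicit form, the theorem becomes a direct application of Rockafellar's theorem. By hypothesis, the operator $h^{-1} + \rho$ coincides with $\nabla\Phi$ for some convex function $\Phi$. Therefore $k_y$ is contained in (and, modulo taking a maximal CM extension, equal to) the subgradient $\partial \Phi$, and Theorem \ref{thm:Rockafellars} immediately yields that $k_y$ is cyclically monotone. For the SCM part, if $\Phi$ is in fact strictly convex, then the same theorem gives that $\partial \Phi$ is maximal SCM, hence $k_y$ is SCM.

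The main obstacle, and the reason the argument is not entirely trivial, is interpreting the compact notation $(B^{-1}\nabla\psi C^{-1} - B^{-1}JC^{-1})^{-1}$ and justifying that the inverse actually exists as a well-defined single-valued map on its domain; this is exactly where the uniqueness provided by Theorem \ref{thm:ConvexGradient} enters in an essential way. Once this is clarified, writing $y$ as a function of $u$ through the intermediate variable $z=Cx$ is straightforward, and everything else reduces to bookkeeping and a single invocation of Rockafellar's theorem.
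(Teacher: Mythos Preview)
Your proposal is correct and follows essentially the same route as the paper: derive the steady-state relation $y_{ss} = (B^{-1}\nabla\psi C^{-1} - B^{-1}JC^{-1})^{-1}(u_{ss}) + \rho(u_{ss})$ via the substitution $z=Cx$, then conclude CM (resp.\ SCM) by Rockafellar's theorem. If anything, you are more explicit than the paper, which stops after displaying the steady-state relation and leaves both the well-definedness of the inverse (your appeal to Theorem~\ref{thm:ConvexGradient}) and the final invocation of Theorem~\ref{thm:Rockafellars} implicit.
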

\begin{proof}
In steady state, we have $\dot{x}=0$. Thus, if the steady-state input is $\mathrm u_{ss}$ and the state is $\mathrm x_{ss}$, then they relate by $\nabla\psi(\mathrm x_{ss})-J\mathrm x_{ss}=B\mathrm u_{ss}$. As $B$ is invertible, we have
$$
B^{-1}\nabla\psi(\mathrm x_{ss})-B^{-1}J\mathrm x_{ss}=\mathrm u_{ss}.
$$
However, if $\rho=0$ we have $\mathrm y_{ss}^{\rho=0}=C\mathrm x_{ss}$, so we have the relation
$$
B^{-1}\nabla\psi(C^{-1}\mathrm y_{ss}^{\rho=0})-B^{-1}JC^{-1}\mathrm y_{ss}^{\rho=0}=\mathrm u_{ss}.
$$
Thus,
$$
\mathrm y_{ss}^{\rho=0}=(B^{-1}\nabla\psi C^{-1}-B^{-1}JC^{-1})^{-1}(\mathrm u_{ss}).
$$
In the case of general $\rho$, we have the input/output relation
\begin{equation} \label{eq.ssrho}
\mathrm y_{ss}=(B^{-1}\nabla\psi C^{-1}-B^{-1}JC^{-1})^{-1}(\mathrm u_{ss})+\rho(\mathrm u_{ss}).
\end{equation}
\end{proof}
\vspace{-15pt}
\textcolor{black}{
\begin{cor} 
Consider a system of the form \eqref{eq:ConvexGradientSystem}. If $C=B^{T}=I$ and $\rho$ satisfies $(\nabla\psi-J)^{-1}+\rho=\nabla\chi$ for some convex function $\chi$, then the steady-state input-output relation is CM.
\end{cor}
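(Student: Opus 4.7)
The plan is to obtain the corollary as a direct specialization of Theorem \ref{thm:CMnessOfConvexGradient}, using Rockafellar's characterization (Theorem \ref{thm:Rockafellars}) to translate the convexity hypothesis on $\chi$ into the required cyclic monotonicity.

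First I would substitute the standing hypothesis $C = B^{T} = I$ (hence $B = I$ as well, since $B^{T} = I$) into the criterion supplied by Theorem \ref{thm:CMnessOfConvexGradient}. The theorem asks that the map
\[
\Phi(\mathrm u_{ss}) \;=\; (B^{-1}\nabla\psi\, C^{-1} - B^{-1} J C^{-1})^{-1} + \rho
\]
be the gradient of a convex function. With $B = C = I$ this collapses to $\Phi = (\nabla\psi - J)^{-1} + \rho$, which by the corollary's hypothesis equals $\nabla\chi$ for some convex function $\chi$.

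Next I would invoke Theorem \ref{thm:Rockafellars}: the (sub)gradient of a convex function is a maximal cyclically monotone relation. Since the steady-state input-output relation of the system can be written via \eqref{eq.ssrho} as the graph of $\Phi = \nabla \chi$, this immediately gives cyclic monotonicity. Strictly speaking, the invertibility assumption on $B$ and $C$ needed in Theorem \ref{thm:CMnessOfConvexGradient} is trivially satisfied here, and existence of the steady-state map is guaranteed by Theorem \ref{thm:ConvexGradient} (using strict convexity of $\psi$ and the coercivity condition $\psi(x)/\|x\| \to \infty$), so no auxiliary work is required.

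There is no real obstacle: the statement is essentially a packaging of Theorem \ref{thm:CMnessOfConvexGradient} together with Rockafellar's theorem, under the simplifying assumption $C = B^{T} = I$. The only point worth flagging in the write-up is to be explicit that $B^{T}=I$ together with $C=I$ forces $B=I$, so that the two matrix inverses in the criterion cleanly disappear and the criterion reduces to $(\nabla\psi - J)^{-1} + \rho = \nabla\chi$.
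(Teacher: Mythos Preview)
Your proposal is correct and follows essentially the same approach as the paper's proof, which simply notes that the claim follows directly from \eqref{eq.ssrho} upon substituting $C=B^{T}=I$. Your explicit invocation of Theorem~\ref{thm:Rockafellars} is harmless but unnecessary, since the CM conclusion is already built into the statement of Theorem~\ref{thm:CMnessOfConvexGradient}.
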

\begin{proof}
This follows directly from \eqref{eq.ssrho} and $C=B^T=I$.
\end{proof}
\begin{cor} 
Consider a system of the form \eqref{eq:ConvexGradientSystem}. If $J=0$ and $\rho(u)$ is the gradient of a convex function, then the steady-state input-output relation is CM.
\end{cor}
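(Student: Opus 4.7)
My plan is to reduce the corollary directly to Theorem \ref{thm:CMnessOfConvexGradient} by checking its main hypothesis in the special case $J=0$. With $J=0$ and under the standing normalization $B=C=I$ carried over from the preceding corollary, the requirement of Theorem \ref{thm:CMnessOfConvexGradient} reduces to verifying that the map $(\nabla\psi)^{-1} + \rho$ is the gradient of a convex function. The entire content of the proof thus boils down to establishing this property.

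The crucial ingredient is a standard identity from convex analysis. Since $\psi$ is strictly convex and satisfies $\psi(x)/\|x\|\to\infty$, its Legendre transform $\psi^{\star}$ is a differentiable convex function whose gradient obeys $\nabla\psi^{\star} = (\nabla\psi)^{-1}$ (see \cite{Rockafellar1}). Consequently $(\nabla\psi)^{-1}$ is itself the gradient of a convex function. Writing $\rho = \nabla\chi$ for the convex function $\chi$ supplied by the hypothesis of the corollary, I would then combine these observations via
\[
(\nabla\psi)^{-1} + \rho \;=\; \nabla\psi^{\star} + \nabla\chi \;=\; \nabla(\psi^{\star} + \chi),
\]
with $\psi^{\star} + \chi$ convex as a sum of convex functions. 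Applying Theorem \ref{thm:CMnessOfConvexGradient} then yields that the steady-state input-output relation is CM.

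I do not anticipate any real obstacle. The only non-trivial ingredient is the Legendre identity $(\nabla\psi)^{-1} = \nabla\psi^{\star}$, which is classical and is guaranteed by the coercivity and strict convexity of $\psi$ already imposed by Theorem \ref{thm:ConvexGradient}. Everything else is bookkeeping: translating the condition of Theorem \ref{thm:CMnessOfConvexGradient} into a sum of two subgradient maps and observing that the sum of two convex functions is convex.
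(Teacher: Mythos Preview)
Your core idea --- using Legendre duality to identify $(\nabla\psi)^{-1}=\nabla\psi^{\star}$ and then adding $\rho=\nabla\chi$ --- is exactly the mechanism the paper uses. However, the normalization $B=C=I$ is \emph{not} carried over from the preceding corollary; that corollary has $C=B^T=I$ as an explicit extra hypothesis, whereas the present corollary is stated under the standing assumptions of Theorem~\ref{thm:CMnessOfConvexGradient}, where $B$ and $C$ are merely invertible. So the condition you must verify is that $(B^{-1}\nabla\psi\, C^{-1})^{-1}+\rho$ is the gradient of a convex function, not just $(\nabla\psi)^{-1}+\rho$.

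The paper closes this gap with one extra step: it defines $\mu(x)=\psi(C^{-1}x)$, which is convex since $\psi$ is, and computes via the chain rule that $\nabla\mu(x)=(C^{-1})^{T}\nabla\psi(C^{-1}x)=B^{-1}\nabla\psi(C^{-1}x)$ (this uses the relation $C=B^{T}$, which the paper treats as implicit here). Having shown $B^{-1}\nabla\psi\, C^{-1}=\nabla\mu$, the paper then invokes exactly your Legendre argument: the inverse of the gradient of a convex function is the gradient of its conjugate, so $(B^{-1}\nabla\psi\, C^{-1})^{-1}=\nabla\mu^{\star}$, and adding $\rho=\nabla\chi$ finishes. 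Your proof becomes complete once you insert this chain-rule step (or else explicitly add $B=C=I$ as a hypothesis, in which case the corollary you are proving is weaker than the one stated).
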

\begin{proof}
The only thing that needs to be shown is that $B^{-1}\nabla\psi(C^{-1}u)$ is the gradient of a convex function. Note that this is enough, as the inverse of the gradient function of a convex function is itself the gradient of a convex function (due to duality of convex functions).
To do this, we define $\mu(x)=\psi(C^{-1}x)$. Then $\mu$ is convex
as $\psi$ is, and the gradient of $\mu$ is given by the chain rule.
The $i$-th entry of it is given by
\begin{align*}
\frac{\partial\mu}{\partial x_{i}}&=\sum_{j=1}^{n}\frac{\partial\psi}{\partial x_{j}}(C^{-1}x)\cdot\frac{\partial(C^{-1}x)_{j}}{\partial x_{i}} \\
&=\sum_{j=1}^{n}\frac{\partial\psi}{\partial x_{j}}(C^{-1}x)\cdot(C^{-1})_{ji} =\sum_{j=1}^{n}(C^{-1})_{ji}\frac{\partial\psi}{\partial x_{j}}(C^{-1}x) \\
&=[(C^{-1})^{T}\nabla\psi(C^{-1}x)]_{i}=[B^{-1}\nabla\psi(C^{-1}x)]_{i},
\end{align*}
meaning that $\nabla\mu(x)=B^{-1}\nabla\psi(C^{-1}x)$, proving the
last part.
\end{proof}}

\begin{rem} \label{rem.LinearRelations}
Theorem \ref{thm:CMnessOfConvexGradient}, \textcolor{black}{can be stated more easily for linear systems}.
Suppose that $B,C$ and $J$ are as above. Suppose further that $\psi$ has the form $\psi(x)=x^{T}Ax$ where $A>0$, and suppose we only seek for linear maps $\rho$ of the form $\rho(u)=Tu$ for some matrix $T$. The dynamical system now has the form,
\begin{equation}
\begin{cases}
\dot{x}=-(A-J)x+Bu\\
y=Cx+Tu
\end{cases}.
\end{equation}
We now require $\rho$ to satisfy 
$$
(B^{-1}\nabla\psi C^{-1}-B^{-1}JC^{-1})^{-1}+\rho=\nabla\chi,
$$
for some convex function $\chi$. If we again seek linear $\rho(u)=Tu$, then the left-hand side of the equation is a linear map, so $\nabla\chi$ must also be a linear map. Due to convexity of $\chi$, this is only possible if $\nabla\chi(u)=Du$ for some $D\ge 0$. We end up with following equation,
$
(B^{-1}AC^{-1}-B^{-1}JC^{-1})^{-1}+T\ge0 .
$
After some algebraic manipulation, we obtain
\begin{equation}
C(A-J)^{-1}B+T\ge 0,
\end{equation}
\textcolor{black}{Thus we conclude that a linear system 
\begin{align}
\begin{cases}
\dot{x} = Ax+Bu \\ y = Cx+Tu
\end{cases},
\end{align}
where $A$ is Hurwitz, is MEICMP if and only if $-CA^{-1}B+T$ is a positive-definite symmetric matrix.}
\end{rem}

\subsection{Oscillatory Systems with Damping}

We consider a damped oscillator with a linear forcing term of the form $\ddot{x}+\zeta\dot{x}+\omega^2 x=Bu$
where $B$ is a constant matrix, $u$ is the input vector, $\zeta>0$ is the damping factor. This system can also be represented via the first order ODE:

\begin{equation}
\begin{cases}
\dot{q} = \omega p\\
\dot{p} = -\omega q - \zeta p - Bu \
\end{cases}.
\end{equation}

One can easily generalize this formulation to more complex methods of damping:
\begin{equation}\label{eq:Damping}
\begin{cases}
\dot{q} = M p\\
\dot{p} = -M^T q - \nabla\psi(p) + Bu
\end{cases}.
\end{equation}

We are usually interested in the position as the output, i.e., $y=q$ for this system. We wish to find a condition that will assure this system is stable and MEICMP. We first prove the following result.
\begin{thm}
Consider a system of the form \eqref{eq:Damping}, and suppose that $M$ is invertible. Suppose furthermore that $\psi$ is a strictly convex function such that $\underset{\|x\|\rightarrow \infty}{\lim} \frac{\psi(x)}{\|x\|}=\infty$. Then the system  is stable for constant inputs.
Furthermore, if the system is injected with the constant input signal $\mathrm u$, then there is some $q_0$ such that all trajectories of the system satisfy $q\rightarrow q_0 , p\rightarrow p_0 = 0$ as $t\rightarrow\infty$. Even further, $q_0 = (M^T)^{-1}Bu-(M^T)^{-1}\nabla\psi(p_0)$
\end{thm}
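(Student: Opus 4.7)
The plan is to identify the unique equilibrium for the constant input $\mathrm u$, shift coordinates, and then apply LaSalle's invariance principle with a quadratic Lyapunov function. Setting $\dot q = \dot p = 0$, invertibility of $M$ immediately forces $p_0 = 0$ from $Mp_0 = 0$, and then the equation $M^T q_0 + \nabla\psi(p_0) = B\mathrm u$ yields $q_0 = (M^T)^{-1}(B\mathrm u - \nabla\psi(0))$, which matches the claimed formula $q_0 = (M^T)^{-1}B\mathrm u - (M^T)^{-1}\nabla\psi(p_0)$.

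Next I would work in shifted coordinates $\tilde q = q - q_0$, $\tilde p = p$. The shifted $\tilde p$-dynamics become cleaner if I introduce $\phi(p) := \psi(p) - \psi(0) - \nabla\psi(0)^T p$, a strictly convex function whose unique global minimum is $0$ with $\phi(0) = 0$ and $\nabla\phi(0) = 0$. In these coordinates the system reads $\dot{\tilde q} = M\tilde p$ and $\dot{\tilde p} = -M^T \tilde q - \nabla\phi(\tilde p)$, i.e., the equilibrium sits at the origin.

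The candidate Lyapunov function is the quadratic $V(\tilde q, \tilde p) = \tfrac{1}{2}\|\tilde q\|^2 + \tfrac{1}{2}\|\tilde p\|^2$. The key observation is that the oscillatory cross-terms telescope by the identity $\tilde q^T M \tilde p = \tilde p^T M^T \tilde q$, so $\dot V = -\tilde p^T \nabla\phi(\tilde p)$. Strict convexity of $\phi$ at its minimum gives $\tilde p^T \nabla\phi(\tilde p) \ge \phi(\tilde p) - \phi(0) \ge 0$, with equality only at $\tilde p = 0$, so $\dot V \le 0$. Since $V$ is radially unbounded and nonincreasing along trajectories, sublevel sets are compact and forward invariant, which supplies the boundedness needed for LaSalle.

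Finally I would apply LaSalle's invariance principle. The largest invariant set inside $\{\dot V = 0\} = \{\tilde p = 0\}$ must also satisfy $\dot{\tilde p} \equiv 0$; combined with $\nabla\phi(0) = 0$, this forces $M^T \tilde q = 0$, and invertibility of $M^T$ yields $\tilde q = 0$. Hence every trajectory converges to $(\tilde q, \tilde p) = (0,0)$, i.e., $q(t) \to q_0$ and $p(t) \to 0$. The main subtlety is the exact cancellation of the skew-adjoint cross-terms in $\dot V$; this works only because the coupling in \eqref{eq:Damping} is of the symplectic form $(M, -M^T)$. If that structure were disturbed, one would need either a weighted quadratic Lyapunov function or to fold $\phi$ directly into $V$, which complicates the sign analysis and is why the simple Hamiltonian-type $V$ is the right first attempt here.
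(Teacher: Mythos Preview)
Your argument is correct and follows essentially the same route as the paper: both identify the equilibrium, use the quadratic Lyapunov function $V=\tfrac12\|q-q_0\|^2+\tfrac12\|p-p_0\|^2$, and exploit the cancellation of the $(M,-M^T)$ cross-terms to obtain $\dot V=-(p-p_0)^T(\nabla\psi(p)-\nabla\psi(p_0))\le 0$. The only notable difference is the final step: you invoke LaSalle's invariance principle to force $\tilde q=0$ on the invariant set $\{\tilde p=0\}$, whereas the paper first argues $p\to 0$ and then reads off $q\to q_0$ from the algebraic relation $q=-(M^T)^{-1}(\dot p+\nabla\psi(p))$, which tacitly uses $\dot p\to 0$; your LaSalle route avoids that extra regularity assumption and is the cleaner way to close the argument.
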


\begin{proof}
As above, the assumption on $\psi$ allows us to absorb the linear term inside $\psi$, so we can assume $B\mathrm u =0$. Now, we take $p_0 = 0$ and $q_0 = -(M^T)^{-1} \cdot \nabla\psi(p_0)$. We note that the following relations hold:
\begin{align}\label{eq:char_equations_damping}
Mp_0 &= 0 ,\; M^T q_0 = -\nabla\psi(p_0),\; p_0^T\nabla\psi(p_0) = 0.
\end{align}
Now, consider the following Lyapunov function candidate,
\begin{equation}
F(p,q)=\frac{1}{2}(p-p_0)^T(p-p_0) +\frac{1}{2}(q-q_0)^T(q-q_0).
\end{equation}
It's clear that $F\ge 0$ and that $F=0$ if and only if $p=p_0$ and $q=q_0$. Furthermore, the derivative of $F$ along the trajectories is given by:
\begin{align*}
\dot{F}&=(p-p_0)^T\dot{p}+(q-q_0)^T\dot{q} \\
&= (p-p_0)^T(-M^T q - \nabla\psi(p))+ (q-q_0)^TMp \\
&= -(p-p_0)^T\nabla\psi(p) - (Mp_o)^Tq - (M^Tq_0)^Tp \\
& \overset{\eqref{eq:char_equations_damping}}{=} -(p-p_0)^T\nabla\psi(p) + p^T\nabla\psi(p_0)-p_0^T\nabla\psi(p_0) \\
&= -(p-p_0)^T(\nabla\psi(p)-\nabla\psi(p_0)).
\end{align*}
The last expression is non-positive, and furthermore is strictly negative if $p\ne p_0$ (as $\psi$ is strictly convex). Thus, it's clear that $p\rightarrow p_0=0$ as $t\rightarrow \infty$. Now, the equation driving $p$ is $\dot{p} = -M^T q - \nabla\psi(p)$, Which can be rewritten as
\begin{equation}
q=-(M^T)^{-1} (\dot{p}+\nabla\psi(p)).
\end{equation}
When the time grows infinite, the right hand side tends to $-(M^T)^{-1}(\nabla\psi(p_0))=q_0$, concluding the proof of the claim.
\end{proof}

Not only have we proved that the system is stable, we also found the input-output steady-state relation, which turns out to be linear. Thus, we can apply Remark \ref{rem.LinearRelations} to conclude the following corollary.  

\begin{cor}
The system (\ref{eq:Damping}) is MEICMP if and only if the matrix $(M^T)^{-1}B$ is positive semi-definite.
Furthermore, it is MEISCMP if and only if this matrix is positive definite.
\end{cor}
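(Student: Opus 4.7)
My plan is to derive the corollary as an immediate consequence of the steady-state analysis established in the preceding theorem, together with Remark \ref{rem.LinearRelations}. First I would extract the input-output relation explicitly: by that theorem, every trajectory of \eqref{eq:Damping} driven by a constant input $\mathrm u$ converges to a steady-state with $p_0 = 0$ and $q_0 = (M^T)^{-1}B\,\mathrm u - (M^T)^{-1}\nabla\psi(0)$. Since the measured output is $y = q$, the steady-state input-output map is the affine relation
\begin{equation*}
\mathrm u \longmapsto \mathrm y = S\,\mathrm u + c, \qquad S := (M^T)^{-1}B,\ \ c := -(M^T)^{-1}\nabla\psi(0).
\end{equation*}
This places the steady-state relation exactly in the affine form treated by Remark \ref{rem.LinearRelations}, with effective ``DC-gain'' matrix $S = (M^T)^{-1}B$.

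Next I would verify that such an affine relation is maximally (strictly) cyclically monotone if and only if $S$ is symmetric positive semi-definite (resp. positive definite). The ``if'' direction is immediate: when $S$ is symmetric positive semi-definite, the map $u \mapsto Su+c$ is precisely the gradient of the convex quadratic $\Psi(u) = \tfrac12 u^T S u + c^T u$, so Rockafellar's Theorem \ref{thm:Rockafellars} delivers maximal cyclic monotonicity; strict convexity (when $S$ is positive definite) upgrades this to the strict version. For the ``only if'' direction, Rockafellar's theorem supplies a convex function whose subgradient contains the affine relation, and affineness of the relation forces this convex function to be a quadratic plus a constant. Matching its gradient with $Su + c$ then forces $S$ to coincide with its transpose, while convexity forces $S$ to be positive semi-definite; strict convexity corresponds to positive definiteness.

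Combining these observations with Remark \ref{rem.LinearRelations} then transfers the MEICMP/MEISCMP characterization back to \eqref{eq:Damping}, yielding the claimed equivalences. The passivity half of MEICMP at each steady-state is supplied by essentially the same Lyapunov function $F(p,q) = \tfrac12\|p\|^2 + \tfrac12\|q-q_0\|^2$ used in the previous theorem, evaluated now along trajectories driven by a general (non-constant) input.

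The main obstacle I anticipate lies in the passivity step itself. A direct computation of $\dot F$ along \eqref{eq:Damping} with a non-constant input, together with the steady-state identity $-M^T q_0 = Bu_0 - \nabla\psi(0)$ and convexity of $\psi$, gives $\dot F \le p^T B(u - u_0)$, whose right-hand side involves $p$ rather than $y = q$. The hypothesis that $S = (M^T)^{-1}B$ is symmetric positive semi-definite is precisely what allows the factorization $B = M^T S$, so that $p^T B(u - u_0) = (Mp)^T S(u - u_0) = \dot q^T S(u - u_0)$. I expect that absorbing this term into a passivity inequality of the form $\dot{\tilde F} \le (y - y_0)^T(u - u_0)$ via a suitably modified storage function (exploiting $S$ symmetric) will be the most delicate step in fleshing out the full proof.
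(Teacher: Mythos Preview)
Your approach is essentially the same as the paper's: extract the affine steady-state relation $q_0 = (M^T)^{-1}B\,\mathrm u - (M^T)^{-1}\nabla\psi(0)$ from the preceding theorem and invoke Remark~\ref{rem.LinearRelations} to characterize (strict) cyclic monotonicity of an affine relation by positive semi-definiteness (resp.\ definiteness) of its linear part. The paper's entire proof is the single sentence preceding the corollary.

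You actually go further than the paper. The paper's justification addresses only the cyclic-monotonicity half of the MEICMP definition and is silent on the equilibrium-independent passivity requirement; you correctly flag that verifying the dissipation inequality $\dot F \le (y-y_0)^T(u-u_0)$ for $y=q$ is not immediate with the quadratic storage from the stability proof (the computation naturally produces $p^TB(u-u_0)=\dot q^T S(u-u_0)$ rather than $(q-q_0)^T(u-u_0)$). That observation is accurate, and the paper does not resolve it---it simply applies Remark~\ref{rem.LinearRelations}, whose concluding ``MEICMP'' claim is itself stated for \emph{linear} systems, so its direct application to the nonlinear system \eqref{eq:Damping} strictly only delivers the cyclic-monotonicity part. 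Your write-up is therefore at least as complete as the paper's, and your caveat about the storage-function modification is a genuine issue the paper leaves open.
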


We now demonstrate these results for oscillatory systems with damping by a simulation.
\begin{exam}
We consider a network of four damped MIMO oscillators,
\begin{align} \nonumber
\Sigma_i\begin{cases}
\begin{bmatrix} \dot{x}_1 \\ \dot{x}_2 \end{bmatrix} = \begin{bmatrix} \Omega_i x_2 \\ -D_ix_2 -\Omega_i^T (x_1-\mathrm{x}_i) + \Omega_i^{-1} u \end{bmatrix} \\ y = x_1
\end{cases}
\end{align} 
where $\mathrm{x}_i$ is the equilibrium point of the oscillator, $\Omega_i$ is a matrix consisting of the self frequencies, and $D_i$ is a damping matrix, which is positive-definite. The exact values of the matrices and $x_i$-s were randomly chosen. The underlying graph is given in Figure \ref{SimulationGraph}.

The steady-state input-output relation if $\Sigma_i$ can be computed to be $k_i(u_i)=(\Omega_i\Omega_i^T)^{-1}u+\mathrm{x}_i$, whose inverse is $k_i^{-1}(y)=(\Omega_i\Omega_i^T)(y-\mathrm{x_i})$. This gives us the convex function $K_i^\star(y_i)=\frac{1}{2}y^T\Omega_i\Omega_i^Ty-y^T\Omega_i\Omega_i^T\mathrm{x}_i$, which is strictly convex.

We wish to solve the synthesis problem for $y^\star$, where the controllers are taken to be identical and equal to
\[
\begin{cases}
\dot{\eta_e} = -\eta_e+\zeta_e \\
\zeta_e = \psi(\eta_e).
\end{cases}
\]
The function $\psi$ is given as
\[
\psi(x) = \arcsin\bigg(\frac{\log^2\big(\frac{e^x+1}{2}\big)\mathrm{sgn}(x)}{\log^2\big(\frac{e^x+1}{2}\big)+1}\bigg),
\]
where $\mathrm{sgn}(x)$ is the sign function. \textcolor{black}{One can verify that $\psi(0)=0$ and that $\psi$ is a monotone ascending function. The associated integral function is given by:
\[
\Gamma_e (\zeta_e) = \int_0^{\zeta_e} \arcsin\bigg(\frac{\log^2\big(\frac{e^x+1}{2}\big)\mathrm{sgn}(x)}{\log^2\big(\frac{e^x+1}{2}\big)+1}\bigg)dx.
\]}

\begin{figure} [t] 
    \centering
    \includegraphics[scale=0.3]{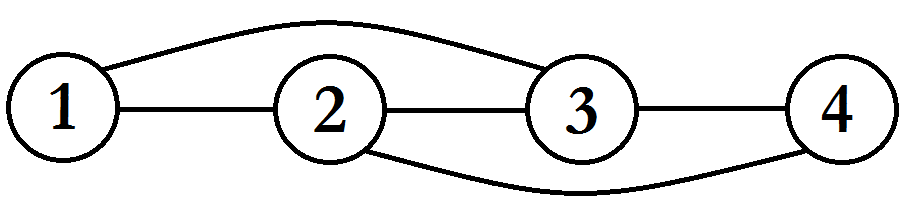}
    \caption{The graph used for the example.}
    \label{SimulationGraph}
\end{figure}

We then use the formation reconfiguration scheme to create an augmented controller, where we use the first node as a leading node. The control objective was changed every 30 seconds according to the following desired steady-states,
\small
\begin{align}\nonumber
y^{\star 1}&=[0,0,0,0,0,0,0,0]^T, & y^{\star 2}&=[1,1,2,2,3,3,4,4]^T, \\\nonumber
y^{\star 3}&=[1,2,3,4,5,6,7,8]^T, & y^{\star 4}&=[-1,0,0,0,1,0,2,2]^T, \\\nonumber
y^{\star 5}&=[2,2,2,2,2,2,-10,-10]^T,& &
\end{align}
\normalsize
where the first two entries refer to the first agent, the next two refer to the second agent, and so on. The output of the system can be seen in Figure \ref{UnifiedControlSchemePositionsFig}, exhibiting the positions of the agents $y(t)$. The blue line represents first coordinate, and the red one represents the second coordinate. We can see that the agents act as expected, converging to the desired formations.
\end{exam}
\begin{figure} [!t] 
    \centering
    \includegraphics[scale=0.7]{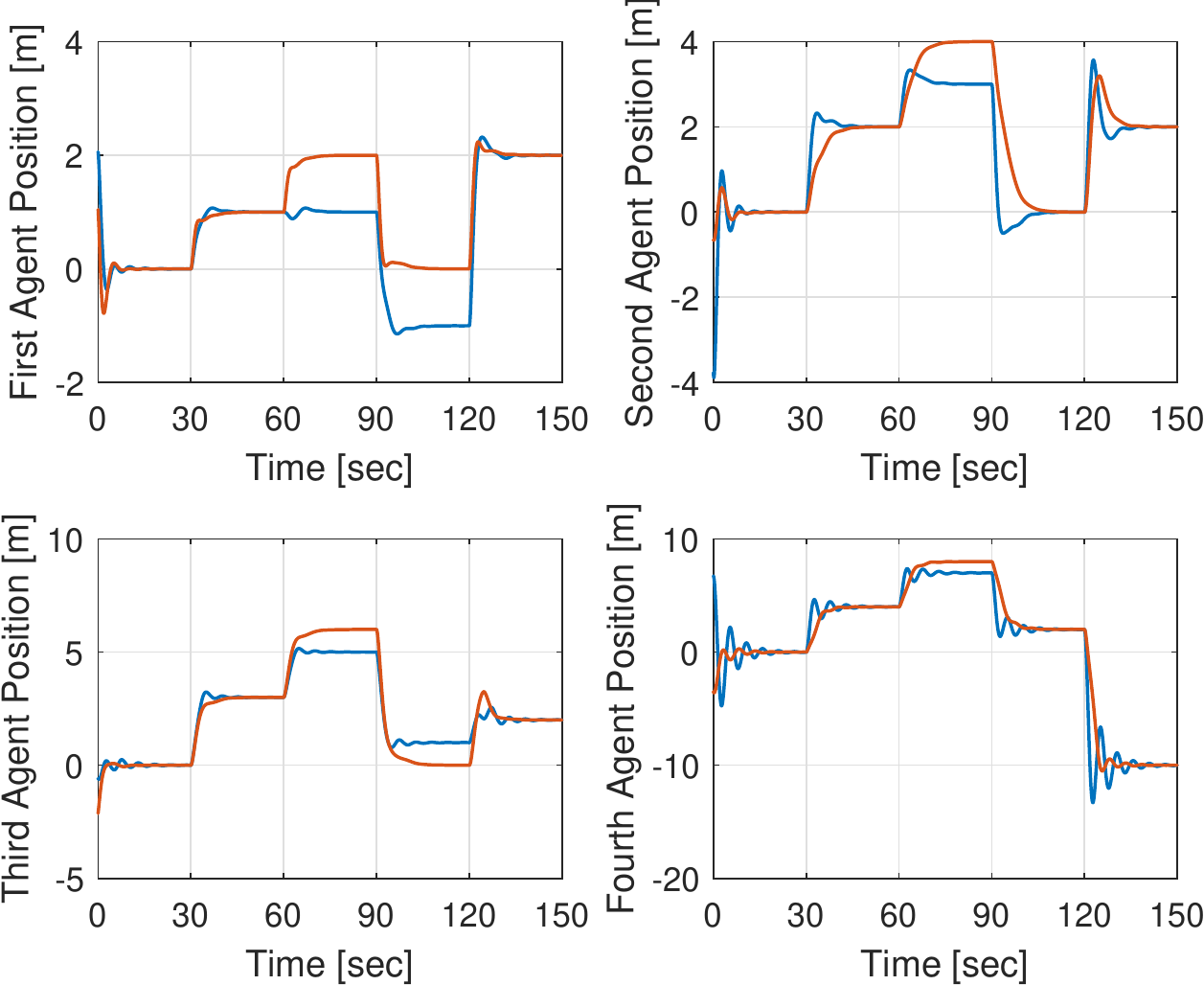}
    \caption{Formation control of damped MIMO oscillators.}
    \label{UnifiedControlSchemePositionsFig}
    \vspace{-15pt}
\end{figure}

\section{Conclusion}

We have found a profound connection between passivity-based cooperative control and network optimization theory in the spirit of Rockafellar \cite{Rockafeller}. This was done by introducing the notion of maximal equilibrium-independent cyclically monotone passive systems, and showing that such systems converge to a solution of a collection of network optimization problems, bonded by duality.
Furthermore, we have shown that in the case of output-agreement problems, the output agreement steady-state is optimal with respect to the optimal flow and optimal potential problems. This connection creates a dictionary between system signals (like outputs and inputs) and network optimization variables (potentials and node divergences, respectively).
We have established analogous inverse optimality and duality results for general networks of maximal equilibrium-independent cyclically monotone passive systems.
Using methods from subgradient theory and convex optimization, we have established clear criteria for solvability of the synthesis problem for a diffusive coupling of maximal equilibrium-independent cyclically monotone passive systems, and a practically-justifiable plant augmentation procedure to solve the synthesis problem if it is not feasible for the desired output. We have shown a synthesis for the controllers, and exhibited a controller augmentation procedure that allows any set of maximally equilibrium-independent cyclically monotone passive controllers to be used. This theory was exemplified by simulating a system of damped planar oscillators and correctly predicted the asymptotic state of the system, using both the minor controller and plant augmentation procedures. 
We believe that this strong connection between passivity-based cooperative control and network optimization theory can lead to new analysis methods for cooperative control problems, through the means of network optimization problems.

\textcolor{black}{This is a significant extension of the framework connecting multi-agent systems and cooperative control to network optimization, first presented in \cite{SISO_Paper} and later developed in \cite{LCSS_Paper}. Possible further research directions can include extensions of the framework (e.g., to directed graphs, passivity-short systems, and systems with different input and output dimension), or applications of the framework to yield various results in multi-agent systems (e.g. fault detection and isolation, network identification, robustness, etc.). }
\if(0)
\appendix
\section{Proving That CM Relations Generalize Monotone Relations}
In theorem \ref{thm.CM_Generalizes_Monotone}, we showed that in one dimension, CM relations are the same as monotone relations. However, we needed two lemmas for the proof, and they were cited above. For the sake of completeness, the proofs of the lemmas are given below. As for the first lemma:
\begin{lem}
Let $f:I\to\mathbb{R}$ be a monotone increasing function defined on an interval $I$. Fix some $x_0\in I$ and define $g:I\to\mathbb{R}$ by $g(x)=\int_{x_0}^{x}{f(t)dt}$. Then $g$ is a convex function, and the sub-differential $\partial g$ contains the set $\{(x,f(x): x\in I\}$. Furthermore, if $f$ is strictly ascending, then $g$ is strictly convex
\end{lem}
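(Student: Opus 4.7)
The plan is to prove all three claims—convexity of $g$, the sub-gradient inclusion $f(x)\in\partial g(x)$, and strict convexity in the strictly-increasing case—by exploiting elementary integral inequalities derived from the monotonicity of $f$, together with the standard chord-slope characterization of convexity. First I would note that every monotone function on an interval is Riemann-integrable on every compact sub-interval, so $g$ is well-defined; it is also locally Lipschitz, since $f$ is locally bounded.

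For convexity I would use the characterization that $g$ is convex on $I$ iff for every triple $a<b<c$ in $I$ one has $[g(b)-g(a)]/(b-a)\le [g(c)-g(b)]/(c-b)$. Since $f$ is increasing, $f(t)\le f(b)$ on $[a,b]$ and $f(t)\ge f(b)$ on $[b,c]$, whence
\[
\frac{g(b)-g(a)}{b-a}=\frac{1}{b-a}\int_a^b f(t)\,dt \le f(b)\le \frac{1}{c-b}\int_b^c f(t)\,dt=\frac{g(c)-g(b)}{c-b}.
\]
For the sub-gradient inclusion I would directly verify $g(y)\ge g(x)+f(x)(y-x)$ for all $x,y\in I$. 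If $y>x$, then $f(t)\ge f(x)$ on $[x,y]$, giving $g(y)-g(x)=\int_x^y f(t)\,dt\ge f(x)(y-x)$; if $y<x$, then $f(t)\le f(x)$ on $[y,x]$, so $g(x)-g(y)\le f(x)(x-y)$, which rearranges to the same inequality. Hence $f(x)\in\partial g(x)$, so every pair $(x,f(x))$ lies in $\partial g$.

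For strict convexity under strict monotonicity I would sharpen the central inequality above: picking any $t_0\in(a,b)$, strict monotonicity gives $f(b)-f(t)\ge f(b)-f(t_0)>0$ for all $t\in[a,t_0]$, so $\int_a^b (f(b)-f(t))\,dt\ge (f(b)-f(t_0))(t_0-a)>0$, yielding $[g(b)-g(a)]/(b-a)<f(b)$; a symmetric argument on $[b,c]$ gives $f(b)<[g(c)-g(b)]/(c-b)$. Strict chord-slope monotonicity is equivalent to strict convexity of $g$. The main subtlety lies in this last step, since a monotone $f$ may have countably many jump discontinuities; the intermediate-point trick, however, avoids any use of continuity of $f$ or measure-theoretic machinery, so the argument remains elementary throughout.
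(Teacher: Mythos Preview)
Your proof is correct and rests on the same elementary idea as the paper's---bounding $\int f$ on a sub-interval by an endpoint value of $f$ times the interval length---but you organize the convexity step differently. The paper verifies the defining inequality $\alpha g(x)+(1-\alpha)g(y)\ge g(\alpha x+(1-\alpha)y)$ directly: it exploits the freedom to shift the base point to $x_0=x$ and then runs a chain of integral inequalities splitting at $z=\alpha x+(1-\alpha)y$. You instead invoke the three-point chord-slope criterion, which packages the same estimates more compactly and avoids the algebra with $\alpha$. Your sub-gradient argument is word-for-word the paper's two-case check. For strict convexity the paper simply asserts that the inequalities become strict when $f$ is strictly increasing, whereas your intermediate-point trick makes explicit why the integral inequality is strict without appealing to continuity of $f$; this is a genuine (if minor) improvement in rigor.
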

\begin{proof}
We first take some arbitrary $x,y\in I$ and some $\alpha\in [0,1]$. Assume without loss of generalization that $x<y$. Note that changing the point ${x_0}$ changes the value of $g$ by a constant, unharming all of the wanted properties. Thus we are free to choose $x_0$ as we want when trying to prove the claim. Choose $x_0 = x$. Then because $f$ is monotonic ascending, if we denote $z = \alpha x + (1-\alpha)y$ then:
\small
\begin{align}
& \alpha g(x) + (1-\alpha) g(y) = \alpha\int_{x}^{x}{f(t)dt} + (1-\alpha) \int_{x}^{y} {f(t)dt} \nonumber\\
= &(1-\alpha)\int_{x}^{z} {f(t)dt} + (1-\alpha) \int_{z}^{y}{f(t)dt} \nonumber\\
\ge &(1-\alpha)\int_{x}^{z} {f(t)dt} + (1-\alpha) (y-z)f(z) \nonumber\\
= &(1-\alpha)\int_{x}^{z} {f(t)dt} + (1-\alpha)\alpha (y-x) f(z) \nonumber\\
= &(1-\alpha)\int_{x}^{z} {f(t)dt} + \alpha (z - x) f(z) \nonumber\\
\ge &(1-\alpha)\int_{x}^{z} {f(t)dt} + \alpha \int_{x}^{z} {f(t)dt} = \int_{x}^{z} {f(t)dt} = g(z)\nonumber .
\end{align}
\normalsize
this shows that $g$ is convex. Note that if $f$ was strictly monotonic ascending, then the inequalities are strict if $x\ne y$.
We now take some $x_1\in I$ and claim that $(x_1,f(x_1))$ is contained in the sub-differential of $g$. Indeed, we need to take some general $x\in\mathbb{R}$ and show that $g(x) - g(x_1) \ge f(x_1)(x-x_1)$. This can be read as $\int_{x_1}^{x} {f(t)dt} \ge f(x_1)(x-x_1)$. We consider two cases:
\begin{itemize}
\item[i)]  $x \ge x_1$: In this case the minimum value of the function inside the integral is $f(x_1)$ (as $f$ is monotonic ascending), and the inequality is clear;
\item[ii)] $x \le x_1$: In this case, the integral is equal to $-\int_{x}^{x_1} {f(t)dt}$, which can be bounded from below by $-f(x_1)(x_1-x)$ because $f$ is monotonic ascending. 
\end{itemize}
This completes the proof of the lemma.
\end{proof}

And as for the second lemma:
\begin{lem}
Let $R$ be a monotone relation. Then for all but countably many $u\in \mathbb{R}$, there is at most one $y\in \mathbb{R}$ such that $(u,y)\in R$.
\end{lem}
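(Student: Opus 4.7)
The plan is to isolate the set of ``bad'' inputs $S = \{u \in \R : \text{there exist } y_1 \neq y_2 \text{ with } (u, y_1), (u, y_2) \in R\}$ and show directly that $S$ is at most countable. For each $u \in S$, set $\ell(u) = \inf\{y : (u, y) \in R\}$ and $r(u) = \sup\{y : (u, y) \in R\}$, which are extended real numbers with $\ell(u) < r(u)$; the open interval $J(u) = (\ell(u), r(u))$ is therefore nonempty.

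The key step is to show that whenever $u_1, u_2 \in S$ satisfy $u_1 < u_2$, the intervals $J(u_1)$ and $J(u_2)$ are disjoint. This will follow from a single application of monotonicity: for any $y_i$ with $(u_i, y_i) \in R$, the defining inequality $(u_2 - u_1)(y_2 - y_1) \geq 0$ combined with $u_2 > u_1$ forces $y_2 \geq y_1$. Taking the supremum in $y_1$ and the infimum in $y_2$ yields $r(u_1) \leq \ell(u_2)$, which makes $J(u_1)$ and $J(u_2)$ disjoint as desired.

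To conclude, I would invoke the standard fact that any pairwise disjoint family of nonempty open intervals in $\R$ is at most countable, obtained by choosing a rational in each interval. Applying this to the family $\{J(u) : u \in S\}$ gives an injection $S \hookrightarrow \mathbb{Q}$, so $S$ is countable, which is exactly the statement of the lemma.

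No serious obstacle is anticipated: the argument rests entirely on the monotonicity inequality and the elementary topological fact about disjoint open intervals. The only subtlety worth noting is that $\ell(u)$ and $r(u)$ need not themselves lie in the fibre $\{y : (u,y) \in R\}$ (the relation $R$ need not be closed), but this does not affect the disjointness of the \emph{open} intervals $J(u)$, which is all the counting argument requires.
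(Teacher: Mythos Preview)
Your argument is correct and is the standard clean proof of this classical fact. The paper, however, takes a somewhat different route: it fixes a bounded interval $[u_-,u_+]$ and a threshold $\epsilon>0$, picks $(u_\pm,y_\pm)\in R$, and uses monotonicity in a telescoping fashion to conclude that if there were $N$ points $u$ in that interval with a jump exceeding $\epsilon$, then $y_+-y_-\ge N\epsilon$; hence $N$ is finite, and the bad set is recovered as a countable union over $\epsilon=1/k$ and over expanding intervals. Your disjoint-interval argument is more direct and avoids the double limiting process, while the paper's version yields the slightly more quantitative statement that on any bounded range only finitely many jumps exceed a given size. Both rest on the same monotonicity inequality; they just package the counting differently.
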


\begin{proof}
We'll show that for any $u_{-},u_{+}$ and any $\epsilon>0$, there are only finitely many $u$-s between them such that there are $y_0,y_1$ such that $y_1 - y_0 > \epsilon$. Fix some corresponding $y_{+}$ and $y_{-}$.
It's clear that if $u$ has the form above, and $u_a>u>u_b$ and $(u_a,y_a),(u_b,y_b)\in R$ then $y_b-y_a > \epsilon$ (as $R$ is monotonic).
Thus, if there are $N$ points like $u$ above in the segment between $u_{+}$ and $u_{-}$ then we get that $f(y_{+}) - f(y_{-}) \ge N\epsilon$. Thus $N$ is a finite number, and discretely decreasing $\epsilon$ and expending the range completes the proof.
\end{proof}
\fi
\bibliographystyle{ieeetr}
\bibliography{main}

\appendix
This appendix deals with the proof of Theorem \ref{thm:ConvexGradient}. The proof is rather lengthy, and requires two lemmas The idea is to try and construct a quadratic Lyapunov function of the form $V(x)=\frac{1}{2}(x-x_0)^T(x-x_0)$, where the point $x_0$ is a fixed point of the flow. Thus, we need to find a point $x_0$ which satisfies $\nabla\psi(x_0)-Bu = Jx_0$. The following two lemmas will assure that such a point exists. 

\begin{lem} \label{lem.pointsOutward}
Let $\chi$ be a strictly convex function, and suppose that $\frac{\chi(x)}{||x||}\to\infty$ as $||x||\to\infty$. Then there exists some $\rho>0$, such that for every point $x\in\mathbb{R}$ satisfying $\|x\|=\rho$, the inequality $\langle x, \nabla\chi(x)\rangle\ge 0$ holds.
\end{lem}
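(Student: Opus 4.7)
The plan is to combine the defining subgradient inequality for convex functions with the coercivity assumption $\chi(x)/\|x\|\to\infty$, in order to reduce the claim to the scalar inequality $\chi(x)\ge\chi(0)$ on a sufficiently large sphere.

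First I would use the defining gradient inequality of the convex function $\chi$, applied at $x$ and evaluated against the point $0$:
\begin{equation*}
\chi(0) \;\ge\; \chi(x) + \langle \nabla\chi(x),\, 0 - x \rangle,
\end{equation*}
which rearranges to $\langle x, \nabla\chi(x)\rangle \ge \chi(x) - \chi(0)$. Thus it suffices to exhibit some radius $\rho>0$ such that $\chi(x) \ge \chi(0)$ whenever $\|x\|=\rho$.

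Then I would invoke superlinearity: since $\chi(x)/\|x\|\to\infty$ as $\|x\|\to\infty$, there exists $R>0$ such that $\chi(x) \ge \|x\|$ whenever $\|x\|\ge R$. Choosing any $\rho \ge \max\{R,\, \chi(0)\}$ ensures that for every $x$ with $\|x\|=\rho$ one has $\chi(x) \ge \rho \ge \chi(0)$, and combining this with the previous inequality yields $\langle x, \nabla\chi(x)\rangle \ge 0$, as required.

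There is no real obstacle here once the subgradient reformulation is in place; the growth assumption does all the work. Two small remarks are worth making: strict convexity of $\chi$ is not actually used in the argument (ordinary convexity suffices), and the same derivation goes through verbatim if $\nabla\chi(x)$ is replaced by any element of the subdifferential $\partial\chi(x)$, so differentiability of $\chi$ is likewise inessential — a robustness that is convenient in view of how the lemma will be applied in the proof of Theorem~\ref{thm:ConvexGradient}.
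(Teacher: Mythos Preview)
Your proof is correct and takes a genuinely different route from the paper's. The paper fixes a unit direction $\theta$, studies the one-variable restriction $f_\theta(r)=\chi(r\theta)$, uses strict convexity to obtain a unique zero-crossing $r_\theta$ of $f_\theta'$, and then runs a compactness/contradiction argument on the sphere to show that $\sup_\theta r_\theta<\infty$; $\rho$ is chosen beyond this bound. Your argument instead exploits the first-order convexity inequality at the single reference point $0$, which gives $\langle x,\nabla\chi(x)\rangle\ge\chi(x)-\chi(0)$ directly, and then superlinear growth handles the right-hand side uniformly in $x$ with no directional decomposition at all. Your approach is shorter and, as you note, does not need strict convexity (or even differentiability, since the same inequality holds for any subgradient). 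What the paper's approach buys is slightly more information: it actually identifies, for each direction $\theta$, the exact threshold $r_\theta$ at which $\langle x,\nabla\chi(x)\rangle$ changes sign, whereas your argument only produces some (possibly much larger) $\rho$. For the purposes of Theorem~\ref{thm:ConvexGradient} this extra precision is not used, so your argument is the cleaner one.
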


\begin{proof}
Fix some arbitrary unit vector $\theta\in\mathbb{R}^n$, and consider the convex function $f_\theta (r) = \chi(r\theta)$ and its derivative $\frac{df_\theta}{dr}=\nabla\chi(r\theta)^T\theta$. Note that because $\chi$ grows faster than any linear function, the same can be said about $f_\theta$, and in particular, it's derivative tends to infinity.
Furthermore, the function $f_\theta$ is strictly convex, so $\frac{df_\theta}{dr}$ is strictly ascending, Thus there is some $r_\theta$ such that $\frac{df_\theta}{dr}>0$  if $r>r_{\theta}$ and $\frac{df_\theta}{dr}<0$  if $r<r_{\theta}$.

Our task now is to show that $r_\theta$ is a bounded function of $\theta$. Suppose not, and let $\theta_n$ be a sequence of unit vectors such that $r_{\theta_n}\to\infty$. Passing to a subsequence, we may assume without loss of generality that $\theta_n\to\theta$ for some unit vector $\theta\in\mathbb{R}^n$. There is some $N$ such that if $n\ge N$ then $r_{\theta_n}>r_\theta+1=t$. In particular, $\frac{df_{\theta_n}}{dr}|_{r=t}\le 0$ for $n\ge N$ but $\frac{df_{\theta}}{dr}|_{r=t} > 0$. This is impossible, as the first expression is equal to $\nabla\chi(t\theta_n)^T\theta_n$, which converges to the second expression, which is $\nabla\chi(t\theta)^T\theta$.  Thus, there is some $\rho>0$ such that $r_\theta<\rho$  for all unit vectors $\theta$, meaning that if $x$ is a vector of norm $\rho$, then for $\theta=\frac{x}{||x||}$:
\begin{equation}
\langle \nabla\chi(x),x \rangle = \rho \langle \nabla \chi(\rho\theta),\theta \rangle =\rho\frac{df_\theta}{dr}(\rho) \ge 0.
\end{equation}
\end{proof}

\begin{lem}\label{lem.qy}
Let $Q:\mathbb{R}^n\rightarrow\mathbb{R}^n$ be a continuous vector field, and let $\rho>0$. Suppose that for any vector $x$ satisfying $\|x\|=\rho$, the inequality $\langle Q(x),x\rangle \ge 0$ holds. Then there exists some point $y$ satisfying $\|y\|\le \rho$ such that $Q(y)=0$.
\end{lem}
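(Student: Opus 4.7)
The plan is to prove the lemma by contradiction, invoking Brouwer's fixed point theorem. The hypothesis that $\langle Q(x),x\rangle\ge 0$ on the sphere $\|x\|=\rho$ is a ``boundary outward-pointing'' condition (a Poincar\'e--Bohl-type condition), and it is well-known that such conditions force a zero inside the ball via a topological degree argument. Using Brouwer's theorem directly gives the cleanest route.

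First, I would set $B_\rho = \{x\in\mathbb{R}^n : \|x\|\le \rho\}$ and assume toward contradiction that $Q(y)\neq 0$ for every $y\in B_\rho$. Under this assumption, $\|Q(y)\|$ is continuous and nowhere zero on $B_\rho$, so I can define the continuous map
\[
f:B_\rho\to B_\rho,\qquad f(y) = -\rho\,\frac{Q(y)}{\|Q(y)\|}.
\]
Note that $f$ actually takes values in the sphere $\partial B_\rho\subset B_\rho$, so it is a well-defined self-map of the compact convex set $B_\rho$.

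Next, I would apply Brouwer's fixed point theorem to obtain a point $y^\star\in B_\rho$ with $f(y^\star)=y^\star$. Since $f$ maps into $\partial B_\rho$, this forces $\|y^\star\|=\rho$. The fixed-point equation then reads
\[
y^\star = -\rho\,\frac{Q(y^\star)}{\|Q(y^\star)\|},
\]
which, rearranged, gives $Q(y^\star) = -\frac{\|Q(y^\star)\|}{\rho}\, y^\star$. Taking the inner product with $y^\star$ yields
\[
\langle Q(y^\star), y^\star\rangle = -\frac{\|Q(y^\star)\|}{\rho}\, \|y^\star\|^2 = -\rho\,\|Q(y^\star)\| < 0,
\]
directly contradicting the hypothesis that $\langle Q(x),x\rangle\ge 0$ whenever $\|x\|=\rho$. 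Hence the contrary assumption fails and some $y\in B_\rho$ with $Q(y)=0$ must exist.

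I do not anticipate any real obstacles: Brouwer's theorem handles the existence step, and the only subtle point is verifying that the assumption $Q\neq 0$ on $B_\rho$ makes the normalized map $f$ continuous on the whole ball, which is immediate from continuity of $Q$ and the lower bound $\inf_{B_\rho}\|Q\|>0$ (by compactness). If one wished to avoid Brouwer entirely, an alternative would be a homotopy/degree argument: the vector field $H_t(x) = (1-t)x + t\,Q(x)$ on $\partial B_\rho$ is never antipodal to the outward normal under the given sign condition, so $Q$ and the identity have the same Brouwer degree on $B_\rho$, namely $1$, forcing a zero inside. Both routes are essentially the same statement, and the Brouwer-based argument above is the most self-contained.
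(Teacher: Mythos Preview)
Your proof is correct and follows essentially the same approach as the paper: both argue by contradiction, define the normalized map $f(y)=-\rho\,Q(y)/\|Q(y)\|$ from the closed ball into its boundary, apply Brouwer's fixed point theorem, and derive $\langle Q(y^\star),y^\star\rangle<0$ at the fixed point to contradict the boundary hypothesis. The arguments are nearly identical in structure and detail.
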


In order to prove the lemma, we use a theorem from algebraic topology.
\begin{thm}[Brouwer's Fixed Point Theorem \cite{Algebraic_Topology}]
Let $D$ be a closed ball inside $\mathbb{R}^n$, and let $f:D\rightarrow D$ be a continuous map. Then $f$ has a fixed point.
\end{thm}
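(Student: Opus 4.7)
The plan is to establish Brouwer's fixed point theorem via the classical combinatorial route using Sperner's Lemma, which is the most self-contained approach. Since every closed ball in $\R^n$ is homeomorphic to the standard $n$-simplex $\Delta^n = \{x\in\R^{n+1}: x_i\ge 0,\ \sum_{i=0}^{n}x_i=1\}$, and the existence of a fixed point is invariant under homeomorphism (if $h:\Delta^n\to D$ is a homeomorphism and $\tilde f = h^{-1}\circ f\circ h$ has a fixed point $\tilde x$, then $h(\tilde x)$ is a fixed point of $f$), it suffices to prove the theorem for continuous $f:\Delta^n\to\Delta^n$.

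First I would state and prove Sperner's Lemma by induction on $n$: for any simplicial subdivision $T$ of $\Delta^n$ together with a coloring $c$ of the vertices of $T$ by labels in $\{0,1,\ldots,n\}$ satisfying that any vertex lying on the face spanned by $\{e_i: i\in I\}$ receives a color in $I$, the number of ``rainbow'' $n$-simplices of $T$ (those whose vertex-colors realize all $n+1$ labels) is odd, hence positive. The inductive step counts $(n-1)$-faces of $T$ whose vertex-colors form $\{0,\ldots,n-1\}$: interior such faces are shared by exactly two $n$-simplices of $T$, while boundary ones are forced by the Sperner condition to lie entirely in the face $x_n=0$ of $\Delta^n$, itself a standard $(n-1)$-simplex to which the induction hypothesis applies, giving an odd boundary count and hence an odd number of rainbow $n$-simplices in $T$.

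Next I would apply Sperner's Lemma to $f$. For each vertex $v$ of a subdivision $T$, let $I(v)=\{i:v_i>0\}$; since $\sum_i f(v)_i = 1 = \sum_{i\in I(v)} v_i$ and $v_j=0$ for $j\notin I(v)$, not every index $i\in I(v)$ can satisfy $f(v)_i>v_i$, so the set $\{i\in I(v):f(v)_i\le v_i\}$ is nonempty and I would define $c(v)$ to be any element of it. This satisfies the Sperner condition: if $v$ lies on the face spanned by $\{e_i:i\in I\}$, then $I(v)\subseteq I$, so $c(v)\in I(v)\subseteq I$. Applying Sperner's Lemma to a sequence of subdivisions $T_k$ whose mesh tends to zero produces rainbow simplices $S_k$; by compactness of $\Delta^n$, the vertices of $S_k$ share a common limit point $x^\star\in\Delta^n$ along some subsequence. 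By continuity of $f$ and the defining property of the coloring on each color class, passing to the limit yields $f(x^\star)_i\le x^\star_i$ for every $i\in\{0,\ldots,n\}$; since $\sum_i f(x^\star)_i = \sum_i x^\star_i = 1$, equality holds coordinatewise, giving $f(x^\star)=x^\star$.

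The main obstacle is the inductive proof of Sperner's Lemma itself, where careful parity bookkeeping is required to show that each interior $(n-1)$-face contributes $0$ or $2$ to the total count of ``almost-rainbow'' facets of rainbow simplices, while boundary contributions produce the correct odd count via the induction hypothesis. Once Sperner's Lemma is secured, the reduction of Brouwer to Sperner is the elegant observation that the simplex constraint $\sum_i v_i = \sum_i f(v)_i = 1$ forces the coloring $c$ to be simultaneously Sperner-admissible and sensitive to the failure of the fixed-point equation at each vertex, so that a limit of shrinking rainbow simplices pins down a true fixed point.
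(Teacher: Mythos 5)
Your proof is correct, but it cannot be compared against a proof in the paper because the paper offers none: Brouwer's theorem is imported wholesale from an algebraic topology reference, where the standard derivation runs through the no-retraction theorem (there is no continuous retraction of the closed ball onto its boundary sphere, proved via the homology groups $H_{n-1}(S^{n-1})\ne 0$ or via degree theory). Your route through Sperner's Lemma is genuinely different and entirely self-contained: the reduction to the standard simplex, the parity induction for Sperner's Lemma (interior almost-rainbow facets counted twice, boundary ones forced into the face $x_n=0$ and counted once, giving an odd total), the admissibility of the coloring $c(v)\in\{i\in I(v): f(v)_i\le v_i\}$ (nonempty because $\sum_i f(v)_i=\sum_{i\in I(v)}v_i=1$), and the limit argument over subdivisions of shrinking mesh are all sound; the final step, that $f(x^\star)_i\le x^\star_i$ for every $i$ together with $\sum_i f(x^\star)_i=\sum_i x^\star_i$ forces coordinatewise equality, closes the argument correctly. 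What your approach buys is elementariness---no homology, no smooth approximation, no Stokes---at the cost of the combinatorial bookkeeping in the Sperner induction; what the cited topological approach buys is brevity once the machinery of algebraic topology is assumed, which is evidently the trade-off the authors chose by citing rather than proving. For the purposes of this paper, where Brouwer's theorem is used only once (to produce a zero of the vector field $Q$ in Lemma \ref{lem.qy}), either justification suffices, and your version would make the appendix self-contained at the expense of considerable added length.
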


Now, we prove the lemma.
\begin{proof}
Suppose, heading toward contradiction, that $Q$ does not vanish at any point in the ball $D=\{\|x\|\le \rho\}$. We define a map $F:D\to D$ by
\begin{equation}
F(x) = -\rho\frac{Q(x)}{\|Q(x)\|}.
\end{equation}

This is a continuous map (as $Q$ never vanishes), and the norm of $F(x)$ is always equal to $rho$, so $F(x)$ is indeed in $D$. Thus, we can apply Brouwer's fixed point theorem to $F$ and get a fixed point, called $y$.

We know that $y$ satisfies $F(y)=y$, i.e., $-\rho\frac{Q(y)}{\|Q(y)\|} = y$. On one hand, taking the norm of the last equation implies that $\|y\|=\rho$. On the other hand, rearranging it implies that $Q(y)=-\frac{\|Q(y)\|}{\rho}y=\lambda y$ where $\lambda$ is some negative scalar (as $Q(y)\neq 0$). Thus, we found a point $y$ of norm $\rho$ such that $\langle Q(y),y\rangle = \lambda\|y\|^2{<}0$ 
for some $\lambda < 0$, which contradicts our assumption. Thus $Q$ has a zero inside the ball $D=\{||x||<\rho\}$.
\end{proof}

We are now ready to prove Theorem \ref{thm:ConvexGradient}.
\begin{proof}
First, because $u$ is constant, we can absorb the constant term $Bu$ inside the gradient $\nabla\psi(x)$ by adding the linear term $(Bu)^Tx$ to $\psi(x)$. This does not change the fact that $\psi$ is strictly convex, nor the fact that it ascends faster than any linear function. Thus we may assume that $Bu=0$ for the remainder of the proof.

Now, we define the vector field $Q(x)=\nabla\psi(x)-Jx$. Note that because $J$ is skew-symmetric, for all $x\in\mathbb{R}$,
\begin{equation}
\langle \nabla\psi(x) - Jx,x\rangle = \langle\nabla\psi(x),x\rangle.
\end{equation}
Thus, by the Lemma \ref{lem.pointsOutward}, there's some $\rho>0$ such that $\langle Q(x),x\rangle \ge 0$ for any vector $x$ satisfying $\|x\|\le \rho$, and by Lemma \ref{lem.qy} we can find some point $x_0\in\mathbb{R}$ such that $Q(x_0)=0$, or equivalently, $Jx_0 = \nabla\psi(x_0)$. We claim that any solution to the ODE converges to $x_0$.

Indeed, define $F(x) = \frac{1}{2} \|x-x_0\|^2$. Then $F$ is non-negative, and vanishes only at $x_0$, and furthermore,
\begin{align}
\dot{F} &=(x-x_0)^T\dot{x} = (x-x_0)^T(-\nabla\psi(x)+Jx)\nonumber \\
&=(x-x_0)^T(-\nabla\psi(x)+\nabla\psi(x_0)+J(x-x_0))\nonumber \\
&=-(x-x_0)^T(\nabla\psi(x)-\nabla\psi(x_0))\le 0,
\end{align}
where the last inequality is true because $\psi$ is convex and Theorem \ref{thm:Rockafellars}. Furthermore, $\dot{F}$ is negative if $x\neq x_0$ because $\psi$ is strictly convex and Theorem \ref{thm:Rockafellars}. \textcolor{black}{The uniqueness of $x_0$ follows from the fact that the flow globally asymptotically converges to $x_0$}. This completes the proof.
\end{proof}

\if(0)
\begin{IEEEbiography}
 {\bf Daniel~Zelazo} is an Assistant Professor of Aerospace Engineering at the Technion - Israel Institute of Technology. He received his BSc. (99) and M.Eng (01) degrees in Electrical Engineering from the Massachusetts Institute of Technology. In 2009, he completed his Ph.D. from the University of Washington in Aeronautics and Astronautics. From 2010-2012 he served as a post-doctoral research associate and lecturer at the Institute for Systems Theory \& Automatic Control in the University of Stuttgart. His research interests include topics related to multi-agent systems, optimization, and graph theory.
 \end{IEEEbiography}

\begin{IEEEbiography}
 {\bf Miel~Sharf} is a Ph.D. student in the Aerospace Engineering department at the Technion - Israel Institute of Technology. He received his B.Sc. (13) and M.Sc. (16) degrees in Mathematics from the Technion - Israel Institute of Technology. His research interests include topics related to multi-agent systems.
 \end{IEEEbiography}
\fi

\end{document}